\newtheorem{lem}{Lemma}[section]
\newtheorem{theo}[lem]{Theorem}
\newtheorem{coro}[lem]{Corollary}
\newtheorem{propo}[lem]{Proposition}
\newtheorem{rema}[lem]{Remark}
\newtheorem{defi}[lem]{Definition}
\newenvironment{lemma}
{\begin{lem}\sl } {\end{lem}}
\newenvironment{theorem}
{\begin{theo}\sl } {\end{theo}}
\newenvironment{corollary}
{\begin{coro}\sl } {\end{coro}}
\newenvironment{proposition}
{\begin{propo}\sl } {\end{propo}}
\newenvironment{proof}{\paragraph*{Proof}}
{\par}
\newcommand\qed{\hfill$\square$}
\newcommand\CC{{\mathcal C}}
\newcommand\OO{{\mathcal O}}
\newcommand\calR{{\mathcal R}}
\newcommand\gal{{\mathrm{Gal}}}
\newcommand\ord{{\mathrm{ord}}}
\newcommand\GL{{\mathrm{GL}}}
\newcommand\SL{{\mathrm{SL}}}
\newcommand\Mu{{\mathrm M}}
\newcommand\eps\varepsilon
\newcommand\ph\varphi
\newcommand\C{{\mathbb C}}
\newcommand\F{{\mathbb F}}
\newcommand\Q{{\mathbb Q}}
\newcommand\PPP{{\mathbb P}}
\newcommand\Z{{\mathbb Z}}
\newcommand\height{{\mathrm h}}
\newcommand\im{{\mathrm {Im}\,}}
\newcommand\sign{{\mathrm {sign}\,}}
\newcommand\HH{{\mathcal H}}
\newcommand\bfa{{\mathbf a}}
\newcommand\bfb{{\mathbf b}}
\newcommand\bfA{{\mathbf A}}
\newcommand\tilbfA{{\widetilde \bfA}}
\newcommand\tilD{{\widetilde D}}
\newcommand\topbot[2]{{\genfrac{}{}{0pt}{}{{#1}}{{#2}}}}
\newcommand\spl{{\mathrm{split}}}
\title{Bounds for Integral $j$-Invariants and Cartan Structures on Elliptic Curves}
\author{Yuri Bilu, Pierre Parent (Universit\'e de Bordeaux~I)}
\begin{document}

\maketitle

\begin{abstract}
We bound  the $j$-invariant of integral points 
on a modular curve in terms of the congruence group defining the curve. 
We apply this to prove that the modular curve $X_{\mathrm{
split}} (p^3 )$ has no non-trivial rational point if~$p$  is a sufficiently large prime number. Assuming the GRH, one can replace~$p^3$ by~$p^2$.

\vspace{2cm}

AMS 2000 Mathematics Subject Classification  11G18 (primary), 11G05, 11G16 (secondary). 
\end{abstract}

\section{Introduction}

Let ${N\ge 2}$ be an integer and $X(N)$ the principal modular curve of level~$N$. Further, 
let~$G$ a subgroup of $\GL_{2} (\Z /N\Z )$  and let $X_G$ be the corresponding modular 
curve. This curve is defined over $\Q  (\zeta_{N})^{\det (G)}$, so in particular it is 
defined over $\Q$ if ${\det (G)=(\Z /N\Z )^\times}$. (Through all this paper, we say that 
an algebraic curve is  \textsl{defined} over a field if it has a geometrically integral model over this 
field.) As usual, we denote by~$Y_G$ the finite 
part of~$X_G$ (that is,~$X_G$ deprived  of the cusps). If~$X_G$ is defined over a 
number field~$K$, the curve~$X_{G}$ has a natural (modular) model over ${\OO=\OO_K}$ 
that we still denote by~$X_{G}$. The cusps define a closed subscheme of~$X_{G}$ 
over~$\OO$, and we define the relative curve~$Y_{G}$ over~$\OO$ as~$X_{G}$ deprived 
of the cusps. If~$S$ is a finite set of places of~$K$ containing the infinite places, then the 
set of $S$-integral points $Y_G(\OO_S)$ consists of those ${P\in Y_G(K)}$ for which 
${j(P)\in \OO_S}$, where~$j$ is, as usual, the modular invariant and ${\OO_S=\OO_{K,S}}$
is the ring of~$S$-integers. 

In its simplest form, the first principal result of this article gives an explicit upper bound for
${j(P)\in \Z}$ under certain Galois condition on the cusps. More precisely, we prove the 
following.

\begin{theorem}
\label{th1}
Assume that~$X_G$ is defined over~$\Q$, and assume that the absolute Galois group 
$\gal(\bar\Q/\Q)$ acts non-transitively on the cusps of~$X_G$. Then for any ${P\in Y_G
(\Z)}$ we have
\begin{equation}
\label{eth1}
\log|j(P)|\le 30|G|N^2\log N. 
\end{equation}
\end{theorem}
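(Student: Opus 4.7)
The plan is to apply Runge's method, exploiting the non-transitive Galois action on the cusps to produce a Galois-invariant modular unit on $Y_G$ with segregated zeros and poles. Usually Runge's method requires several distinct archimedean places of the base field, but here the non-transitivity hypothesis plays the analogous role by splitting the cuspidal divisor into two Galois-stable pieces.

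First I would partition the cusps of $X_G$ into two non-empty disjoint Galois-stable subsets $\gerC_0$ and $\gerC_\infty$. Next I would produce a modular unit $U$ on $Y_G$, defined over $\Q$, with ${\mathrm{div}(U)=m_0\sum_{c\in\gerC_0}c-m_\infty\sum_{c\in\gerC_\infty}c}$ for positive integers $m_0,m_\infty$ chosen so that this divisor has degree zero. Such a $U$ exists either by Manin--Drinfeld (the cuspidal divisor class group in the Jacobian is torsion) or more constructively as a Galois-invariant product of Siegel units pulled back from $X(N)$ and descended to $Y_G$. The key quantitative input is an effective upper bound on $m_0,m_\infty$ and on the Fourier coefficients of $U$ at each cusp, both of which should be controllable in terms of $|G|$ and~$N$.

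For $P\in Y_G(\Z)$ with $|j(P)|$ large, the Runge comparison now proceeds as follows. At the archimedean place, lift $P$ to $\tau\in\HH$ with $\im\tau$ large; then $P$ lies near some cusp $c$, and the $q$-expansion of $U$ at $c$ gives $\log|U(P)|_\infty$ as essentially $\pm$ a positive multiple of $\log|j(P)|$, with sign depending on whether $c\in\gerC_0$ or $c\in\gerC_\infty$. At finite primes, since $U$ is a modular unit on $Y_G$, the value $U(P)\in\Q^\times$ is an $S$-unit for ${S=\{p:p\mid N\}\cup\{\infty\}}$; for $p\mid N$, one bounds $|U(P)|_p$ using the $q$-expansion principle applied to the formal completion of the integral model along the cusps in characteristic $p$. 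The product formula ${\prod_v|U(P)|_v=1}$ then converts these local estimates into an upper bound of the shape required by~\eqref{eth1}.

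The main obstacle is quantitative rather than conceptual: tracking all the constants explicitly. The factor $|G|$ enters through the multiplicities $m_0,m_\infty$, since $|G|$ controls the degree of the covering $X(N)\to X_G$ and hence the relevant Manin--Drinfeld torsion orders; the factor $N^2$ reflects the widths of cusps and the exponents appearing in the Siegel-unit $q$-expansions; and the factor $\log N$ comes from the contribution of the places $p\mid N$ via the integral model. Arriving at the clean overall constant~$30$ is essentially bookkeeping once the structural Runge argument has been set up.
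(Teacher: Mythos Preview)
Your high-level strategy---Runge's method with a $\Q$-rational modular unit built from Siegel units, then the product formula---is exactly the paper's approach. But the proposal as written is a plan rather than a proof, and a few of your structural choices diverge from what actually works.

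\medskip

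\textbf{The divisor shape is too rigid.} You ask for a unit with divisor exactly $m_0\sum_{c\in\gerC_0}c - m_\infty\sum_{c\in\gerC_\infty}c$. A $\Q$-rational unit has constant order on each individual Galois orbit, but your $\gerC_0$ and $\gerC_\infty$ may each be unions of several orbits, and there is no reason the orders should agree across different orbits within the same block. The paper avoids this: since there is only one archimedean place, the point $P$ has a single ``nearest cusp'' $c$, and one only needs a $\Q$-rational unit $w$ with $\ord_{c'}w>0$ for every $c'$ in the Galois orbit of $c$. Such a $w$ is obtained directly as $w_\bfa^{\pm1}$ for a suitable $\bfa$, where $w_\bfa=\prod_{\sigma\in G'}u_{\bfa\sigma}$; no Manin--Drinfeld is invoked.

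\medskip

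\textbf{Manin--Drinfeld is a dead end here.} It gives existence of a unit with prescribed cuspidal divisor up to torsion, but with no control on the torsion order in terms of $|G|$ and $N$. That route cannot yield~\eqref{eth1}. The paper's quantitative substitute is the explicit bound $|\ord_c u_\bfa|\le N^2$ on $X(N)$, whence $|\ord_c w_\bfa|\le |G'|N^2$ on $X_G$; this is where the factor $|G|N^2$ in~\eqref{eth1} actually comes from (not from Manin--Drinfeld torsion orders).

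\medskip

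\textbf{``Essentially bookkeeping'' undersells the work.} The explicit constant is the content of the theorem. The paper needs: sharp $q$-expansion estimates for $j$ and for Siegel functions near every cusp (archimedean and non-archimedean), the integrality of $g_\bfa$ and of $(1-\zeta_N)g_\bfa^{-1}$ over $\Z[j]$ (this is what makes $w(P)$ nearly a unit at finite places, replacing your $S$-unit claim), and a careful splitting $\Xi_1+\Xi_2+\Xi_3=0$ of the product formula. In the $\Q$, $S=\{\infty\}$ situation one gets $\Xi_1\le -N^{-1}\log|j(P)|+15|G'|N\log N$, $\Xi_2=0$, $\Xi_3\le 9|G'|N$, and~\eqref{eth1} follows. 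The $\log N$ you attribute to primes $p\mid N$ in fact arises, in this $S=\{\infty\}$ case, from the archimedean estimate $|\log|\beta(c)||\le 12|G'|N\log N$ for the leading constant $\beta(c)$ in the $q$-expansion of $w$ at $c$; the finite places contribute only through the height of the integrality factor $\lambda$.
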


This result was announced in~\cite{BP08}. Because of an inaccuracy in the proof given 
in~\cite{BP08}, the $\log N$ factor is missing therein (which, however, does not affect 
the arithmetical applications of this theorem). 

Actually, we obtain two versions of Theorem~\ref{th1}. One (Theorem~\ref{tbo} below) 
is quite general, applies to any number field and a ring of~$S$-integers in it, but the bound is 
slightly weaker. The other (see Section~\ref{sspec}) is less general than 
Theorem~\ref{th1}, and applies only to certain particular groups~$G$ (the normalizers of 
split tori), but the bound is sharper. 

To state Theorem~\ref{tbo}, we need to  introduce some 
notation. We denote by ${\height(\cdot)}$ the usual absolute logarithmic height (see Subsection~\ref{ssnota}).  For ${P\in X_G(\bar \Q)}$ we shall write 
${\height(P)=\height\bigl(j(P)\bigr)}$.
For a number field~$K$ we denote by ${\CC=\CC(G)}$ the set of cusps of~$X_G$, and by $\CC(G,K)$ the 
set of $\gal(\bar K/K)$-orbits of~$\CC$.

\begin{theorem}
\label{tbo}
Let~$K$ be a number field and~$S$ a finite set of places of~$K$ (including all the 
infinite places). Let~$G$ be a subgroup of $\GL_2(\Z/N\Z)$ such 
that~$X_G$ is defined over~$K$.  Assume that 
${|\CC(G,K)|> |S|}$
(the ``Runge condition''). Then for any ${P\in Y_G(\OO_S)}$ we have 
\begin{equation}
\label{etbo}
\height(P) \le s^{s/2+1}\left(|G|N^2\right)^sN(\calR+30), 
\end{equation}
where ${s=|S|}$ and
\begin{equation}
\label{ecalr}
\calR=\calR(N,S) =\sum_{\topbot{p\mid N}{v\mid p\ \text{for some}\ v\in S}}
\frac{\log p}{p-1},
\end{equation}
the sum being over all the prime divisors of~$N$ below the (finite) places from~$S$ (in particular ${\calR=0}$ if~$S$ consists only of infinite places). 
\end{theorem}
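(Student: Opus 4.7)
The plan is to apply the Runge method, with the cusps of~$X_G$ serving as the divisor at infinity. For each ${v\in S}$ I identify a distinguished cusp ${c_v\in\CC}$ to which $P$ is $v$-adically closest---equivalently, the cusp to which $P$ specializes in the reduction modulo~$v$, when that specialization is cuspidal; otherwise the contribution of~$v$ to $\height(P)$ is already under control. The set ${\{c_v:v\in S\}}$ has at most $s$ elements and therefore meets at most $s$ of the ${|\CC(G,K)|>s}$ Galois orbits of cusps, so there exists an orbit ${\CC^*\in\CC(G,K)}$ disjoint from every~$c_v$: these will play the role of \emph{Runge cusps}.

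Next I would construct a nonzero $K$-rational modular unit ${u\in K(X_G)^\times}$ whose divisor is supported on the cusps, has positive coefficients on $\CC^*$, and non-positive coefficients on the orbits containing the~$c_v$. The building blocks are the Siegel units on~$X(N)$, whose divisors and $q$-expansions at every cusp are classical and explicit. Descending from~$X(N)$ to~$X_G$ by averaging over~$G$ gives modular units on~$X_G$, and prescribing the cuspidal divisor on an orbit basis amounts to solving an integer linear system indexed by the Galois orbits of cusps, with entries bounded by~${|G|N^2}$ (coming from the indices of Siegel units and from the ramification in ${X_G\to X(1)}$). Extracting an appropriate $s$-dimensional subsystem and applying Cramer's rule with Hadamard's bound produces integer solutions of size~${s^{s/2+1}(|G|N^2)^s}$, yielding the combinatorial factor in~\eqref{etbo}. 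The height of~$u$ itself, bounded by classical estimates on Siegel units (via cyclotomic units and Jacobi sums), contributes the factor~${N(\calR+30)}$; here $\calR$ is exactly what is needed to track the denominators introduced in the $q$-expansions at those finite places of~$S$ lying above primes dividing~$N$.

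The desired bound for $\height(P)$ is then obtained by evaluating $u$ at~$P$. The Runge property gives a lower bound ${\log|u(P)|_v\ge -C_v}$ for each ${v\in S}$, with $C_v$ depending only on the height of~$u$, while integrality of~$P$ and the modular-unit property force ${\log|u(P)|_v\le 0}$ for ${v\notin S}$. On the other hand, since $u$ vanishes at $c_v$ with known multiplicity and $j$ has a pole at $c_v$ of order ${e(c_v)\le N}$, the local expansion at $c_v$ lets one convert $\log|j(P)|_v$ into~$-\log|u(P)|_v$ up to a factor of~$N$ and the height of~$u$. Summing over ${v\in S}$ and invoking the product formula yields~\eqref{etbo}. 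The main technical obstacle is precisely this construction: one must simultaneously achieve $K$-rational descent of~$u$, sharp control of its $q$-expansion denominators (reflected in $\calR$), and integer coefficients of size at most ${s^{s/2+1}(|G|N^2)^s}$, all while keeping~$u$ a modular unit on~$X_G$ rather than merely on~$X(N)$.
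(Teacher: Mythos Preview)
Your overall strategy---Runge's method with Siegel units descended to $X_G$, Cramer/Hadamard for the linear algebra giving the factor $s^{s/2+1}(|G|N^2)^s$, and the product formula to finish---is exactly the paper's approach, and your accounting for where the factor $N(\calR+30)$ arises is on target.

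The one genuine problem is that the role of the Runge unit is reversed, and this makes the argument internally inconsistent. You first require $u$ to have \emph{positive} order on the avoided orbit $\CC^*$ and \emph{non-positive} order on the orbits containing the~$c_v$; with that choice $u$ does not vanish at any~$c_v$, your lower bound $\log|u(P)|_v\ge -C_v$ for $v\in S$ is then plausible, but no term proportional to $\log|j(P)|_v$ appears anywhere and the product formula says nothing about~$\height(P)$. Your final paragraph silently switches to ``$u$ vanishes at $c_v$ with known multiplicity,'' which is the opposite hypothesis and contradicts the construction you just described. The paper takes this second convention throughout: one lets $\Sigma$ be the set of Galois orbits containing the~$c_v$ (a proper subset of $\CC(G,K)$, since $|\Sigma|\le s$) and builds $w$ with $\ord_c w>0$ for every cusp~$c$ whose orbit lies in~$\Sigma$. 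Then for each $v$ with $|j(P)|_v$ large one obtains $\log|w(P)|_v\le -N^{-1}\log|j(P)|_v + O(1)$; the remaining places contribute bounded terms (for $v\notin S$ one uses that $\lambda w$, not $w$ itself, is integral over $\Z[j]$ for an explicit cyclotomic factor~$\lambda$, which is where your claim ``$\log|u(P)|_v\le 0$ for $v\notin S$'' needs a small correction), and the product formula yields the bound. Your last paragraph is essentially this computation once you make $u$ vanish on~$\Sigma$ rather than on~$\CC^*$.
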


While this theorem applies in the set-up of Theorem~\ref{th1}, it implies a slightly weaker 
result, with ${30|G|N^3}$  on the right. Mention also that the constant~$30$ is not best possible for the method and can be easily reduced, at least for 
large~$N$.

These theorems are proved  in Sections~\ref{sproof} and~\ref{sth1} by a variation of the 
method of Runge, after some preparation in Sections~\ref{sest},~\ref{snecu} 
and~\ref{smuni}.  In Section~\ref{sspec} we obtain an especially 
sharp version of Theorem~\ref{th1} for the case of split tori. For a general discussion of 
Runge's method see~\cite{Bo83,Le08}.

In Section~\ref{sspli}  we apply these results to the arithmetic of elliptic curves. 
We are motivated by a question of Serre, who proved~\cite{Se72} that for any elliptic 
curve~$E$ without complex multiplication (CM in the sequel), there exists  a constant 
$p_0(E)$ such that for every prime ${p>p_0(E)}$ the natural Galois representation 
${\rho_{E,p}:\gal
(\bar{\Q} /\Q )\to \GL (E[p])\cong\GL_2(\F_p)}$ 
is surjective. Masser and W\"ustholz~\cite{MW93} gave an effective version of Serre's result; see also the more recent work of Cojocaru and Hall \cite{Co05,CH05}.

Serre asks whether~$p_0$ can be made independent of~$E$:

\begin{quotation}{\sl
\noindent
does there exist an absolute constant~$p_0$ such that for any non-CM elliptic curve~$E$ 
over~$\Q$ and any prime ${p>p_0}$ the  Galois representation~$\rho_{E,p}$ is 
surjective? }
\end{quotation}
The general guess is that $p_{0}=37$ would probably do.  

We obtain several results on Serre's question. One knows that, for a positive answer, it is
sufficient to bound the primes~$p$ such that a non-CM curve may have a Galois structure 
included in the normalizer of a (split or nonsplit) Cartan subgroup of $\GL_2(\F_p)$. 
Equivalently, one would like to prove that, for large~$p$, the only rational points of the 
modular curves $X_{\mathrm{split}} (p)$ and $X_{\mathrm{nonsplit}} (p)$ are the cusps 
and CM points, in which case we will say that the rational points are \textsl{trivial} (for 
the precise definition of these curves see Section~\ref{sspec}). In \cite{Pa05,Re08} it 
was proved, by very different techniques, that $X_{\mathrm{split}}(p)(\Q)$ is trivial for a 
(large) positive density of primes; but the methods of loc. cit. have failed to prevent a 
complementary set of primes from escaping them. Here we consider 
Cartan structures modulo some higher power of a prime, and we prove the following.

\begin{theorem}
\label{tsp}
For large enough prime $p$, every point in $X_{\mathrm{split}} (p^3 )(\Q )$ is either 
a CM-point or a cusp. 
Assuming the Generalized Riemann Hypothesis for the zeta functions of number fields (GRH 
in the sequel), the same holds true for $X_{\mathrm{split}} (p^2 )(\Q )$.
\end{theorem}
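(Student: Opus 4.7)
The proof plan combines the Runge-type upper bound for heights of integral points on modular curves, established in the earlier sections, with a lower bound coming from the arithmetic of the associated elliptic curve. Suppose for contradiction that, for arbitrarily large primes $p$, there exists a non-cuspidal, non-CM rational point $P\in X_{\mathrm{split}}(p^n)(\Q)$, where $n=3$ unconditionally and $n=2$ under GRH. Let $E/\Q$ be a corresponding elliptic curve; then the image of $\rho_{E,p^n}$ lies in the normalizer $G$ of a split Cartan subgroup of $\GL_2(\Z/p^n\Z)$. The aim is to derive incompatible upper and lower bounds on $\height(P)$.

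First, I would show that $j(E)\in\Z$. A nontrivial unipotent matrix in $\GL_2(\Z/p^n\Z)$ is neither diagonal nor anti-diagonal, so it cannot lie in $G$. Hence if $E$ had multiplicative reduction at a prime $\ell$, the tame inertia at $\ell$ would act unipotently on $E[p^n]$; for this to be compatible with $G$, the action must be trivial modulo $p^n$, which forces $p^n\mid v_\ell(j(E))$. A denominator of size $\ell^{p^n}$ is ruled out for $p$ large by applying Theorem~\ref{tbo} with $S$ enlarged to contain $\ell$ and comparing with the trivial lower estimate $\height(P)\ge p^n\log \ell$.

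Next, I would apply the sharpened Runge bound from Section~\ref{sspec}, tailored to the split Cartan case (or, more crudely, Theorem~\ref{tbo}), with $K=\Q$ and $S=\{\infty\}$. The Runge condition $|\CC(G,\Q)|>1$ must be verified, and this is precisely where $n\ge 2$ becomes crucial: the cusps of $X_{\mathrm{split}}(p^n)$ split into several Galois orbits as soon as $n\ge 2$, whereas $X_{\mathrm{split}}(p)$ has a single $\Q$-orbit of cusps. One thus obtains an explicit bound $\height(P)\le B(p,n)$ which is polynomial in $p$.

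For the lower bound, I exploit the isogeny structure provided by the split Cartan. Its two eigenlines are permuted by $\gal(\bar\Q/\Q)$ through a quadratic character, so that over a (possibly quadratic) extension $K/\Q$ one has a pair of conjugate cyclic $p^n$-isogenies $E_K\to E'_K$; their composition with a Galois conjugate furnishes a cyclic $p^{2n}$-isogeny on $E_K$ and hence a rational point on $X_0^+(p^{2n})$. Applying effective Chebotarev to the extension $\Q(E[p^n])/\Q$, whose discriminant is bounded polynomially in $p^n$, yields a small prime $\ell$ whose Frobenius lies in the anti-diagonal coset of $G$, so that $p^n\mid a_\ell(E)$. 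For $\ell<p^{2n}/4$, the Hasse bound forces $a_\ell(E)=0$, i.e. supersingular reduction at $\ell$. Combining these supersingularity constraints with the cyclic $p^{2n}$-isogeny via formal-immersion arguments on $X_0^+(p^{2n})$ in the spirit of Mazur--Kamienny--Parent produces a lower bound on $\height(P)$ that contradicts $B(p,n)$ for $p$ large enough. The main obstacle is the Chebotarev step: unconditionally, the Lagarias--Odlyzko bound $\ell\le p^{Cn}$ with a large constant $C$ only allows $\ell<p^{2n}/4$ when $n\ge 3$, whereas under GRH the much sharper bound $\ell=O((\log p^n)^2)$ makes $n=2$ sufficient.
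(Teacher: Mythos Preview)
Your proposal has the right overall architecture---an upper bound via Runge and a lower bound coming from the arithmetic of~$E$---but several of the individual steps differ from the paper and contain genuine gaps. First, your assertion that $X_{\mathrm{split}}(p)$ has a single $\Q$-orbit of cusps is incorrect: as recorded in Section~\ref{sspec}, it has exactly two (the rational cusp $c_\infty$ and one orbit of size $(p-1)/2$). The paper therefore applies Runge already at level~$p$, via Theorem~\ref{tspto}, obtaining $\log|j_E|\le 23p\log p$; working at level~$p^n$ as you suggest would give a much weaker bound. Second, your integrality argument does not close: enlarging~$S$ to $\{\infty,\ell\}$ in Theorem~\ref{tbo} yields an upper bound of order roughly $(|G|N^2)^2N$, i.e.\ a large power of $p^n$, while your lower bound is only $p^n\log\ell$, so there is no contradiction. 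The paper instead quotes the theorem of Momose and Merel to obtain $j_E\in\Z$ directly.

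The main gap is in the unconditional lower bound. The paper does \emph{not} use Chebotarev here; it uses the explicit isogeny estimate of Masser--W\"ustholz--Pellarin (Proposition~\ref{ppel}, Corollary~\ref{cpel}): the split Cartan structure gives~$E$ a cyclic $p^n$-isogeny over a quadratic field, whence $p^n\le\kappa\bigl(1+\height(j_E)\bigr)^2$; combined with $\height(j_E)\le 23p\log p$ this yields $p^n\le c(p\log p)^2$, so $n\le 2$. Your alternative via unconditional effective Chebotarev is problematic because the Lagarias--Odlyzko bound is polynomial in the discriminant of $\Q(E[p^n])/\Q$, which depends on the conductor~$N_E$ and is not a priori polynomial in~$p^n$; moreover, formal-immersion arguments on $X_0^+(p^{2n})$ produce non-existence statements (point equals cusp) under a rank-zero hypothesis on a Jacobian quotient, not height lower bounds, and no such quotient is known to be available here. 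Under GRH the paper does use effective Chebotarev (the Halberstadt--Kraus argument, Proposition~\ref{pubo}), but applied to distinguish~$E$ from its quadratic twist, producing a prime~$\ell$ with $a_\ell\ne 0$ and $\chi(\ell)=-1$; then $p^n\mid a_\ell$ and Hasse give $p^n\le 2\sqrt\ell$ directly---no supersingularity or formal immersion is needed. Note also that to bound~$\ell$ one must first bound~$N_E$, which the paper achieves via Lemmas~\ref{ltwist} and~\ref{lglev} together with the Runge bound on~$j_E$; your claim that the relevant discriminant is ``bounded polynomially in~$p^n$'' skips this step.
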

Equivalently, for large
 enough~$p$ and for any non-CM elliptic curve~$E$ defined over~$\Q$, the image of the 
 Galois representation $\rho_{E,p^3}$ is not contained in the normalizer of a split Cartan 
 subgroup of $\GL_2(\Z/p^3\Z)$ (and $p^3$ can be replaced by~$p^2$ assuming~GRH).

The second part, where one assumes GRH, was sketched in~\cite{BP08}, with 
level $p^5$. Here we manage to reduce to level $p^2$ thanks to the refined bound from 
Section~\ref{sspec}, and we prove the first, unconditional assertion of Theorem~\ref{tsp}
by applying the isogeny estimate of Masser and W\"ustholz~\cite{MW90}, made explicit 
by Pellarin~\cite{Pe01}.

Call a prime number~$p$ \textsl{deficient} for an elliptic curve~$E$ if $\rho_{E,p}$ is 
not surjective; we call~$p$ a \textsl{(non-)split Cartan deficient prime} if the image of 
$\rho_{E,p}$ is contained in the normalizer of a (non) split Cartan subgroup. Following a suggestion of L.~Merel and J.~Oesterl\'e, we  prove that the split 
Cartan deficiencies are bounded, with at most~$2$ exceptions. 

\begin{theorem}
\label{ttwo}
There exists an absolute effective constant~$p_0$ such that for any non-CM elliptic curve 
$E/\Q$, all but~$2$ split Cartan deficient primes do not exceed~$p_0$.
\end{theorem}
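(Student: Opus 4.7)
The plan, following a suggestion of Merel and Oesterl\'e, is to combine the isogeny estimate of Masser--W\"ustholz~\cite{MW90} (in Pellarin's explicit form~\cite{Pe01}) with the Runge-type bounds of this paper. I would argue by contradiction: suppose $E/\Q$ is non-CM with three split Cartan deficient primes $p_1<p_2<p_3$ all exceeding an absolute constant $p_0$. For each~$i$, Mazur's theorem on rational cyclic $p$-isogenies forces (for $p_i$ large) the image of $\rho_{E,p_i}$ not to lie entirely in the split Cartan itself; since it does lie in the normalizer, the quotient map yields a nontrivial quadratic character $\eps_i\colon \gal(\bar\Q/\Q)\to\{\pm1\}$ with fixed field a quadratic extension $K_i/\Q$.

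Setting $N=p_1p_2p_3$ and $L=K_1K_2K_3$ (Galois over $\Q$ of degree dividing $8$), the Chinese remainder theorem shows that $\rho_{E,N}|_{\gal(\bar\Q/L)}$ lands in the split Cartan $C_s\subset\GL_2(\Z/N\Z)$, so that $E_L$ admits an $L$-rational cyclic $N$-isogeny $\phi\colon E_L\to E'$. Since $E$ is non-CM, $\Hom_L(E_L,E')\cong\Z$; the cyclic isogeny~$\phi$, having squarefree degree, must be a generator, so $N=\deg\phi$ equals the minimal degree of any $L$-isogeny between $E_L$ and~$E'$. Pellarin's effective bound (for $[L:\Q]\le 8$) then yields
\[
N \;\le\; C_1 \,\max(1,\height(j(E)))^{\kappa_1}
\]
for absolute constants $C_1,\kappa_1$.

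On the other hand, $E$ defines a $\Q$-rational point on $X_{\mathrm{split}}(N)$, and the sharpened Runge-type bound of Section~\ref{sspec} (specialized to normalizers of split Cartan) should produce a matching upper bound $\height(j(E))\le C_2 N^\alpha$. Combined with the previous inequality this gives $N\le C_3 N^{\alpha\kappa_1}$, contradicting $N>p_0^3$ for $p_0$ large, \emph{provided} the numerical relation $\alpha\kappa_1<1$ holds.

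The main obstacle, I expect, is precisely this numerical balance: the general Theorem~\ref{tbo} yields $\alpha$ far too large against Pellarin's~$\kappa_1$, so one must invoke the sharper estimate of Section~\ref{sspec} together with the most efficient explicit form of Pellarin's theorem. A secondary technical point is to verify the Runge condition on $X_{\mathrm{split}}(N)$ (enough Galois orbits of cusps) and to control the set of places where $j(E)$ may fail to be integral; for the latter, the cyclic $N$-isogeny over~$L$ permits another application of Masser--W\"ustholz to constrain the primes of bad reduction of~$E$.
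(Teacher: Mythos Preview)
Your overall architecture—Pellarin's explicit Masser--W\"ustholz bound applied to a cyclic $p_1p_2p_3$-isogeny over a field of degree~$\le 8$, combined with a Runge-type height bound—is exactly the paper's. But you have manufactured the numerical obstacle you worry about by applying the Runge bound at level $N=p_1p_2p_3$ rather than at level~$p_1$.

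The paper's argument runs as follows. Since~$E$ is split Cartan deficient at~$p_1$, it already defines a point on $X_{\mathrm{split}}(p_1)(\Q)$. The Momose--Merel theorem (not a second application of Masser--W\"ustholz, as you suggest) gives $j_E\in\Z$ once $p_1>13$. Then Theorem~\ref{tspto}, which is proved only at \emph{prime} level and for integral points, yields
\[
\height(j_E)=\log|j_E|\le 23\,p_1\log p_1.
\]
Meanwhile, exactly as you outline, over $L=K_1K_2K_3$ (of degree~$\le8$) the curve~$E$ admits a cyclic isogeny of degree $N=p_1p_2p_3$, so Corollary~\ref{cpel} (Pellarin with $d\le8$, hence exponent~$2$) gives
\[
p_1^3\;\le\; N \;\le\; c\bigl(1+\height(j_E)\bigr)^2 \;\le\; c'(p_1\log p_1)^2,
\]
forcing $p_1\le c'(\log p_1)^2$ and hence bounding~$p_1$ absolutely.

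The point is the asymmetry: Pellarin is applied to the \emph{product} $p_1p_2p_3\ge p_1^3$, while Runge is applied at the \emph{smallest} prime~$p_1$ alone. Your route through $X_{\mathrm{split}}(N)$ is both harder (Section~\ref{sspec} treats only prime level, so you would have to fall back on the weaker Theorem~\ref{tbo}) and self-defeating: even an optimistic $\height(j_E)\lesssim N\log N$ combined with Pellarin's exponent~$2$ gives only $N\lesssim N^2$, which is vacuous. There is no delicate ``$\alpha\kappa_1<1$'' condition to verify; the inequality $3>2$ is all that is used. Likewise, there is no need to check a Runge condition at level~$N$ or to constrain bad primes via isogeny estimates: integrality of~$j_E$ comes straight from Momose--Merel, and the two Galois orbits of cusps on $X_{\mathrm{split}}(p_1)$ are already known.
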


\paragraph{Acknowledgments}
We thank Daniel Bertrand, Henri Cohen, Lo\"ic Merel, Joseph Oesterl\'e, Vinayak Vatsal and Yuri Zarhin for stimulating discussions and useful suggestions. We specially acknowledge that the idea of Theorem~\ref{ttwo} came out in a conversation with Merel and Oesterl\'e.

\subsection{Notation, conventions}
\label{ssnota}
Everywhere in this article  $\log$ and~$\arg$ stand for the principal branches of the 
complex logarithm and argument functions; that is, for any ${z\in \C^\times}$ we have
${-\pi<\im\log z=\arg z\le \pi}$.
We shall systematically use, often without special reference, the estimates of the kind
\begin{equation}
\label{eschwa}
\begin{aligned}
|\log (1+z)| &\le \frac{|\log(1-r)|}r|z|, \qquad
\left|e^z-1\right|\le \frac{e^r-1}r|z|, \\
\left|(1+z)^A-1-Az\right|&\le \frac{\left|1+\eps r)^A-1-\eps Ar\right|}{r^2}|z|^2 \quad (\eps=\sign A), 
\end{aligned} 
\end{equation}
etc., for ${|z|\le r<1}$. They can be easily deduced from the Schwarz lemma.

Let~$\HH$ denote the upper half-plane of the complex plane:
${\HH=\{\tau\in \C : \im\tau>0\}}$. For ${\tau\in \HH}$ we put ${q_\tau=e^{2\pi i \tau}}$. 
We put ${\bar\HH=\HH\cup\Q\cup\{i\infty\}}$. If~$\Gamma$ is a pull-back of ${G\cap
\SL_2(\Z/N\Z)}$ to $\SL_2(\Z)$, then the set $X_G(\C)$ of complex points is analytically
isomorphic to the quotient  ${X_\Gamma=\bar\HH/\Gamma}$, supplied with the properly
defined topology and analytic structure \cite{La76,Sh71}.  

We denote by~$D$ the standard fundamental 
domain of $\SL_2(\Z)$ (the hyperbolic triangle with vertices $e^{\pi i/3}$, 
$e^{2\pi i/3}$ and $i\infty$, together with the geodesic segments ${[i,e^{2\pi i/3}]}$ 
and ${[e^{2\pi i/3},i\infty]}$).
Notice that for ${\tau\in D}$ we have ${|q_\tau|\le e^{-\pi\sqrt3}<0.005}$, 
which will be systematically used without special reference.

For ${\bfa=(a_1,a_2)\in \Q^2}$ we put 
${\ell_\bfa=B_2\bigl(a_1-\lfloor a_1\rfloor\bigr)}$
where ${B_2(T)=T^2-T+1/6}$ is the second Bernoulli polynomial. The quantity~$\ell_\bfa$ is $\Z^2$-periodic in~$\bfa$ and is thereby well-defined for ${\bfa\in (\Q/\Z)^2}$ as well: for such~$\bfa$ we have ${\ell_\bfa=B_2(\widetilde{a}_1)}$, where~$\widetilde{a}_1$ is the lifting of the first coordinate of~$\bfa$ to  the interval $[0,1)$. Obviously, 
${|\ell_\bfa|\le 1/12}$;
this will  also be often used without special reference. 

We fix, once and for all, an algebraic closure~$\bar\Q$ of~$\Q$, which is assumed to be a 
subfield of~$\C$. In particular, for every ${a\in\Q}$ we have the well defined root of unity 
${e(a)=e^{2\pi i a}\in \bar\Q}$.
Every number field used in this article is presumed to be  contained in the fixed~$\bar\Q$.
If~$K$ is such a number field and~$v$ is a valuation on~$K$, then we tacitly assume
than~$v$ is somehow extended to ${\bar\Q=\bar K}$; equivalently, we fix an algebraic
closure~$\bar K_v$ and an embedding ${\bar\Q\hookrightarrow\bar K_v}$. In
particular, the roots of unity $e(a)$ are well-defined elements of~$\bar K_v$. 

For a number field~$K$ we denote by~$M_K$ the set of all valuations (or places) of~$K$ normalized to extend the usual infinite and $p$-adic valuations of~$\Q$: ${|2|_v=2}$ if ${v\in M_K}$ is infinite, and ${|p|_v=p^{-1}}$ if~$v$ extends the $p$-adic valuation of~$\Q$. In the finite case we sometimes use the additive notation $v(\cdot)$, normalized to have ${v(p)=1}$. We denote by~$M_K^\infty$ and~$M_K^0$ the subsets of~$M_K$ consisting of the infinite (archimedean) and the finite (non-archimedean) valuations, respectively. 

Recall the definition of the absolute logarithmic height $\height(\cdot)$. For 
${\alpha \in \bar\Q}$ we pick a number field~$K$ containing~$\alpha$ and put 
${\height(\alpha) = [K:\Q]^{-1}\sum_{v\in M_K}[K_v:\Q_v]\log^+|\alpha|_v}$,
where the valuations on~$K$ are normalized to extend standard infinite and $p$-adic 
valuations on~$\Q$ and ${\log^+x=\log\max\{x,1\}}$.  The value of $\height(\alpha)$  is known to 
be independent on the particular choice of~$K$. As usual, we extend the definition of the 
height to ${\PPP^1(\bar \Q)=\bar\Q\cup\{\infty\}}$ by setting ${\height (\infty)=
0}$. If~$\alpha$ is a rational integer or an imaginary quadratic integer then ${\height
(\alpha)=\log|\alpha|}$.

\section{Estimates for  Modular Functions at Infinity}
\label{sest}

The results of this section must be known, but we did not find them in the available 
literature, so we state and prove them here. Most of the results of this section are stronger 
than what we actually need, but we prefer to state them in this sharp form for the sake of 
further applications.  

\subsection{Estimating the $j$-Function}

Recall that the modular 
$j$-invariant ${j:\HH\to\C}$ is defined by ${j(\tau)=(12c_2(\tau))^3/\Delta(\tau)}$, 
where 
$$
c_2(\tau)= \frac{(2\pi i)^4}{12}\left(1+240\sum_{n=1}^\infty
\frac{n^3q_\tau^n}{1-q^n}\right)
$$
(see, for instance, \cite[Section~4.2]{La73}) and
${\Delta(\tau) =(2\pi i)^{12}q\prod_{n=1}^\infty(1-q^n)^{24}}$ (here ${q=q_\tau=e^{2\pi i\tau}}$). Also,~$j$
has the familiar Fourier expansion
${j(\tau)= q^{-1}+ 744+196884q+\ldots}$.

\begin{proposition}
\label{pqj}
For ${\tau\in \HH}$ such that ${|q_\tau|\le 0.005}$ (and, in particular, for every
${\tau\in D}$) we have 
\begin{equation}
\label{ejq}
\left|j(\tau)-q_\tau^{-1}-744\right| \le 330000|q_\tau|.
\end{equation}
\end{proposition}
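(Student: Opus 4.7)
The plan is to work directly from the Fourier expansion $j(\tau)=q^{-1}+744+\sum_{n\ge 1}c_n q^n$, which converges on all of $\HH$ and has strictly positive integer coefficients. By the triangle inequality the claim reduces to the purely numerical statement $\sum_{n\ge 1}c_n|q|^{n-1}\le 330000$ for $|q|\le 0.005$. Since $c_1=196884$ already uses up more than half the budget, the tail estimates have to be reasonably sharp.

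To produce rigorous bounds I would use the factorisation built into the definitions of $c_2$ and $\Delta$, namely $j=E_4^3/(q\,\varphi^{24})$ with $E_4=1+240\sum_{n\ge 1}\sigma_3(n)q^n$ and $\varphi(q)=\prod_{n\ge 1}(1-q^n)$, and write
\[
j(\tau)-q^{-1}-744=\frac{E_4(\tau)^3-(1+744q)\,\varphi(q)^{24}}{q\,\varphi(q)^{24}}.
\]
The denominator is bounded below by the Euler product inequality $|\varphi(q)|\ge \prod_{n\ge 1}(1-|q|^n)\ge 1-|q|/(1-|q|)$, which for $|q|\le 0.005$ yields $|q\,\varphi^{24}|\ge 0.88\,|q|$. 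The numerator starts at order $q^2$: expanding, $E_4^3=1+720q+O(q^2)$ and $(1+744q)\varphi^{24}=(1+744q)(1-24q+O(q^2))=1+720q+O(q^2)$, so the $q^1$-coefficients cancel via the identity $720=-24+744$. I would bound the $O(q^2)$ pieces explicitly, using $\sigma_3(n)\le n^4$ and the Schwarz-type inequalities~\eqref{eschwa} applied to $(1+z)^3$ and to $(1+z)^{24}$; this gives a bound $K|q|^2$ on the numerator with a modest explicit $K$, and dividing by the denominator estimate produces the claim.

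The only genuine obstacle is numerical: ensuring that the final constant comes out at most $330000$ and not something much larger. The bounds on $|E_4-1-240q|$ and $|\varphi^{24}-1+24q|$ must be tight enough that, after cubing, expanding and dividing by the denominator bound, the total sits below the target. In practice this probably means computing $E_4^3$ and $\varphi^{24}$ symbolically up to order $q^6$ or so and treating only the residual tail by a geometric estimate. There is no conceptual difficulty, just careful bookkeeping.
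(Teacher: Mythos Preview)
Your plan is correct and is essentially the paper's own approach: both exploit the factorization $j=E_4^3/(q\,\varphi^{24})$ (the paper writes $E_4$ as $12c_2/(2\pi i)^4$ and $q\varphi^{24}$ as $(2\pi i)^{-12}\Delta$), bound each factor to first order in~$q$, and then combine. The only organisational difference is that the paper works with $\varphi^{-24}$ rather than $\varphi^{24}$: it first bounds $\bigl|\log\bigl(q(1-q)^{24}\varphi^{-24}\bigr)\bigr|\le 24.3|q|^2$ via the product, deduces $\bigl|q^{-1}\varphi^{-24}-1-24q\bigr|\le 342|q|^2$, and separately shows $|E_4-1-240q|\le 2204|q|^2$ using $n^3\le 3^n$ for $n\ge 3$ to get a geometric tail. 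Multiplying out $(1+240q)^3(1+24q)=1+744q+\cdots$ then gives the result ``after a tiresome, but straightforward calculation''. So your identification of the cancellation $720=-24+744$ and your intention to bound the second-order remainders is exactly right; the paper confirms that the bookkeeping does land below $330000$, and you need not go as far as order~$q^6$ --- second-order control of each factor suffices.
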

(Recall that~$D$ is the standard fundamental domain for $\SL_2(\Z)$.)

\begin{proof}
We  write ${q=q_\tau}$.  Using the estimate ${n^3\le 3^n}$ for ${n\ge 3}$, we find that 
for ${|q|<1/3}$ 
\begin{align*}
\left|\frac{12}{(2\pi i)^4}c_2(\tau)-1-240q\right| &\le 240\left(
\frac{|q|^2}{1-|q|}+ \sum_{n=2}^\infty\frac{n^3|q|^n}{1-|q|^n}\right)\\
&\le \frac{240}{1-|q|}\left(|q|^2+8|q|^2+\sum_{n=3}^\infty|3q|^n\right)\\
&=\frac{2160}{(1-|q|)(1-3|q|)}|q|^2,
\end{align*}
and for ${|q|\le 0.005}$ we obtain 
\begin{equation}
\label{ectwo}
\left|\frac{12}{(2\pi i)^4}c_2(\tau)-1-240q\right|\le 2204|q|^2.
\end{equation}
Further, using~(\ref{eschwa}), we obtain,  for ${|q|\le0.005}$, 
$$
\left|\log\frac{(2\pi i)^{12}q(1-q)^{24}}{\Delta(\tau)}\right|=
24\left|\sum_{n=2}^\infty\log\left(1-q^n\right)\right| \le 24.1\sum_{n=2}^\infty |q|^n \le 24.3|q|^2.
$$
Hence
\begin{align*}
\left|\frac{(2\pi i)^{12}q}{\Delta(\tau)}-1-24q\right|&\le 
\left|(1-q)^{-24}\right|\left|\frac{(2\pi i)^{12}q(1-q)^{24}}{\Delta(\tau)}-1\right|+\left|(1-q)^{-24}-1-24q\right|\\
&\le 1.13\left|\log\frac{(2\pi i)^{12}q(1-q)^{24}}{\Delta(\tau)}\right|+314|q|^2\le 342|q|^2.
\end{align*}
Combining this with~(\ref{ectwo}), we obtain~(\ref{ejq}) after a tiresome, but straightforward calculation.\qed
\end{proof}

\medskip

\begin{corollary}
\label{cdplus}
For any ${\tau\in D}$ we have 
either ${|j(\tau)|\le 2500}$ or ${|q_\tau|< 0.001}$.\qed
\end{corollary}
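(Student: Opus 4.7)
The plan is to apply Proposition~\ref{pqj} directly to $\tau \in D$. Since for such $\tau$ we have $\im \tau \ge \sqrt 3/2$, hence $|q_\tau| \le e^{-\pi\sqrt 3} < 0.005$, the hypothesis of Proposition~\ref{pqj} is satisfied, yielding
\begin{equation*}
|j(\tau)| \le |q_\tau|^{-1} + 744 + 330000\,|q_\tau|.
\end{equation*}
If $|q_\tau| < 0.001$ we are in the second alternative of the corollary. Otherwise I need to verify that the right-hand side stays below $2500$ on the interval $|q_\tau| \in [0.001,\, e^{-\pi\sqrt 3}]$.

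The key observation is that the function $x \mapsto x^{-1} + 330000\,x$ is convex on $(0,\infty)$, so on any compact subinterval of the positive reals it attains its maximum at an endpoint. At $x = 0.001$ one evaluates to $1000 + 330 = 1330$, and at $x = e^{-\pi\sqrt 3}$, using $e^{\pi\sqrt 3} > 230$, one obtains at most roughly $231 + 1430 = 1661$. Adding $744$ in each case gives a value comfortably below $2500$ (approximately $2074$ and $2405$ respectively), completing the argument.

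I do not foresee any real obstacle; this is a one-line numerical check. The only trap to avoid is combining the bounds $|q_\tau|^{-1} \le 1000$ and $|q_\tau| \le e^{-\pi\sqrt 3}$ naively as simultaneous upper bounds, which would yield roughly $1000 + 744 + 1430 \approx 3174$ and miss the estimate — the convexity observation (or equivalently a single-variable case analysis) is what ensures that only one of the two extremal terms can dominate at a time.
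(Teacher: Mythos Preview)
Your proof is correct and is exactly the intended argument: the paper states the corollary with an immediate \qed, treating it as a direct numerical consequence of Proposition~\ref{pqj}, and you have simply filled in the elementary check. The convexity observation is a clean way to reduce to the two endpoints; one could equally well note that $e^{\pi\sqrt3}\in(230,231)$ to bound both $|q_\tau|^{-1}$ and $330000|q_\tau|$ at that end, which you effectively do.
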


\subsection{Estimating Siegel's Functions}
\label{sssieg}
For a rational number~$a$ we define ${q^a=e^{2\pi i a\tau}}$. 
Let ${\bfa=(a_1,a_2)\in \Q^2}$ be such that ${\bfa\notin \Z^2}$, and let ${g_\bfa:\HH\to \C}$ be the corresponding \emph{Siegel function} \cite[Section~2.1]{KL81}. Then, putting ${z=a_1\tau+a_2}$ and ${q_z=q_\tau^{a_1}e(a_2)}$, where ${e(a)=e^{2\pi i a}}$,  we have the following infinite product presentation for~$g_\bfa$ \cite[page~29]{KL81} (where $B_2(T)$ is the second Bernoulli polynomial):
\begin{equation}
\label{epga}
g_\bfa(\tau)= -q_\tau^{B_2(a_1)/2}e\left(\frac{a_2(a_1-1)}2\right)(1-q_z)\prod_{n=1}^\infty\left(1-q_\tau^nq_z\right)\left(1-q_\tau^n/q_z\right).
\end{equation}
We also have \cite[pages 29--30]{KL81} the relations
\begin{align}
\label{eperga}
g_\bfa\circ\gamma &=g_{\bfa\gamma} \cdot(\text{a root of unity}) \quad \text{for} \quad \gamma\in\Gamma(1),\\
\label{eaa'}
g_\bfa&=g_{\bfa'}\cdot(\text{a root of unity})
 \quad \text{when} \quad \bfa\equiv\bfa'\mod\Z^2.
\end{align}
Remark that the roots of unity  in~(\ref{eperga}) and~(\ref{eaa'}) are of order dividing $12N$; this will be used later.

The order of vanishing of~$g_\bfa$ at $i\infty$ (that is, the only rational number~$\ell$ such that the limit 
${{\displaystyle\lim_{\tau\to i\infty}}q_\tau^{-\ell}g_\bfa(\tau)}$
exists and is non-zero) is equal to the number~$\ell_\bfa$, defined in Subsection~\ref{ssnota},  
see \cite[page~31]{KL81}. 

\begin{proposition}
\label{psiar}
Assume that ${a_1\notin\Z}$ and 
let~$N$ be a denominator of~$a_1$  (that is, a positive integer satisfying ${Na_1\in \Z}$). 
Then for  ${|q_\tau|\le 10^{-N}}$ we have
\begin{equation}
\label{ega1}
\Bigl|\log \left|g_\bfa(\tau)\right|- \ell_\bfa\log|q_\tau|\Bigr| \le 3|q_\tau|^{1/N}.
\end{equation}
Further, assume that ${a_1\in \Z}$. Then for ${|q_\tau|\le 0.1}$ we have 
\begin{equation}
\label{ega0}
\Bigl|\log \left|g_\bfa(\tau)\right|- \ell_\bfa\log|q_\tau|-\log\left|1-e(a_2)\right|\Bigr| \le 3|q_\tau|.
\end{equation}
\end{proposition}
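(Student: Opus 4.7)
The plan is to take $\log|\cdot|$ termwise in the product expansion (\ref{epga}). The prefactor $-q_\tau^{B_2(a_1)/2}e\bigl(a_2(a_1-1)/2\bigr)$ has modulus $|q_\tau|^{B_2(a_1)/2}$, so it contributes the main term $(B_2(a_1)/2)\log|q_\tau|$, while each factor $(1-w)$ from the product contributes a term $\log|1-w|$ to be treated as error. By the periodicity relation (\ref{eaa'}), which alters $g_\bfa$ only by a root of unity, I may reduce to $\bfa\in[0,1)^2$, in which case $B_2(a_1)/2$ equals the order of vanishing $\ell_\bfa$ at $i\infty$. The task then reduces to bounding
$$
\log|1-q_z|+\sum_{n=1}^\infty\bigl(\log|1-q_\tau^nq_z|+\log|1-q_\tau^n/q_z|\bigr),
$$
with the only structural difference between the two parts being whether the leading $\log|1-q_z|$ term is absorbed into the error (as in (\ref{ega1})) or kept as an explicit contribution on the right (as in (\ref{ega0})).

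For (\ref{ega1}) my reduction yields $0<a_1<1$, and the denominator hypothesis $Na_1\in\Z$ then forces $1/N\le a_1\le 1-1/N$. This gives the decisive bounds $|q_z|=|q_\tau|^{a_1}\le|q_\tau|^{1/N}$ and $|q_\tau/q_z|=|q_\tau|^{1-a_1}\le|q_\tau|^{1/N}$, both of size at most $0.1$ under the assumption $|q_\tau|\le 10^{-N}$. The Schwarz-type estimates (\ref{eschwa}) then give $\bigl|\log|1-w|\bigr|\le|\log(1-w)|\le c|w|$ with an explicit $c$ close to $1$ whenever $|w|\le 0.1$. Summing the three contributions as geometric series --- with special attention to the dominant $n=1$ term of $\sum_n\log|1-q_\tau^n/q_z|$, whose base is $|q_\tau|^{1-a_1}$ rather than $|q_\tau|$ --- yields the target bound $3|q_\tau|^{1/N}$.

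For (\ref{ega0}) the reduction forces $a_1=0$ and $a_2\notin\Z$, so that $q_z=e(a_2)$ sits on the unit circle. The leading factor now contributes exactly $\log|1-e(a_2)|$ as stated, and every remaining factor in the product satisfies $|q_\tau^nq_z|=|q_\tau^n/q_z|=|q_\tau|^n\le 0.1^n$. Bounding each term via (\ref{eschwa}) and summing two plain geometric series of ratio $|q_\tau|$ gives an error of at most $3|q_\tau|$, as claimed.

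The argument is essentially a bookkeeping exercise once the product formula (\ref{epga}) and the reduction to $\bfa\in[0,1)^2$ are in place; the only (mild) obstacle is to track the numerical constants from the Schwarz-type estimates and the geometric-series tails tightly enough to achieve the constant $3$ in both conclusions under the stated smallness hypotheses on $|q_\tau|$.
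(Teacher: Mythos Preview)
Your proposal is correct and follows essentially the same route as the paper: reduce to $0\le a_1<1$ via the $\Z^2$-periodicity, peel off the prefactor to get the $\ell_\bfa\log|q_\tau|$ main term, and bound the remaining logarithms by the Schwarz estimate~(\ref{eschwa}) with $r=0.1$ together with geometric summation. The paper organizes the $a_1\notin\Z$ case by separating out the two dominant contributions $\log|1-q_z|$ and $\log|1-q_\tau/q_z|$ (each $\le 1.1|q_\tau|^{1/N}$) and bounding the tail by $0.3|q_\tau|^{1/N}$, which is exactly your ``special attention to the $n=1$ term of $\sum_n\log|1-q_\tau^n/q_z|$'' remark in slightly different packaging.
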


\begin{proof}
Due to~(\ref{eperga}), we may assume that ${0\le a_1<1}$ and distinguish between the cases ${0<a_1<1}$ and ${a_1=0}$. 
Assume first that ${0<a_1<1}$. According to~(\ref{epga}), the left-hand side of~(\ref{ega1}) is equal to 
$$
\left|\log|1-q_z| + \log|1-q_\tau/q_z|+\sum_{n=1}^\infty\log |1-q_\tau^n q_z|+\sum_{n=2}^\infty\log |1-q_\tau^n/q_z|\right|.
$$
Since ${0<a_1<1}$, both~$|q_z|$ and~$|q_\tau/q_z|$ are bounded  by $|q_\tau|^{1/N}$, which does not exceed~$0.1$ because ${|q_\tau|\le 10^{-N}}$. (One can say even more: one of these numbers is bounded by $|q_\tau|^{1/N}$ and the other by $|q_\tau|^{1/2}$, which will be used later.)
  Hence, using~(\ref{eschwa}) with ${r=0.1}$, we obtain 
\begin{equation}
\label{elqz}
\Bigl|\log|1-q_z| + \log|1-q_\tau/q_z|\Bigr|\le 2\cdot1.1|q_\tau|^{1/N}.
\end{equation}
Similarly, each of $\left|q_\tau^n q_z\right|$ and $\left|q_\tau^n/ q_z\right|$  does not exceed $0.1$, whence
\begin{equation}
\label{esumn}
\left|\sum_{n=1}^\infty\log |1-q_\tau^n q_z|+\sum_{n=2}^\infty\log |1-q_\tau^n/q_z|\right|\le 1.1\frac{\left|q_\tau q_z\right|+\left|q_\tau^2/q_z\right|}{1-|q_\tau|}\le 3|q_\tau|\cdot|q_\tau|^{1/N} ,
\end{equation}
which does not exceed $0.3|q_\tau|^{1/N}$. This proves~(\ref{ega1}). 

When ${a_1=0}$ then ${a_2\notin\Z}$ and ${\zeta =e(a_2)\ne 1}$. Further, we have 
${q_z=\zeta}$ and the left-hand side of~(\ref{ega1}) is 
${\bigl|\sum_{n=1}^\infty\log|1-q_\tau^n\zeta|+ \sum_{n=1}^\infty
\log|1-q_\tau^n/\zeta|\bigr|}$.
Estimating the sums using~(\ref{eschwa}), we obtain~(\ref{ega0}). \qed 
\end{proof}

\medskip

Since Siegel's functions has no poles nor zeros on the upper half plane~$\HH$, it should be 
bounded from above and from below on any compact subset of~$\HH$. In particular, it 
should be bounded where~$j$ is bounded. Here is a quantitative version of this. 

\begin{proposition}
\label{pgalar}
Let ${\bfa \in \Q^2}$ be of order ${N>1}$ in ${(\Q/\Z)^2}$. Then for any ${\tau\in 
\HH}$ we have     
\begin{equation}
\label{esmallj}
\bigl|\log |g_\bfa(\tau)|\bigr|\le \frac1{12}\log\bigl(|j(\tau)|+2200\bigr)+\log 
N +0.1.
\end{equation}
\end{proposition}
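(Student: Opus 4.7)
The plan is to reduce to $\tau\in D$ and $\bfa=(a_1,a_2)\in[0,1)^2$, apply the Siegel product formula~(\ref{epga}) termwise, and combine a main piece controlled by $|j(\tau)|$ with correction pieces controlled by $N$ via the order-$N$ hypothesis on $\bfa$.

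The reduction uses~(\ref{eperga}) and~(\ref{eaa'}), both of which modify $g_\bfa$ only by a root of unity: this lets me assume $\tau\in D$ and $\bfa\in[0,1)^2$ without changing $|g_\bfa(\tau)|$, $|j(\tau)|$, or the order $N$ of $\bfa$ in $(\Q/\Z)^2$. In particular $|q_\tau|\le e^{-\pi\sqrt 3}<0.005$. Taking $\log|\cdot|$ in~(\ref{epga}) and isolating the $n=1$ summand of the $(1-q_\tau^n/q_z)$-product, as in the proof of Proposition~\ref{psiar}, I write
\begin{equation*}
\log|g_\bfa(\tau)|=\ell_\bfa\log|q_\tau|+\log|1-q_z|+\log|1-q_\tau/q_z|+\Sigma,
\end{equation*}
with $\Sigma=\sum_{n\ge 1}\log|1-q_\tau^nq_z|+\sum_{n\ge 2}\log|1-q_\tau^n/q_z|$. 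Every factor in $\Sigma$ has modulus at most $|q_\tau|<0.005$, so by~(\ref{eschwa}) $|\Sigma|$ is a very small absolute constant.

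For the main term I use $|\ell_\bfa|\le 1/12$ and convert $|\log|q_\tau||$ into a logarithm of $|j(\tau)|$ via Proposition~\ref{pqj} together with Corollary~\ref{cdplus}: when $|q_\tau|<0.001$, Proposition~\ref{pqj} gives $|q_\tau^{-1}|\le |j(\tau)|+1074\le|j(\tau)|+2200$, hence $(1/12)|\log|q_\tau||\le(1/12)\log(|j(\tau)|+2200)$; when instead $|j(\tau)|\le 2500$ and $|q_\tau|\ge 0.001$, the quantity $(1/12)|\log|q_\tau||$ is bounded by an absolute constant that is itself smaller than $(1/12)\log(|j(\tau)|+2200)$. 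For the two ``dangerous'' factors $\log|1-q_z|$ and $\log|1-q_\tau/q_z|$ the order hypothesis is decisive: $a_1\in\{0,1/N,\ldots,(N-1)/N\}$ forces $\max(a_1,1-a_1)\ge 1/2$ always and $\min(a_1,1-a_1)\ge 1/N$ unless $a_1=0$. Hence at least one of $|q_z|=|q_\tau|^{a_1}$ and $|q_\tau/q_z|=|q_\tau|^{1-a_1}$ is bounded by $|q_\tau|^{1/2}$, contributing $O(|q_\tau|^{1/2})$ to its logarithm, while the other is bounded by $|q_\tau|^{1/N}\le e^{-\pi\sqrt 3/N}$, contributing at most $-\log(1-e^{-\pi\sqrt 3/N})\le\log N+O(1)$. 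The exceptional case $a_1=0$ is direct: $a_2=k/N$ with $\gcd(k,N)=1$ gives $|1-q_z|=2\sin(\pi\tilde a_2)\ge 2\sin(\pi/N)\ge 4/N$, hence the same $\log N+O(1)$ bound, and $|q_\tau/q_z|=|q_\tau|$ makes the other factor negligible.

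Summing the four blocks yields the asserted inequality once the $O(1)$ contributions are absorbed into the additive $0.1$. The main obstacle is purely numerical rather than conceptual: the constants $2200$ and $0.1$ are somewhat tight, so each of the $O(1)$ estimates above must be carried out in its sharp form; the least slack arises in the regime $|j(\tau)|\le 2500$, $|q_\tau|\ge 0.001$, where the main-term contribution $(1/12)\log(|j(\tau)|+2200)$ is smallest and all the correction $O(1)$'s have to be controlled at once.
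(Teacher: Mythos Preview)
Your argument is correct and follows essentially the same route as the paper: reduce to $\tau\in D$ and $0\le a_1<1$, split $\log|g_\bfa(\tau)|$ into $\ell_\bfa\log|q_\tau|$, the two factors $\log|1-q_z|$ and $\log|1-q_\tau/q_z|$, and a negligible tail, then bound the dangerous factors by $\log N+O(1)$ using that one of $a_1,1-a_1$ is $\ge 1/2$ and the other (when $a_1\ne0$) is $\ge 1/N$.

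One simplification: your case split via Corollary~\ref{cdplus} is unnecessary. For every $\tau\in D$ one has $|q_\tau|\le e^{-\pi\sqrt3}$, and Proposition~\ref{pqj} then gives directly
\[
|q_\tau^{-1}|\le |j(\tau)|+744+330000\,e^{-\pi\sqrt3}<|j(\tau)|+2200,
\]
so $\bigl|\log|q_\tau|\bigr|\le\log\bigl(|j(\tau)|+2200\bigr)$ uniformly, with no separate ``$|j|\le 2500$'' regime to worry about. This dissolves the numerical obstacle you flag at the end. Conversely, your treatment of the case $a_1=0$ via $|1-e(a_2)|\ge 2\sin(\pi/N)\ge 4/N$ is a bit more careful than the paper's one-line appeal to~(\ref{ega0}), since the quantity $|\log|1-e(a_2)||$ genuinely needs the $\log N$ allowance rather than just $\log 2$.
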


\begin{proof}
Replacing~$\tau$ by $\gamma\tau$ and~$g_\bfa$ by~$g_{\bfa\gamma^{-1}}$ with a 
suitable ${\gamma\in \Gamma(1)}$, we may assume that ${\tau\in D}$, and in 
particular ${|q_\tau|<e^{-\pi\sqrt3}}$. We may also assume that ${0\le a_1<1}$. 
Now we argue as in the previous proof, the only difference being that in the case  ${0<a_1
<1}$, we replace~(\ref{elqz}) by
\begin{equation}
\label{eqqzt}
\bigl|\log |1-q_z|+\log|1-q_\tau/q_z|\bigr|\le  \left|\log\bigl|1-e^{-\pi\sqrt3/N}
\bigr|\right|+ \left|\log\bigl|1-e^{-\pi\sqrt3/2}\bigr|\right|
\le \log N + 0.07 .
\end{equation}
Indeed, among the numbers $|q_z|$ and $|q_\tau/q_z|$ one is bounded by $|q_\tau|^{1/N}$ 
and the other is bounded by $|q_\tau|^{1/2}$, which implies~(\ref{eqqzt}). 

Estimate~(\ref{esumn}) holds again, the right-hand side being bounded by ${3|q_\tau|\le 
0.02}$. 
Thus, in the case ${0<a_1<1}$ we have
\begin{equation}
\label{enzer}
\bigl|\log \left|g_\bfa(\tau)\right|- \ell_\bfa\log|q_\tau|\bigr| \le \log N+0.1.
\end{equation}
In the case ${a_1=0}$ we can use estimate~(\ref{ega0}), which implies 
${\bigl|\log \left|g_\bfa(\tau)\right|- \ell_\bfa\log|q_\tau|\bigr| \le \log 2+
0.02}$,
and \textit{a fortiori}~(\ref{enzer}).

Finally,
Proposition~\ref{pqj} implies that 
${\bigl|\log |q_\tau|\bigr|\le \log\bigl(|j(\tau)|+2200\bigr)}$.
Combining this~(\ref{enzer}), and using the inequality ${|\ell_\bfa|\le 1/12}$,  we obtain~(\ref{esmallj}). \qed 
\end{proof}

\subsection{Non-archimedean versions}

We also need non-archimedean versions of some of the above inequalities. In this 
subsection~$K_v$ is a field complete with respect to a non-archimedean valuation~$v$ and~$\bar K_v$ its algebraic closure. Let ${q\in K_v}$ satisfy ${|q|_v<1}$. Put ${j(q)= q^{-1}+ 744+196884q+\ldots}$ Further,
for  ${\bfa=(a_1,a_2)\in \Q^2}$ such that ${\bfa\notin\Z^2}$   put ${q_z=q^{a_1}e(a_2)}$ and define 
$$
g_\bfa=g_\bfa(q)= -q^{B_2(a_1)/2}e\left(\frac{a_2(a_1-1)}2\right)(1-q_z)\prod_{n=1}^\infty\left(1-q^nq_z\right)\left(1-q^n/q_z\right).
$$
This expression is not well-defined because we use rational powers of~$q$. However, if we 
fix ${q^{1/2N^2}\in\bar K_v}$, where~$N$ is the order of~$\bfa$ in $(\Q/\Z)^2$, then 
everything becomes well-defined, and, moreover, we again have~(\ref{eperga}) 
and~(\ref{eaa'}). The statement of the following proposition is independent on the 
particular choice of~$q^{1/2N^2}$.  Recall that ${\ell_\bfa=B_2\bigl(a_1-\lfloor a_1\rfloor\bigr)/2}$.

\begin{proposition}
\label{psinar}
In the above set-up, when ${a_1\notin \Z}$ we have
${\log\left|g_\bfa(q)\right|_v=\ell_\bfa\log|q|_v}$, 
and when ${a_1\in \Z}$ we have 
${\log\left|g_\bfa(q)\right|_v=\ell_\bfa\log|q|_v + \log|1-e(a_2)|_v}$. 

\end{proposition}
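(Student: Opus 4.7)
The plan is to carry out a direct computation based on the ultrametric inequality $|1-x|_v = 1$ whenever $|x|_v < 1$. First I would use the periodicity relation~(\ref{eaa'}) to reduce to the range $0 \le a_1 < 1$: shifting $\bfa$ by an element of $\Z^2$ multiplies $g_\bfa$ by a root of unity, and roots of unity have $v$-absolute value $1$ in any non-archimedean field (since if $\zeta^m = 1$ then $|\zeta|_v^m = 1$, forcing $|\zeta|_v = 1$). Taking $|\cdot|_v$ of both sides of the product defining $g_\bfa(q)$, the leading scalar $-e\bigl(a_2(a_1-1)/2\bigr)$ disappears for the same reason, leaving
\begin{equation*}
|g_\bfa(q)|_v = |q|_v^{B_2(a_1)/2}\, |1-q_z|_v \prod_{n=1}^\infty |1-q^n q_z|_v \cdot |1-q^n/q_z|_v,
\end{equation*}
with $|q_z|_v = |q|_v^{a_1}$ since $|e(a_2)|_v = 1$.

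Next I would split into two cases. If $0 < a_1 < 1$, then $|q_z|_v = |q|_v^{a_1} < 1$, $|q^n q_z|_v = |q|_v^{n+a_1} < 1$, and $|q^n/q_z|_v = |q|_v^{n-a_1} < 1$ for every $n \ge 1$ (using $n - a_1 \ge 1 - a_1 > 0$); the ultrametric inequality collapses each $|1-\cdot|_v$ factor to $1$, leaving $|g_\bfa(q)|_v = |q|_v^{B_2(a_1)/2}$. If $a_1 = 0$, the hypothesis $\bfa \notin \Z^2$ forces $a_2 \notin \Z$, so $q_z = e(a_2)$ is a nontrivial root of unity with $|q_z|_v = 1$; the factor $|1-q_z|_v = |1-e(a_2)|_v$ must now be retained (and genuinely differs from $1$ precisely when $v$ sits above a prime dividing the order of $e(a_2)$). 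For $n \ge 1$ one still has $|q^n q_z|_v = |q^n/q_z|_v = |q|_v^n < 1$, so the remaining $|1-\cdot|_v$ factors again collapse to $1$.

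Taking logarithms in each case produces the two stated identities, once one remembers that $\ell_\bfa = B_2(a_1-\lfloor a_1\rfloor)/2$ in the reduced range. There is no real obstacle: the argument is purely formal, because in the non-archimedean setting each factor $1 - x$ with $|x|_v < 1$ contributes exactly $0$ to $\log|g_\bfa(q)|_v$, so the infinite product degenerates to a finite number of terms. The only care required is in the case split, to identify precisely the (at most one) factor whose argument fails the strict inequality $|\cdot|_v < 1$; this is exactly the factor $1-q_z$ in the integer case $a_1 \in \Z$, and no factor at all otherwise.
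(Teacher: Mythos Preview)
Your proof is correct and follows exactly the approach the paper takes: the paper's own proof is the single sentence ``This is obvious when ${0\le a_1<1}$, and the general case reduces to this one using~(\ref{eaa'}),'' and you have simply written out the obvious part in detail. The case split and the use of the ultrametric equality $|1-x|_v=1$ for $|x|_v<1$ are precisely what is intended.
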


\begin{proof}
This is obvious when ${0\le a_1<1}$, and the general case reduces to this one using~(\ref{eaa'}). \qed
\end{proof}

\begin{corollary}
\label{cnaglob}
In the above set-up, we have 
$$
\left|\log\left|g_\bfa(q)\right|_v\right|\le \frac1{12}\log|j(q)|_v+
\begin{cases}0&\text {if ${v(N)=0}$}\\
\frac{\log p}{p-1}& \text{if ${v(N)>0}$ and~$p$ is the prime below~$v$}.
\end{cases}
$$
\end{corollary}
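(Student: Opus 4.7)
The plan is to extract both statements directly from Proposition \ref{psinar}, using only the ultrametric inequality to evaluate $|j(q)|_v$ and a standard $p$-adic computation to bound the root-of-unity contribution that appears when $a_1 \in \Z$.

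First I would read off $|j(q)|_v$ from the expansion $j(q) = q^{-1} + 744 + 196884q + \cdots$: the coefficients are in $\Z$, so every term past $q^{-1}$ has $v$-adic absolute value at most $\max(1,|q|_v,|q|_v^2,\ldots) = 1$, which is strictly smaller than $|q^{-1}|_v$. The series converges $v$-adically since $|q|_v < 1$, so by ultrametric $|j(q)|_v = |q|_v^{-1}$ and $\log|j(q)|_v = -\log|q|_v$. Proposition \ref{psinar} then gives
\[
\log|g_\bfa(q)|_v = \ell_\bfa\log|q|_v + \eps,
\]
with $\eps = 0$ if $a_1 \notin \Z$ and $\eps = \log|1-e(a_2)|_v$ if $a_1 \in \Z$. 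Since $|\ell_\bfa| \le 1/12$, the first summand has absolute value at most $\frac1{12}\log|j(q)|_v$. The corollary thus reduces to showing $\eps = 0$ when $v(N) = 0$ and $|\eps| \le (\log p)/(p-1)$ when $v(N) > 0$, in the remaining case $a_1 \in \Z$.

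In that case $a_2$ has order exactly $N$ in $\Q/\Z$ (since $\bfa$ has order $N$ in $(\Q/\Z)^2$ and $a_1 \in \Z$), so $e(a_2)$ is a primitive $N$-th root of unity $\zeta_N$. If $v(N) = 0$, then $\Q_p(\zeta_N)/\Q_p$ is unramified and $1-\zeta_N$ is a $v$-adic unit, so $\eps = 0$. If $v(N) > 0$, write $N = p^rM$ with $\gcd(p,M)=1$ and $r \ge 1$, and split $\zeta_N = \zeta\eta$ with $\zeta$ primitive $p^r$-th and $\eta$ primitive $M$-th. When $M = 1$, one has $v(1-\zeta) = 1/\phi(p^r) \le 1/(p-1)$; when $M > 1$, the identity $1-\zeta\eta = (1-\eta) + \eta(1-\zeta)$ together with $v(1-\eta) = 0 < v(1-\zeta)$ forces $v(1-\zeta\eta) = 0$ by ultrametric. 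In either subcase $v(1-\zeta_N) \le 1/(p-1)$, so $|\eps| = v(1-\zeta_N)\log p \le (\log p)/(p-1)$, completing the proof.

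No step here poses a genuine obstacle; the only part that requires any thought is the case analysis for $v(1-\zeta_N)$, which is classical $p$-adic cyclotomic theory.
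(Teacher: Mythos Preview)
Your proof is correct and follows essentially the same approach as the paper: identify $|j(q)|_v=|q|_v^{-1}$, invoke Proposition~\ref{psinar}, use $|\ell_\bfa|\le 1/12$, and bound $|1-e(a_2)|_v$ in the two cases $v(N)=0$ and $v\mid p\mid N$. The paper's proof simply asserts the bounds $1\ge|1-e(a_2)|_v\ge p^{-1/(p-1)}$ without further comment, whereas you supply the cyclotomic computation (splitting into the prime-power and mixed-order subcases); this is harmless extra detail rather than a different route.
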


\begin{proof}
It suffices to notice that  ${|j(q)|_v=|q|_v^{-1}}$, that ${|\ell_\bfa|\le 1/12}$, that 
${|1-e(a_2)|_v=1}$ if ${v(N)=0}$, and that
${1\ge|1-e(a_2)|_v\ge p^{-1/(p-1)}}$ if ${v\mid p\mid N}$. \qed
\end{proof}

\section{Locating the ``nearest cusp''}
\label{snecu}
Let~$N$ be a positive integer,~$G$ a subgroup of ${\GL_2(\Z/N\Z)}$ and~$X_G$ the 
corresponding modular curve, defined over a number field~$K$. In this section we fix a 
valuation~$v$ of~$K$. We denote by~$\OO$ the ring of integers of~$K$ and by~$K_v$ the 
$v$-completion of~$K$. When~$v$ is non-archimedean, we denote by~$\OO_{v}$ the ring of integers of~$K_{v}$, and by~$k_{v}$ its
residue field at~$v$. As usual~$\zeta_{N}$ will denote a primitive~$N$-th root of unity. 
Recall that when we say that a curve ``is defined over'' a field, it means that this curve has a 
geometrically integral model over that field.

Let~$P$ be a point on $X_G(K_v)$ such that $|j(P)|$ is ``large''. Then it is intuitively clear 
that, in the $v$-adic metric,~$P$ is situated ``near'' a cusp of~$X_G$. The purpose of this 
section is to make this intuitive observation precise and explicit. We shall locate this 
``nearest'' cusp and specify what the word ``near'' means.

We first recall the following description of the cuspidal locus of~$X(N)$ (for more details see 
e.g.~\cite[Chapitres V and VII)]{DR73}). The cusps of~$X(N)$ define a closed subscheme
of the smooth locus of the modular model of~$X(N)$ over~$\Z [\zeta_{N}]$. Fix a 
uniformization~$X(N)(\C )\simeq\bar{\HH} /\Gamma (N)$, let~$c_\infty$ be the 
cusp corresponding to~$\infty\in\bar{\HH}$, and write ${q^{1/N} =
e^{2i\pi\tau/N}}$ the usual parameter. If~$c=\gamma (c_\infty )$, for some~$\gamma\in
\SL_{2} (\Z )$, is another cusp, denote by~$q_c :=q\circ \gamma^{-1}$ the parameter 
on~$X(N)(\C )$ at $c$. It follows from~\cite[Chapitre VII, Corollaire 2.5]{DR73} that the 
completion of the curve $X(N)$ over $\Z [\zeta_{N}]$ along the section $c$ is isomorphic 
to ${{\mathrm{Spec}}\bigl(\Z [\zeta_{N}]\,
[[q_c^{1/N} ]]\bigr)}$. In other words, the parameter $q_c^{1/N}$ at~$c$ on $X(N)(\C )$ is 
actually defined over~$\Z [\zeta_{N}]$, that is~$q_c^{1/N}$ comes from an element of the 
completed local ring~$\hat{\cal O}_{X(N),c}$ of the modular model of~$X(N)$ over $\Z
[\zeta_{N}]$, along the section~$c$. Moreover the modular interpretation associates with 
each cusp a N\'eron polygon~$C$ with~$N$ sides on~$\Z [\zeta_N ]$, endowed with its 
structure of generalized elliptic curve, and enhanced with a basis of~$C[N]\simeq 
\Z /N\Z \times\mu_N =\langle q^{1/N} ,\zeta_N \rangle$ such that the determinant
of this basis is~$1$, and two bases are identified if they are conjugate by the subgroup ${\pm U=\pm\left(\topbot10\topbot *1\right)}$ of $\GL_2(\Z/N\Z)$, the
action being ${\left(\topbot\epsilon0\topbot a\epsilon\right):(q^{1/N},\zeta_N )\mapsto (q^{\epsilon /N}\zeta_N^a ,\zeta_N^{
\epsilon})}$, where ${\epsilon =\pm 1}$ and ${a\in \Z /N\Z}$. We may, for instance, interpret 
$c_{\infty}$ as the orbit ${\left\{ (C,(q^{\epsilon/N}\zeta_N^a ,\zeta_{N}^{\epsilon}
)),\ \epsilon\in\{\pm 1\} ,\ a\in\Z /N\Z\right\}}$ of enhanced N\'eron polygons over $\Z 
[\zeta_{N}]$.  
 
  Next we describe the cusps on an arbitrary~$X_{G}$. For each cusp~$c$ 
of~$X_G$ we obtain a parameter at~$c$ on~$X_{G}$ by picking a lift $\tilde{c}$ of $c$ 
on $X(N)$ and taking the norm $\prod q_{\tilde{c}}^{1/N}\circ \gamma$, where~$\gamma$ runs 
through a set of representatives of $\Gamma /\Gamma (N)$ (recall that ${\Gamma :=\{ \gamma
\in\SL_2 (\Z ),\ (\gamma \mod N)\in G\}}$). We denote by~$t_{c}$ this parameter in the 
sequel. Note that it is defined over a (possibly strict) subring of $\Z [\zeta_{N}]$. The modular 
interpretation of~$X_{G}$ associates to each cusp an orbit of our enhanced N\'eron 
polygon ${\left(C,(q^{1/N},
\zeta_N )\right)}$ under the action of the group generated by~$G$ 
and~$\pm U$ given by ${\left( \topbot ac \topbot bd\right)\colon \left(C,(q^{1/N},
\zeta_N )\right) \mapsto\left(C,(q^{a/N}\zeta_{N}^b ,q^{c/N}\zeta_{N}^d ) \right)}$. It follows 
from the above that the cusps of~$X_G$ have values in a subring of~$\Z [\zeta_{N}]$. Moreover, assume that~$X_{G}$ is 
defined over~$K$, of which~$v$ is a place of characteristic~$p$, with~$N=p^n N'$ 
and~$p\nmid N'$. Extending $v$ to a place of $\OO_v [\zeta_{N'}]$ if necessary, and
setting ${\OO'_v :=(\OO [\zeta_{N'}])_v}$, one sees that the closed subscheme of cusps 
over~$\OO'_v$ may be written as a sum of connected components of shape $\mathrm{Spec}
(R)$ where~$R$ is a subring of ${\OO'_v [\zeta_{p^n}]}$. Therefore if ${v(N)=0}$, the 
subscheme of cusps is \'etale over~$\OO_v$, but this may not be the case if $v(N)>0$. In the
latter case, however, the ramification is well controlled. Indeed, with the preceding notations, 
set ${\pi :=(1-\zeta_{p^n})}$. Any two different $p^n$-th roots of unity~$\zeta_{p^n}^a$ 
and~$\zeta_{p^n}^b$ satisfy ${(\zeta_{p^n}^a -\zeta_{p^n}^b )=\pi^{p^k} \alpha}$ 
with~$\alpha$ a~$v$-invertible element and ${0\le k\le n-1}$. It follows that N\'eron 
polygons enhanced with a level-$N$ structure are distinct over ${\Z [\zeta_N ]/(\pi^{p^{n-1}
+1})}$. The modular interpretation shows more precisely that if two different cusps~$c_1$ 
and~$c_2$ have same reduction at~$v$, then~$t_{c_1} (c_2 )$ has $v$-adic valuation less or 
equal to~$1/(p-1)$ (if we normalize~$v$ to have ${v(p)=1}$). This remark will be used later on.  

   To illustrate all this with a familiar example, letting~$G:=\left( \topbot10\topbot **
\right) \subset \GL_2 (\Z /N\Z )$, which gives rise to the modular curve $X_1 (N)$, one
finds that there are $\left|(\Z /N\Z)^\times\right|$ cusps, with modular interpretation corresponding 
to ${\left\{ (C,\zeta_N^{\epsilon a} ): \epsilon\in \{\pm1\} \right\}}$ where~$a$ runs 
through ${(\Z  /N\Z )^\times /\pm 1}$, and ${\{(C,q^{\epsilon a /N}\zeta_N^{\alpha}) : \epsilon\in \{\pm1\},\ 
\alpha\in (\Z  /N\Z )\}}$, where~$a$ runs through the same set. 
The curve $X_1 (N)$ is defined over~$\Q$ and has a modular model over~$\Z$. The cusps 
in the former subset above have values in ${\Z \left[\zeta_{N} +\zeta_{N}^{-1}\right]}$, and 
the cusps in the latter subset have values in~$\Z$. In other words, the closed subscheme of 
cusps over~$\Z$ is isomorphic to the disjoint union of ${\mathrm{Spec} \left(\Z \left[\zeta_{N} +\zeta_{N}^{-1}\right]\right)}$ and $\left|(\Z /N\Z)^\times\right| /2$ copies of $\mathrm{Spec} (\Z )$.

  It is clear from the definition that the above parameter $t_{c}$ defines a $v$-analytic 
function on a $v$-adic neighborhood of~$c\in X_{G} (K_{v})$ which satisfies the initial 
condition ${t_{c} (c)=0}$. Further, if~$e_c$ is the ramification index of the covering $X_G
\to X(1)$ at~$c$ (clearly,~$e_c$ divides~$N$) then, setting ${q_{c} :=t_{c}^{e_{c}}}$, the
familiar expansion ${j= {q_{c}}^{-1}+ 744+196884{q_{c}}+\ldots}$ 
holds in a  $v$-adic neighborhood of~$c$, the right-hand side converging $v$-adically. 

To be precise,~$t_c$ and~$q_c$ are defined and analytic on the set ${\Omega_c=
\Omega_{c,v}}$ defined as follows. If~$v$ is archimedean then ${\Omega_c=Y_G(
\bar{K}_{v})\cup\{c\}}$; in other words,~$\Omega_c$ is~$X_G(\bar{K}_{v})$ with all 
the cusps except~$c$ taken away. If~$v$ is non-archimedean, then~$\Omega_c$ consists of 
the points from $X_G(\bar{K}_{v})$ having reduction~$c$ at~$v$. Notice that 
${X_G(\bar{K}_{v})=\bigcup_{c\in\CC}\Omega_c}$
if ${v\in M_K^\infty}$, and 
${\left\{P\in X_G(\bar{K}_{v}):|j(P)|_v>1\right\}=\bigcup_{c\in\CC}\Omega_c}$
if ${v\in M_K^0}$. More generally, since ${|j(q_c)|=|q_c|_v^{-1}}$ for a non-archimedean~$v$,  for any ${R\ge 
1}$ and any ${v\in M_K^0}$ we have
\begin{equation}
\label{egamomr}
\left\{P\in X_G(\bar{K}_{v}):|j(P)|_v>R\vphantom{{R^{-1}}}\right\}=\bigcup_{c
\in\CC}\left\{P\in \Omega_c:|q_c(P)|_v<R^{-1}\right\},
\end{equation}
which  will be used later.

If~$v$ is non-archimedean and ${v(N)=0}$, the sets~$\Omega_c$ are pairwise disjoint, as
in this case the cusps define a finite \'etale scheme over ${\cal O}_{v}$. In general 
however the sets~$\Omega_c$ are not disjoint, so we need to refine them in order to be able
to define the notion of ``$v$-nearest cusp". Put 
$$
R_v=
\begin{cases}
2500 &\text{if $v\in M_K^\infty$,}\\ 
1& \text{if $v\in M_K^0$  and ${v(N)=0}$,}\\ 
p^{N/(p-1)} & \text{if $v\in M_K^0$  and ${v\mid p \mid N}$,}
\end{cases} \qquad
r_v= \begin{cases}
0.001&\text{if $v\in M_K^\infty$,}\\
R_v^{-1}& \text{if $v\in M_K^0$.}
\end{cases}
$$
Finally,   put
$$
X_G(\bar{K}_{v})^+ =\left\{P\in X_G(\bar{K}_{v}): |j(P)|_v >R_v\right\}, 
\qquad \Omega_c^+=\Omega_{c,v}^+ =\{ P\in \Omega_c:|q_c(P)|_v < r_v\}.
$$
Notice that ${\Omega_c^+=\Omega_c}$ if~$v$ is non-archimedean and ${v(N)=0}$.

\begin{proposition}
\label{pnecu}
In the above set-up, the sets~$\Omega_c^+$ are pairwise disjoint and we have
\begin{equation}
\label{egamom}
X_G (\bar{K}_{v})^+\subseteq\bigcup_{c\in \CC}\Omega_c^+
\end{equation}
with equality for the non-archimedean~$v$.
\end{proposition}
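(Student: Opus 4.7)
I plan to prove the proposition in two stages: first the containment (with equality in the non-archimedean case), then the disjointness; in each, archimedean and non-archimedean valuations are treated separately.

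\textbf{Containment.} For archimedean $v$, given $P$ with $|j(P)| > 2500$, I would lift $P$ to $\tau \in \HH$. A fundamental domain of $\Gamma$ is a finite union $\bigcup_i \gamma_i D$ over coset representatives of $\Gamma \backslash \SL_2(\Z)$, so after a $\Gamma$-move $\tau = \gamma_i \tau'$ with $\tau' \in D$. Corollary \ref{cdplus} yields $|q_{\tau'}| < 0.001 = r_v$; letting $c$ be the cusp of $X_G$ arising as the image of $\gamma_i(i\infty)$, the local identification $q_c(P) = q_{\tau'}$ (matching the $q$-expansions of $j$ on both sides) gives $P \in \Omega_c^+$. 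For non-archimedean $v$, $|j(P)|_v > R_v \ge 1$ forces $P$ to reduce to a cusp of $X_G$; taking any cusp $c$ with matching reduction gives $P \in \Omega_c$, and the $v$-adically convergent expansion $j = q_c^{-1} + 744 + \cdots$ yields $|q_c(P)|_v = |j(P)|_v^{-1} < r_v$, so $P \in \Omega_c^+$. Reading the same identity in reverse gives $\Omega_c^+ \subseteq X_G(\bar K_v)^+$ for non-archimedean $v$.

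\textbf{Disjointness, non-archimedean case.} If $v(N) = 0$, the $\Omega_c$ themselves are pairwise disjoint by the étale cuspidal subscheme. If $v(N) > 0$, suppose $P \in \Omega_{c_1}^+ \cap \Omega_{c_2}^+$ with $c_1 \neq c_2$. If their reductions differ, $\Omega_{c_1} \cap \Omega_{c_2} = \emptyset$; so assume they share a reduction. The key remark from the text then gives $|t_{c_1}(c_2)|_v \ge p^{-1/(p-1)}$. Meanwhile, $P \in \Omega_{c_i}^+$ means $|q_{c_i}(P)|_v < r_v = p^{-N/(p-1)}$, hence $|t_{c_i}(P)|_v < p^{-N/(e_{c_i}(p-1))} \le p^{-1/(p-1)}$ since $e_{c_i} \mid N$. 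Expanding $t_{c_1}$ around $c_2$ in the formal completion (integral coefficients) yields $|t_{c_1}(P) - t_{c_1}(c_2)|_v \le |t_{c_2}(P)|_v < |t_{c_1}(c_2)|_v$, so the non-archimedean triangle inequality forces $|t_{c_1}(P)|_v = |t_{c_1}(c_2)|_v \ge p^{-1/(p-1)}$, contradicting $|t_{c_1}(P)|_v < p^{-1/(p-1)}$.

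\textbf{Disjointness, archimedean case.} Suppose $P \in \Omega_{c_1}^+ \cap \Omega_{c_2}^+$ with $c_1 \neq c_2$. Pick $\eta_i \in \SL_2(\Z)$ with $\eta_i(i\infty)$ a lift of $c_i$ in $\Q \cup \{\infty\}$; then $P$ admits lifts $\eta_i \tau_i'$ with $|q_{\tau_i'}| = |q_{c_i}(P)| < 0.001$, equivalently $\im \tau_i' > -\log(0.001)/(2\pi) > 1.1$. Since $\eta_1 \tau_1'$ and $\eta_2 \tau_2'$ are $\Gamma$-related, $\tau_1' = \gamma' \tau_2'$ for some $\gamma' \in \SL_2(\Z)$ of the form $\eta_1^{-1} \gamma \eta_2$ with $\gamma \in \Gamma$. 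Writing $\gamma'$ with lower-left entry $c$, if $c \neq 0$ then $|c\tau_2' + d|^2 \ge (\im \tau_2')^2 > 1.21$, giving $\im \tau_1' < 1/1.1 < 1.1$, a contradiction. So $\gamma'$ fixes $i\infty$, meaning $\eta_1(i\infty)$ and $\eta_2(i\infty)$ are $\Gamma$-equivalent, i.e., $c_1 = c_2$ in $X_G$. The main obstacle is the non-archimedean $v(N) > 0$ case, which relies on the formal completion structure recalled before the proposition and on the fact that $R_v = p^{N/(p-1)}$ is calibrated precisely against the cusp-separation bound $1/(p-1)$; the archimedean disjointness, by contrast, reduces to a clean $\SL_2(\Z)$-geometry argument in $\HH$.
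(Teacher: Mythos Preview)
Your proof is correct and follows the same overall architecture as the paper: treat containment and disjointness separately, and within each split into the archimedean and non-archimedean cases, the latter further split according to whether $v(N)=0$.

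The differences are in the execution of the disjointness arguments. For archimedean $v$ the paper argues geometrically: since $|q_\tau|<0.001$ forces $\im\tau>1$, the pull-back of $\Omega_c^+$ to $\bar\HH$ lies in $\Delta_c=\bigcup_{\gamma(c_\infty)=c}\gamma(\tilD)$, and the $\Delta_c$ are pairwise disjoint because the $\Gamma(1)$-translates of $\tilD$ tile $\bar\HH$. Your argument is instead the direct M\"obius-transformation computation: two lifts with large imaginary part can only differ by an upper-triangular element of $\SL_2(\Z)$, hence the cusps coincide. Both are standard and equally short; yours avoids invoking the tiling, the paper's avoids the explicit estimate. (A numerical nitpick: $-\log(0.001)/(2\pi)\approx 1.099$, not $>1.1$; but your argument only needs $\im\tau_i'>1$, which is what the paper uses, so this is harmless.)

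For non-archimedean $v$ with $v(N)>0$ the paper is terser than you: it observes $\Omega_{c_1}^+\subset\{|t_{c_1}|_v<p^{-1/(p-1)}\}$, checks $c_2$ does not lie in this disk, and concludes disjointness directly (implicitly using that two open disks in a residue class are nested or disjoint). You unpack this via the power-series expansion of $t_{c_1}$ around $c_2$ and the ultrametric inequality, which is the same principle made explicit. Your ``integral coefficients'' claim is what makes $|t_{c_1}(P)-t_{c_1}(c_2)|_v\le |t_{c_2}(P)|_v$ work; it holds because $t_{c_1}$ lies in the completed local ring at the common closed point, where $t_{c_2}$ reduces to a uniformizer in the special fiber, but strictly speaking this deserves the same one-line justification the paper gives for its shortcut.
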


The proposition implies that for every  ${P\in X_G (\bar{K}_{v})^+}$ there exists a 
unique cusp~$c$ such that ${P\in \Omega_c^+}$. We call it \emph{the $v$-nearest cusp} (or 
simply the nearest cusp) to~$P$.

\begin{proof}
Assume first that~$v$ is archimedean, so that $\bar{K}_{v}=\C$.  As above, 
let~$\Gamma$ be the pull-back of ${\SL_2(\Z/N\Z)\cap G}$ to ${\Gamma(1)=\SL_2(\Z)}$, 
and~$D$ be the usual fundamental domain for $\Gamma(1)$. Notice that $\Gamma(1)$ acts 
naturally on the set~$\CC$ of cusps, and that ${q_{\gamma(c)}=q_c\circ\gamma^{-1}}$ for 
${\gamma\in \Gamma(1)}$, which implies that ${\gamma(\Omega_c^+)=
\Omega_{\gamma(c)}^+}$. 

Fix ${P\in X_G(\C)=\bar\HH/\Gamma}$, and pick a representative ${\tau\in\bar\HH}$ 
for~$P$. Then there exists ${\gamma\in \Gamma(1)}$ such that ${\gamma(\tau)\in 
\tilD}$, where we put
${\tilD=D\cup\{i\infty\}}$. 
With the common abuse of notation, we denote by~$j$ both the  $j$-invariant on~$\HH$ and 
on~$X_G$, so that ${j\bigl(\gamma(\tau)\bigr)=j(\tau)=j(P)}$. Now if ${P\in X_G
(\C)^+}$ then 
${\left|j\bigl(\gamma(\tau)\bigr)\right|=|j(P)|>2500}$, 
and Corollary~\ref{cdplus} implies that ${P^\gamma \in \Omega_{c_\infty}^+}$. Hence 
${P\in \Omega_c^+}$, where ${c=\gamma^{-1}(c_\infty)}$. This 
proves~(\ref{egamom}).

Now let us prove that the sets~$\Omega_c^+$ are pairwise disjoint. For ${\tau\in \HH}$ the 
condition ${|q_\tau|<0.001}$ implies that ${\im\tau>1}$. Hence
$$
\{\tau\in \bar\HH: |q_\tau|<0.001\}\subset \bigcup_{\genfrac{}{}{0pt}{}{\gamma
\in \Gamma(1)}{\gamma(i\infty)=i\infty}}\gamma\left(\tilD\right).
$$
It follows that the pull-back of $\Omega_c^+$ to~$\bar\HH$ is contained in the set
${\Delta_c=\bigcup_{\genfrac{}{}{0pt}{}{\gamma\in \Gamma(1)}{\gamma(c_\infty)
=c}}\gamma\left(\tilD\right)}$.
By the definition of~$\tilD$, for ${\gamma\ne \pm1}$ we have
${\gamma(\tilD)\cap \tilD=\{i\infty\}}$  if $\gamma(i\infty)=i\infty$, and ${\gamma(\tilD)\cap \tilD=
\varnothing}$ otherwise.
It follows that the sets~$\Delta_c$ are pairwise disjoint. Hence so are the 
sets~$\Omega_c^+$. This completes the proof for archimedean~$v$.

We now assume that~$v$ is non-archimedean. In this case~(\ref{egamom}) holds, with 
equality, as a particular case of~(\ref{egamomr}), and we only need to show that the 
sets~$\Omega_c^+$ are pairwise disjoint. 
If $v(N)=0$ then, as already mentioned, the cusps of $X_{G}$ define a finite \'etale closed 
subscheme of $X_{G}$ over ${\cal O}_{v}$, so the sets $\Omega_c=\Omega_c^+$ are 
obviously pairwise disjoint.

Now assume ${v(N)>0}$. Let~$p$ be the residue characteristic of~$v$ and~$p^n \| N$ be 
the largest power of~$p$ dividing~$N=p^n N'$. As we have seen, the scheme of cusps 
on~$X_{G}$ may be no longer \'etale over~$\OO_v$. We can however still partition it into 
connected components, which totally ramify in the fiber at~$v$. More precisely, setting as 
above~${\OO'}_{v} :=(\OO [\zeta_{N'}])_v$, each connected component over $\OO'_{v}$
is schematically a~$\mathrm{Spec} (R)$ for~$R$ a subring of~${\OO'}_v [\zeta_{p^n}]$. 
Each set~$\Omega_{c}$ contains exactly one such connected component of cusps, so 
when~$R$ does ramify nontrivially at~$v$, then~$\Omega_{c}$ is clearly ``too large'' (one 
has~$\Omega_{c_{1}} =\Omega_{c_{2}}$ exactly when~$c_{1}$ and~$c_{2}$ have same 
reduction at~$v$). We want to show that, nevertheless, the refined sets~$\Omega_c^+$ are 
pairwise disjoint.

  If the cusps~$c_1$ and~$c_2$ belong to distinct connected components, then 
already~$\Omega_{c_{1}}$ and~$\Omega_{c_{2}}$ are disjoint, so~$\Omega_{c_{1}}^+$ 
and~$\Omega_{c_{2}}^+$ are disjoint \emph{a fortiori}. Now assume that ~$c_1$ 
and~$c_2$ belong to the same component, i.e. have same reduction at~$v$. In this case, as 
explained before the proposition, one may write ${t_{c_{1}}(c_{2} )=\pi^{p^k} a\in 
\OO'_v [\zeta_{p^n} ]}$, for $\pi$ a certain uniformizer (e.g. $\pi :=(\zeta_{p^n}-1)$), where
the element~$a$ is~$v$-invertible and ${0\le k\le n-1}$. As $v(\pi )=1/p^{n-1} (p-1)$ and 
$\Omega_{c_1}^+$ is contained in $\left\{ P\in X_{G} (\bar{K}_v ):|t_{c_1}(P)|_v <
p^{-1/(p-1)}\right\}$, we see that~$c_2$ does not belong to~$\Omega_{c_1}^+$, which implies 
that the sets~$\Omega_{c_{1}}^+$ and~$\Omega_{c_{2}}^+$ are disjoint. This completes 
the proof of the  proposition. \qed
\end{proof}

\medskip

Now Propositions~\ref{pqj}  has the following consequence. 

\begin{proposition}
If~$v$ is archimedean then for ${P\in X_G(K_v)^+}$ with the nearest cusp~$c$  we have 
${\left|j(P)-q_c(P)^{-1}-744\right|_v \le 330000|q_c(P)|_v}$.
In particular, ${\left|j(P)-q_c(P)^{-1}\right|_v\le 1100}$ and
\begin{equation}
\label{everysimple}
\frac32 |j(P)|_v\ge \left|q_c(P)^{-1}\right|_v\ge \frac12 |j(P)|_v. 
\end{equation}
\end{proposition}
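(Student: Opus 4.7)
The plan is to reduce the statement to a direct application of Proposition~\ref{pqj} via the complex-analytic uniformization. Since $v$ is archimedean, we may identify $\bar{K}_v=\C$ and work with the presentation $X_G(\C)=\bar\HH/\Gamma$ used in the proof of Proposition~\ref{pnecu}. The fact that $c$ is the $v$-nearest cusp to $P$ means, by definition, that $P\in\Omega_c^+$, i.e.\ $|q_c(P)|_v<r_v=0.001$.

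First I would unwind the definition of the parameter at $c$. By the construction recalled just before Proposition~\ref{pnecu}, the parameter $q_c$ is obtained by choosing $\gamma\in\Gamma(1)$ with $\gamma(c_\infty)=c$ and setting $q_c=q\circ\gamma^{-1}$ (up to the standard identifications, and recalling $q_c=t_c^{e_c}$ so that $j=q_c^{-1}+744+\ldots$ still holds). As in the proof of Proposition~\ref{pnecu}, any $P\in\Omega_c^+$ admits a representative $\tau\in\bar\HH$ with $\tau':=\gamma^{-1}(\tau)\in\tilD$, and under these identifications $q_c(P)=q_{\tau'}$ and $j(P)=j(\tau')$. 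The hypothesis $P\in\Omega_c^+$ then becomes $|q_{\tau'}|\le 0.001<0.005$, which is exactly the range of validity of Proposition~\ref{pqj}. Applying that proposition to $\tau'$ yields immediately
\[
\bigl|j(P)-q_c(P)^{-1}-744\bigr|_v \le 330000\,|q_c(P)|_v,
\]
which is the first (and main) assertion.

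The remaining two estimates are then elementary arithmetic. Using $|q_c(P)|_v<0.001$ in the inequality just obtained gives
\[
\bigl|j(P)-q_c(P)^{-1}\bigr|_v \le 744+330000\cdot 0.001 = 1074 \le 1100.
\]
Combined with $|j(P)|_v>R_v=2500$, so that $1100<\tfrac12|j(P)|_v$, the triangle inequality produces both bounds in \eqref{everysimple}: on the one hand $|q_c(P)^{-1}|_v\le |j(P)|_v+1100\le\tfrac32|j(P)|_v$, and on the other $|q_c(P)^{-1}|_v\ge |j(P)|_v-1100\ge\tfrac12|j(P)|_v$.

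The only step requiring any care is the identification $q_c(P)=q_{\tau'}$ with $\tau'\in\tilD$, i.e.\ making sure that choosing the \emph{nearest} cusp in the sense of Proposition~\ref{pnecu} is exactly the choice that lands us in the standard fundamental domain where Proposition~\ref{pqj} applies. This is precisely what the proof of Proposition~\ref{pnecu} established when showing $\Omega_c^+\subseteq\gamma(\tilD)$ for the appropriate $\gamma$, so no additional work is needed here.
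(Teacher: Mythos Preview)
Your proof is correct and matches the paper's approach: the paper simply states that this proposition is a consequence of Proposition~\ref{pqj} without writing out any details, and you have supplied exactly those details (the reduction via the uniformization to $\tau'\in\tilD$, then the numerical deductions $744+330000\cdot0.001=1074\le1100$ and $1100<\tfrac12\cdot2500$). One minor imprecision: the proof of Proposition~\ref{pnecu} shows that the pull-back of $\Omega_c^+$ lies in $\Delta_c=\bigcup_{\gamma:\gamma(c_\infty)=c}\gamma(\tilD)$, not in a single $\gamma(\tilD)$; but since $q_c$ is independent of which such $\gamma$ is used, this does not affect your argument.
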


\section{Modular Units}
\label{smuni}

In this section we  recall the construction of modular units on the modular curve $X_G$. By 
a \emph{modular unit} we mean a rational function on~$X_G$ having poles and zeros only 
at the cusps. 

\subsection{Integrality of Siegel's Function}

For ${\bfa\in \Q^2\setminus\Z^2}$ Siegel's function~$g_\bfa$ (see Subsection~\ref{sssieg}) is algebraic over the field $\C(j)$: this follows from the fact that $g_\bfa^{12}$ is automorphic of level $2N^2$ \cite[page~29]{KL81}. Since~$g_\bfa$ is holomorphic and does not vanish on the upper half-plane~$\HH$, both~$g_\bfa$ and $g_\bfa^{-1}$  must be integral over the ring $\C[j]$. Actually, a stronger assertion holds.

\begin{proposition}
\label{psiu}
Both~$g_\bfa$ and   ${\left(1-\zeta_N\right)g_\bfa^{-1}}$ are integral over  $\Z[j]$. Here~$N$ is the exact order of~$\bfa$ in $(\Q/\Z)^2$ and~$\zeta_N$ is a primitive $N$-th root of unity. 
\end{proposition}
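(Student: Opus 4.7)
My plan is to follow the classical Kubert--Lang strategy. Since $g_\bfa^{12N}$ is modular for $\Gamma(N)$, $g_\bfa$ lies in a finite Galois extension of $\Q(j)$, namely in a modular function field $\FF_M$ for some $M$ (e.g.\ $M=12N^2$). It therefore suffices to form the polynomial $P(X) = \prod_\sigma (X - \sigma(g_\bfa))$, where $\sigma$ runs through the $\gal(\FF_M/\Q(j))$-orbit of $g_\bfa$, and to show $P(X) \in \Z[j][X]$; analogous argument will be applied to $(1-\zeta_N)g_\bfa^{-1}$.

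The first step is to locate the coefficients of $P$ in $\bar\Q[j]$. By~(\ref{eperga}) and~(\ref{eaa'}), the conjugates $\sigma(g_\bfa)$ are roots of unity times Siegel functions $g_{\bfa'}$ for $\bfa'$ ranging over a finite subset of $(N^{-1}\Z/\Z)^2\setminus\{0\}$. Every such conjugate is holomorphic and non-vanishing on $\HH$, so each elementary symmetric function is a holomorphic $\SL_2(\Z)$-invariant function on $\HH$, meromorphic at the cusp, and therefore a polynomial in~$j$ with coefficients in $\bar\Q$.

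To descend these coefficients to $\Z$, I would work with $q_\tau$-expansions. The product formula~(\ref{epga}) exhibits the $q_\tau^{1/M}$-expansion of each $g_{\bfa'}$ with coefficients in $\Z[\zeta_M]$; the $\SL_2(\Z)$-invariance of the symmetric functions kills the fractional $q_\tau$-powers, and the action of $\gal(\Q(\zeta_M)/\Q)$ (which permutes the conjugates inside $\FF_M$) forces the $q_\tau$-expansion coefficients into~$\Z$. Since a polynomial in~$j$ whose $q_\tau$-expansion lies in $\Z((q_\tau))$ is automatically in $\Z[j]$, this yields $P(X)\in\Z[j][X]$ and the integrality of $g_\bfa$ over $\Z[j]$.

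For $(1-\zeta_N)g_\bfa^{-1}$ the same blueprint applies: its conjugates are still holomorphic and non-vanishing on $\HH$ (as $g_\bfa$ never vanishes there), so the coefficients of its minimal polynomial are polynomials in~$j$ over $\bar\Q$. The delicate point is the $q_\tau$-expansion. When $a_1\notin\Z$, inverting the product formula~(\ref{epga}) is harmless and yields a $q_\tau^{1/M}$-series already in $\Z[\zeta_M]$. When $a_1\in\Z$, however, $g_\bfa$ has leading factor $(1-e(a_2))$, so $g_\bfa^{-1}$ acquires the cyclotomic denominator $(1-e(a_2))^{-1}$; one must verify that the single normalizing factor $(1-\zeta_N)$, together with the joint action of $\SL_2(\Z/N\Z)$ and the cyclotomic Galois group on $(N^{-1}\Z/\Z)^2$, clears all such denominators uniformly once elementary symmetric functions are formed. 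This cyclotomic bookkeeping---essentially a distribution relation between $(1-\zeta_N)$ and the units $(1-e(a_2))$ across the Galois orbit---is the main obstacle; once settled, the argument closes exactly as in the first case.
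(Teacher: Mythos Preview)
Your overall strategy coincides with the paper's: form the product over the Galois orbit (equivalently, over $\Gamma(1)/\Gamma(M)$), and use that the resulting $\Gamma(1)$-invariant symmetric functions have algebraic integral $q$-expansions, hence lie in $\Z[j]$. The first half, for $g_\bfa$ itself, is fine.

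The gap is in the second half, and it is smaller than you think. You flag as the ``main obstacle'' a supposed distribution relation needed so that the single factor $(1-\zeta_N)$ clears all the cyclotomic denominators $(1-e(a_2'))$ arising across the orbit once symmetric functions are formed. No such global cancellation is needed. The point is that it suffices to check that \emph{each individual conjugate} of $(1-\zeta_N)g_\bfa^{-1}$ already has algebraic integral $q$-expansion; the symmetric functions then inherit this automatically. Now a conjugate is, up to a root of unity, $(1-\zeta_N^{k})\,g_{\bfa'}^{-1}$ with $\bfa'$ of the \emph{same exact order}~$N$ as $\bfa$ (Galois and $\SL_2$ actions preserve the order). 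If the first coordinate of $\bfa'$ lies in~$\Z$, then the second coordinate must have exact order~$N$ in $\Q/\Z$, so $1-e(a_2')$ is a primitive $N$-th cyclotomic factor and hence an associate (unit multiple) of $1-\zeta_N$, which in turn is an associate of $1-\zeta_N^{k}$. Thus $(1-\zeta_N^{k})g_{\bfa'}^{-1}$ has algebraic integral $q$-expansion outright. If the first coordinate of $\bfa'$ is not in~$\Z$, the leading coefficient of $g_{\bfa'}$ is already a unit and there is nothing to clear. Either way, no bookkeeping across the orbit is required, and the proof closes immediately. This is exactly how the paper proceeds (applying the lemma to the $12$th power to land in a genuine automorphic function of finite level).
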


This  is, essentially, established in~\cite{KL81}, but is not stated explicitly therein. Therefore we briefly indicate the proof here. 
Recall that a holomorphic and $\Gamma(N)$-automorphic function ${f:\HH\to \C}$  admits the \emph{infinite $q$-expansion}
\begin{equation}
\label{eexpan}
f(\tau)= \sum_{k\in \Z}a_kq^{k/N},
\end{equation}
where ${q=q_\tau=e^{2\pi i\tau}}$. 
We call the $q$-series~(\ref{eexpan}) \emph{algebraic integral} if the following two conditions are satisfied: the negative part of~(\ref{eexpan}) has only finitely many terms 
(that is, ${a_k=0}$ for large negative~$k$), and the  coefficients~$a_k$ are algebraic 
integers. Algebraic integral $q$-series form a ring. The invertible elements of this ring are 
$q$-series with invertible leading coefficient. By the \emph{leading coefficient} of an 
algebraic integral $q$-series we mean~$a_m$, where  ${m\in \Z}$ is defined by ${a_m\ne 
0}$, but ${a_k=0}$ for ${k<m}$.

\begin{lemma}
\label{lint}
Let~$f$ be a $\Gamma(N)$-automorphic function  such that for every ${\gamma \in \Gamma
(1)}$ the $q$-expansion of ${f\circ \gamma}$ is algebraic integral. Then~$f$ is integral 
over $\Z[j]$. 
\end{lemma}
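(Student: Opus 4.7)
The plan is to construct an explicit monic polynomial satisfied by $f$ whose coefficients lie in $\overline{\Z}[j]$, and then conclude by transitivity of integrality over $\Z[j]$. Since $\Gamma(N)$ is normal of finite index in $\Gamma(1)$, the function $f\circ\gamma$ depends only on the coset $\gamma\Gamma(N)$, so I form
\begin{equation*}
P(T) := \prod_{\gamma\in\Gamma(1)/\Gamma(N)} \bigl(T-f\circ\gamma\bigr),
\end{equation*}
a monic polynomial in $T$ with $P(f)=0$ whose coefficients are meromorphic functions on $\HH$. These coefficients are $\Gamma(1)$-invariant (the action of $\Gamma(1)$ by right translation merely permutes the factors) and holomorphic on $\HH$ (since each $f\circ\gamma$ is).

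Next I argue that each coefficient $c_i$ of $P$ has an algebraic integral $q$-expansion at $i\infty$. Indeed, $c_i$ is a symmetric polynomial in the $f\circ\gamma$, so its $q^{1/N}$-expansion at $i\infty$ is obtained by applying that symmetric polynomial to the (by hypothesis) algebraic integral $q^{1/N}$-expansions of the $f\circ\gamma$. Being $\Gamma(1)$-invariant, $c_i$ is furthermore periodic of period~$1$, so only integer powers of $q$ can actually appear, and $c_i$ admits an algebraic integral $q$-expansion $\sum_{k\ge -m}a_k q^k$.

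I now show $c_i\in\overline{\Z}[j]$ by induction on the pole order $m$. Because $j=q^{-1}+744+\cdots$ has integer $q$-expansion, the function $c_i-a_{-m}j^m$ is again $\Gamma(1)$-invariant, holomorphic on~$\HH$, with an algebraic integral $q$-expansion whose pole at $i\infty$ has order strictly less than $m$. When the pole order drops to zero, the remainder is a $\Gamma(1)$-invariant holomorphic function on $\HH$ bounded at $i\infty$; as a holomorphic function on the compact Riemann surface $X(1)$ it is constant, and this constant equals the algebraic-integer constant term of its $q$-expansion. Hence every coefficient of $P$ lies in $\overline{\Z}[j]$.

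Finally, $\overline{\Z}[j]$ is integral over $\Z[j]$ since $\overline{\Z}$ is integral over $\Z$; as $f$ satisfies a monic polynomial over $\overline{\Z}[j]$, transitivity of integrality gives that $f$ is integral over $\Z[j]$. The main delicate point is the inductive step: one must check that subtracting $a_{-m}j^m$ simultaneously preserves $\Gamma(1)$-invariance, holomorphy on $\HH$, and algebraic integrality of the $q$-expansion, while strictly decreasing the pole order; the rest is formal bookkeeping.
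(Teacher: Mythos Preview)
Your proof is correct and follows essentially the same approach as the paper: form the product $\prod(T-f\circ\gamma)$, observe that its coefficients are $\Gamma(1)$-invariant with algebraic integral $q$-expansions, deduce they lie in $\overline{\Z}[j]$, and conclude by transitivity. The only difference is cosmetic: the paper takes the product over the \emph{distinct} functions $f\circ\gamma$ and invokes the ``$q$-expansion principle'' to pass from algebraic integral $q$-expansion to membership in $\overline{\Z}[j]$, whereas you take the product over all of $\Gamma(1)/\Gamma(N)$ (possibly with repeated factors, which is harmless) and spell out the $q$-expansion principle explicitly via the pole-order induction subtracting $a_{-m}j^m$.
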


\begin{proof}
This is, essentially, Lemma~2.1 from \cite[Section~2.2]{KL81}. Since~$f$ is 
$\Gamma(N)$-automorphic, the set ${\{f\circ\gamma :\gamma\in \Gamma(1)\}}$ is finite. 
The coefficients of the polynomial ${F(T)=\prod(T-f\circ\gamma)}$ (where the product is 
taken over the finite set above)  are $\Gamma(1)$-automorphic functions  with algebraic 
integral $q$-expansions. By the $q$-expansion principle, the coefficients of $F(T)$ belong to 
$\bar\Z[j]$, where~$\bar\Z$ is the ring of all algebraic integers. It follows that~$f$ is 
integral over~$\bar\Z[j]$, hence over $\Z[j]$. \qed
\end{proof}

\paragraph*{Proof of Proposition~\ref{psiu}}
The function $g_\bfa^{12}$ is automorphic of level $2N^2$  and its 
$q$-expansion is algebraic integral (as one can easily see by transforming the infinite 
product~(\ref{epga}) into an infinite series).  By~(\ref{eperga}), the same is true for for 
every ${(g_\bfa\circ\gamma)^{12}}$. Lemma~\ref{lint} now implies that $g_\bfa^{12}$ is integral 
over $\Z[j]$, and so is~$g_\bfa$. 

Further, the  $q$-expansion of~$g_\bfa$ is invertible if ${a_1\notin \Z}$ and is 
${1-e(a_2)}$ times an invertible $q$-series if ${a_1\in \Z}$. Hence the $q$-expansion 
of~$g_\bfa^{-1}$ is algebraic integral when ${a_1\notin \Z}$, and if ${a_1\in \Z}$ the 
same is true for ${\left(1-e(a_2)\right)g_\bfa^{-1}}$. In the latter case~$N$ is the exact 
order of~$a_2$ in $\Q/\Z$, which implies that ${(1-\zeta_N)/\left(1-e(a_2)\right)}$ is 
an algebraic unit. Hence, in any case, ${(1-\zeta_N) g_\bfa^{-1}}$ has algebraic integral 
$q$-expansion, and the same is true with~$g_\bfa$ replaced by ${g_\bfa\circ\gamma}$ for
any ${\gamma\in \Gamma(1)}$ (we again use~(\ref{eperga}) and notice that~$\bfa$ and 
$\bfa\gamma$ have the same order in $(\Q/\Z)^2$). Applying Lemma~\ref{lint} to the function  ${\left((1-\zeta_N) g_\bfa^{-1}\right)^{12}}$,   we 
complete the proof.\qed

\subsection{Modular Units on $X(N)$}
\label{ssxn}

From now on, we fix an integer ${N>1}$. Recall that the curve $X(N)$ is defined over the 
field $\Q(\zeta_N)$. Moreover, the field ${\Q\bigl(X(N)\bigr)=\Q(\zeta_N)\bigl(X(N)\bigr)}$  is a Galois 
extension of $\Q(j)$, the Galois group being isomorphic to $\GL_2(\Z/N\Z)$. The 
isomorphism 
\begin{equation}
\label{eisog}
\gal\left(\left.\Q\bigl(X(N)\bigr)\right/\Q(j)\right)\cong \GL_2(\Z/N\Z)
\end{equation}
is defined up to an inner automorphism; once it is fixed, we have the well-defined 
isomorphisms 
\begin{equation}
\gal\left(\left.\Q\bigl(X(N)\bigr)\right/\Q(\zeta_N,j)\right)\cong
\SL_2(\Z/N\Z),\qquad
\label{eisoc}
\gal\bigl(\left.\Q(\zeta_N)\right/\Q\bigr)\cong (\Z/N\Z)^\times,
\end{equation}
and we may  identify the groups on the left and on the right in~(\ref{eisog} and~\ref{eisoc}). 
Our choice of the isomorphism~(\ref{eisog}) will be specified in Proposition~\ref{pua}.

According to Theorem~1.2 from \cite[Section~2.1]{KL81}, given ${\bfa=(a_1,a_2) \in
(N^{-1}\Z)^2\setminus \Z^2}$, the function $g_\bfa^{12N}$ is $\Gamma
(N)$-automorphic of weight~$0$. 
Hence $g_\bfa^{12N}$ defines a rational function on the modular curve $X(N)$, to be denoted  by~$u_\bfa$. Since the root of 
unity in~(\ref{eaa'}) is of order dividing $12N$, we have ${u_\bfa=u_{\bfa'}}$ when 
${\bfa\equiv \bfa'\mod \Z^2}$. Hence~$u_\bfa$ is well-defined when~$\bfa$ is a 
non-zero element of the abelian group ${(N^{-1}\Z/\Z)^2}$, which will be assumed in the 
sequel. We put
${\bfA=(N^{-1}\Z/\Z)^2\setminus\{0\}}$.

The functions~$u_\bfa$ have the following properties. 

\begin{proposition}
\label{pua}
\begin{enumerate}
\item
\label{iintmun}
The functions~$u_\bfa$ and ${(1-\zeta_{N_\bfa})^{12N}u_\bfa^{-1}}$ are integral 
over $\Z[j]$, where~$N_\bfa$ is the exact order of~$\bfa$ in ${(N^{-1}\Z/\Z)^2}$. 
In particular,~$u_\bfa$ has zeros and poles only at the cusps of $X(N)$.

\item
\label{ifrick}
The functions~$u_\bfa$ belong to the field ${\Q\bigl(X(N)\bigr)}$,
and the Galois action on the set $\{u_\bfa\}$ over $\Q(j)$ is compatible with the (right) 
linear action of $\GL_2(\Z/N\Z)$ on~$\bfA$  in the following sense: the 
isomorphism~(\ref{eisog}) can be chosen so that for any 
${\sigma\in \gal\Bigl(\left.\Q\bigl(X(N)\bigr)\right/\Q(j)\Bigr)=\GL_2
(\Z/N\Z)}$
and any ${\bfa\in \bfA}$  we have 
${u_\bfa^\sigma=u_{\bfa\sigma}}$.

\item
\label{iord}
For the cusp~$c_\infty$ at infinity we have ${\ord_{c_\infty}u_\bfa=12N^2
\ell_\bfa}$, where~$\ell_\bfa$ is defined in Subsection~\ref{ssnota}. For an arbitrary cusp~$c$ we have
${\left|\ord_cu_\bfa\right|\le N^2}$.

\end{enumerate}

\end{proposition}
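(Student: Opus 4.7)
The plan is to address the three items in turn, exploiting what has already been established about Siegel functions in Section~\ref{smuni} and Subsection~\ref{sssieg}.

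For part~(\ref{iintmun}), I would simply read off the statement from Proposition~\ref{psiu}. Since $u_\bfa=g_\bfa^{12N}$, the integrality of $g_\bfa$ over $\Z[j]$ yields that of $u_\bfa$ immediately; applying Proposition~\ref{psiu} with $N_\bfa$ (the exact order of $\bfa$) in place of $N$, the function $(1-\zeta_{N_\bfa})g_\bfa^{-1}$ is integral over $\Z[j]$, so raising to the $12N$-th power gives $(1-\zeta_{N_\bfa})^{12N}u_\bfa^{-1}$ integral over $\Z[j]$. The absence of zeros and poles of $g_\bfa$ on $\HH$ then forces the divisor of $u_\bfa$ to be supported on the cusps.

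For part~(\ref{ifrick}), the function $u_\bfa$ is $\Gamma(N)$-automorphic of weight~$0$ by the cited Theorem~1.2 of \cite{KL81}, and its $q$-expansion obtained from~(\ref{epga}) has coefficients in $\Q(\zeta_N)$, so $u_\bfa\in\Q\bigl(X(N)\bigr)$. To pin down the Galois action I would set up the isomorphism~(\ref{eisog}) in two pieces. The $\SL_2(\Z/N\Z)$ piece comes straight from~(\ref{eperga}): for $\gamma\in\SL_2(\Z)$, $g_\bfa\circ\gamma=g_{\bfa\gamma}$ times a root of unity of order dividing $12N$, which is killed by the $12N$-th power, giving $u_\bfa\circ\gamma=u_{\bfa\gamma}$. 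The cyclotomic piece I would read off the $q$-expansion: the only $\zeta_N$-entries come through $e(a_2)$, so $\sigma_d\in\gal(\Q(\zeta_N)/\Q)$ sends $u_\bfa$ to $u_{(a_1,da_2)}$, which is precisely the action of $\bigl(\begin{smallmatrix}1&0\\0&d\end{smallmatrix}\bigr)\in\GL_2(\Z/N\Z)$. Combining these two compatible actions fixes~(\ref{eisog}) uniquely (up to inner automorphism) and makes $\sigma\mapsto u_{\bfa\sigma}$ the $\gal$-action on $u_\bfa$ for every $\sigma\in\GL_2(\Z/N\Z)$.

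For part~(\ref{iord}), the key input is $\ord_{q_\tau=0}g_\bfa=\ell_\bfa$, as recalled in Subsection~\ref{sssieg}. Because the uniformizer at $c_\infty$ on $X(N)$ is $q_\tau^{1/N}$, the order of $g_\bfa$ at $c_\infty$ viewed as a rational function on $X(N)$ is $N\ell_\bfa$, and hence $\ord_{c_\infty}u_\bfa=12N\cdot N\ell_\bfa=12N^2\ell_\bfa$. For a general cusp $c=\gamma(c_\infty)$ with $\gamma\in\Gamma(1)$, the calculation in part~(\ref{ifrick}) gives $\ord_c u_\bfa=\ord_{c_\infty}(u_\bfa\circ\gamma)=\ord_{c_\infty}u_{\bfa\gamma}=12N^2\ell_{\bfa\gamma}$, and the universal bound $|\ell_\bfa|\le 1/12$ from Subsection~\ref{ssnota} yields $|\ord_c u_\bfa|\le N^2$.

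Of the three items, the only one requiring genuine care is~(\ref{ifrick}), where the subtlety is not in computing the transformation rules (which follow from~(\ref{eperga}) and the explicit product~(\ref{epga})) but in matching them coherently with the abstract isomorphism~(\ref{eisog}); all other steps are direct consequences of Proposition~\ref{psiu} and the results quoted from \cite{KL81}.
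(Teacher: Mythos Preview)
Your proof is correct and follows essentially the same approach as the paper. The paper disposes of item~(\ref{iintmun}) by invoking Proposition~\ref{psiu}, of item~(\ref{ifrick}) by a bare citation to \cite[Chapter~2, Proposition~1.3]{KL81}, and gives for item~(\ref{iord}) an argument identical to yours (ramification index~$N$ at every cusp, then reduce an arbitrary cusp to~$c_\infty$ via the Galois action). Your treatment of~(\ref{ifrick}) is more explicit than the paper's---you actually sketch the Fricke-type argument (the $\SL_2$ action via~(\ref{eperga}) and the cyclotomic action on $q$-coefficients via the diagonal matrix $\bigl(\begin{smallmatrix}1&0\\0&d\end{smallmatrix}\bigr)$)---but this is exactly the content of the cited Kubert--Lang proposition, so the two proofs coincide in substance.
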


\begin{proof}
Item~(\ref{iintmun}) follows from Proposition~\ref{psiu}. 
Item~(\ref{ifrick}) is Proposition~1.3 from 
\cite[Chapter~2]{KL81}. 
We are left with item~(\ref{iord}). The order of vanishing of~$u_\bfa$ at $i\infty$ is 
$12N\ell_\bfa$. Since the ramification index of the covering ${X(N)\to X(1)}$ at every 
cusp is~$N$, we obtain ${\ord_{c_\infty}u_\bfa=12N^2\ell_\bfa}$. Since
${|\ell_\bfa|\le 1/12}$, we have ${\left|\ord_{c_\infty}u_\bfa
\right|\le N^2}$. The case of arbitrary~$c$ reduces to the case ${c=c_\infty}$ upon 
replacing~$\bfa$ by $\bfa\sigma$ where  ${\sigma\in \GL_2(\Z/N\Z)}$ is such that 
${\sigma(c)=c_\infty}$.    \qed
\end{proof}

\medskip

The group generated by the principal divisors ${(u_\bfa)}$, where ${\bfa\in \bfA}$, is 
contained in  the group of cuspidal divisors on~$X(N)$ (that is, the divisors  supported at the
set ${\CC(N)=\CC(\Gamma(N))}$ of cusps). Since principal divisors are of degree~$0$, the
rank of the former group is at most ${|\CC(N)|-1}$. It is fundamental for us that this rank is 
indeed maximal possible.  The following proposition is Theorem~3.1 in \cite[Chapter~2]{KL81}.

\begin{proposition}
\label{pmdrxn}
The  group generated by the set
${\left\{(u_\bfa):\bfa\in \bfA\right\}}$ is of rank
${|\CC(N)|-1}$. \qed
\end{proposition}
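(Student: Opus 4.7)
Since each $(u_\bfa)$ is a principal divisor on~$X(N)$, it has degree zero, and so the subgroup they generate has rank at most $|\CC(N)|-1$. The real content is the reverse inequality: after tensoring with~$\Q$, the $(u_\bfa)$'s must span the whole $(|\CC(N)|-1)$-dimensional degree-zero subspace $V\subset\Q[\CC(N)]$. My plan is to reduce this to a single non-vanishing assertion about Bernoulli values, using the $\GL_2(\Z/N\Z)$-equivariance supplied by Proposition~\ref{pua}(\ref{ifrick}).

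Set $\Gamma_0=\GL_2(\Z/N\Z)$ and consider the $\Gamma_0$-equivariant map
$\Phi\colon\Q[\bfA]\to V$, $\bfa\mapsto(u_\bfa)$. Decomposing both source and target into $\Q[\Gamma_0]$-isotypic components and applying Maschke together with Schur's lemma, $\Phi$ is surjective if and only if every irreducible $\Q[\Gamma_0]$-submodule of~$V$ meets $\Phi(\Q[\bfA])$ nontrivially. Dualising, it suffices to show that for every $0\ne\lambda\in V^\vee$ the composition $\lambda\circ\Phi$ is a nonzero functional on~$\Q[\bfA]$. By Proposition~\ref{pua}(\ref{iord}) and equivariance, $\ord_c(u_\bfa)=12N^2\,\ell_{\bfa\sigma_c}$ for some $\sigma_c\in\Gamma_0$ sending $c_\infty$ to~$c$; hence $\lambda\circ\Phi(\bfa)=12N^2\sum_c\lambda_c\,\ell_{\bfa\sigma_c}$, a $\Q$-linear combination of $\Gamma_0$-translates of the Bernoulli function $\bfa\mapsto\ell_\bfa=B_2(\widetilde{a}_1)$ on~$\bfA$.

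\textbf{Main obstacle.} The claim therefore reduces to showing that the $\Q[\Gamma_0]$-submodule of $\Q[\bfA]^\vee$ generated by~$\ell$ pairs nontrivially with every isotypic constituent of~$V^\vee$; equivalently, that the partial Bernoulli sum $\sum_{\bfa\in\bfA}\chi(\bfa)\,\ell_\bfa$ does not vanish for any irreducible character~$\chi$ of~$\Gamma_0$ occurring in~$V$. This is the substance of Kubert--Lang's Theorem~3.1 of Chapter~2, and is a non-vanishing statement of the same flavour as $L(2,\chi)\ne 0$ for nontrivial even Dirichlet characters. One natural route is via the Kronecker limit formula, which expresses $\log|g_\bfa|$ as a real-analytic Eisenstein series so that each Bernoulli moment becomes a special value of a non-vanishing $L$-function; an alternative is a direct argument on the Bernoulli distribution on~$(\Z/N\Z)^2/\{\pm1\}$ combined with the surjectivity of the evaluation map on primitive vectors. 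Once this non-vanishing is secured, the representation-theoretic reduction above immediately yields the full rank $|\CC(N)|-1$, completing the proof.
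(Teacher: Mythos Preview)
The paper does not prove this proposition at all; it simply records that the statement is Theorem~3.1 of \cite[Chapter~2]{KL81} and marks it with a \qed. Your proposal, after the representation-theoretic set-up, likewise arrives at the key non-vanishing of Bernoulli moments and then defers exactly to the same theorem of Kubert--Lang for that step, so in substance the two are identical: both rest on the cited result.

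Your added framing---viewing $\bfa\mapsto(u_\bfa)$ as a $\GL_2(\Z/N\Z)$-equivariant map and applying Maschke/Schur to reduce surjectivity to an isotypic non-vanishing---is a correct and illuminating way to describe \emph{what} Kubert--Lang's argument accomplishes, and it makes transparent why the question is equivalent to an $L$-value non-vanishing (via the second Bernoulli distribution or the Kronecker limit formula). But it is a reformulation rather than a reduction in difficulty: the ``main obstacle'' you isolate is essentially the full content of the proposition, and you do not supply an argument for it. One small slip: with the paper's conventions (proof of Proposition~\ref{pua}(\ref{iord})), one has $\ord_c u_\bfa=12N^2\ell_{\bfa\sigma}$ for $\sigma$ with $\sigma(c)=c_\infty$, i.e.\ $\sigma_c$ should send $c$ to $c_\infty$, not the reverse. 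This does not affect the logic. If you intend a self-contained proof, the missing ingredient is precisely the non-vanishing of the relevant Bernoulli sums on each isotypic piece of $V$; the routes you mention (Kronecker limit formula, or direct analysis of the Bernoulli distribution as in \cite{KL81}) are indeed the standard ones.
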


We also need to know the  behavior of the functions~$u_\bfa$ near the cusps, and estimate them in terms of the modular invariant~$j$. In the following proposition~$K$ is a number field containing~$\zeta_N$ and~$v$ is a valuation of~$K$, extended somehow to~$\bar K$. We use the notation of Section~\ref{snecu}.

\begin{proposition}
\label{ploc}
\begin{enumerate}
\item
Let~$c$ be a cusp of $X(N)$.
If ${v\in M_K^\infty}$ then  
\begin{align*}
\bigl|\log |u_\bfa(P)|_v- \ord_cu_\bfa\log|t_c(P)|_v\bigr| &\le 36N|q_c(P)|_v^{1/N} &&
\text{when  ${a_1\ne0}$,}\\
\bigl|\log |u_\bfa(P)|_v- \ord_cu_\bfa\log|t_c(P)|_v-12N\log|1-e(a_2)|_v\bigr| &\le 36N|q_c(P)|_v &&
\text{when ${a_1=0}$}
\end{align*}
for any ${P\in \Omega_{c,v}}$ such that ${|q_c(P)|_v<10^{-N}}$.
If ${v\in M_K^0}$ then 
$$
\log |u_\bfa(P)|_v=
\begin{cases} \ord_cu_\bfa\log|t_c(P)|_v  &
\text{when  ${a_1\ne0}$,}\\ 
 \ord_cu_\bfa\log|t_c(P)|_v+12N\log|1-e(a_2)|_v &
\text{when ${a_1=0}$}
\end{cases}
$$
for any  ${P\in \Omega_{c,v}}$.

\item
If ${v\in M_K^\infty}$ then  
$$
\bigl|\log |u_\bfa(P)|_v\bigr|\le N\log\bigl(|j(P)|_v+2200\bigr)+14N\log N
$$ 
for any ${P\in X(N)(K_v)}$. If ${v\in M_K^0}$ then  
${\bigl|\log |u_\bfa(P)|_v\bigr|\le N\log|j(P)|_v+\log R_v}$ for any ${P\in X(N)(K_v)}$ such that ${|j(P)|_v>1}$. 

\end{enumerate}
\end{proposition}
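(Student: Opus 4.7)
Both items follow from the Siegel-function estimates in Section \ref{sest} via the defining identity $u_\bfa = g_\bfa^{12N}$, which yields $\log|u_\bfa(P)|_v = 12N\log|g_\bfa(\tau)|_v$ for any $\tau \in \HH$ above $P$. The global bound of item (b) is direct, while item (a) requires first conjugating back to the cusp at infinity.

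For item (a), I would exploit the transitive action of $\SL_2(\Z/N\Z)$ on the cusps of $X(N)$: choose $\gamma \in \SL_2(\Z)$ whose class $\bar\gamma$ sends $c_\infty$ to $c$. The transformation rule \eqref{eperga} gives $|u_\bfa \circ \gamma|_v = |u_{\bfa\bar\gamma}|_v$, and with the convention $q_c = q_{c_\infty} \circ \gamma^{-1}$ (and the analogous relation for $t_c$) recalled in Section \ref{snecu}, the local parameters at $c$ and $c_\infty$ intertwine under $\gamma^{-1}$. Combined with the order identity $\ord_c u_\bfa = \ord_{c_\infty} u_{\bfa\bar\gamma} = 12 N^2 \ell_{\bfa\bar\gamma}$ from Proposition \ref{pua}(c), this reduces the claim to $c = c_\infty$, where $t_{c_\infty}(P) = q_\tau^{1/N}$ and $q_{c_\infty}(P) = q_\tau$. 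Then
\[
\ord_{c_\infty} u_\bfa \cdot \log|t_{c_\infty}(P)|_v = 12 N^2 \ell_\bfa \cdot \frac{1}{N}\log|q_\tau|_v = 12 N \ell_\bfa \log|q_\tau|_v,
\]
so the stated inequalities are precisely the $12N$-th multiples of Proposition \ref{psiar} (archimedean case, with $36N = 12N \cdot 3$) and of Proposition \ref{psinar} (non-archimedean, an exact identity), the $a_1 = 0$ correction $12N\log|1-e(a_2)|_v$ being carried along unchanged.

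For item (b), multiplying the archimedean estimate of Proposition \ref{pgalar} by $12N$ gives $|\log|u_\bfa(P)|_v| \le N\log(|j(P)|_v+2200) + 12 N\log N + 1.2 N$, and the excess is absorbed into the gap $14N\log N - 12N\log N = 2N\log N \ge 2N\log 2 > 1.2 N$ (valid for $N \ge 2$). For non-archimedean $v$ with $|j(P)|_v>1$, the point lies in a unique $\Omega_c$, and the exact identity of Proposition \ref{psinar}, multiplied by $12N$, yields $N\log|j(P)|_v$ plus a root-of-unity correction; the latter vanishes when $v(N) = 0$ and is controlled by $\log R_v$ when $v\mid p\mid N$, using the standard valuation $|1-\zeta_{p^k}|_v = p^{-1/(p^{k-1}(p-1))}$ for primitive $p^k$-th roots of unity together with the constraint $p^k \mid N$.

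The main technical subtlety is the compatibility check in item (a) between the uniformizers $t_c$ and $t_{c_\infty}$ (and the corresponding orders of vanishing) under the automorphism induced by $\gamma$; once that is granted, as it is by the conventions of Section \ref{snecu}, the rest of the proof is routine multiplication of Siegel-function estimates by $12N$ plus elementary bookkeeping.
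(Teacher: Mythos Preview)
Your proposal is correct and follows essentially the same route as the paper's own proof, which simply says the case $c=c_\infty$ is an immediate consequence of Propositions~\ref{psiar}, \ref{pgalar}, \ref{psinar} and Corollary~\ref{cnaglob} (noting $\log|q_c|_v = N\log|t_c|_v$), and that the general case reduces to $c=c_\infty$ by applying a suitable Galois automorphism. Your use of $\gamma\in\SL_2(\Z)$ to effect this reduction is exactly that automorphism, and your bookkeeping of the factor $12N$ and the numerical absorption $1.2N\le 2N\log N$ in item~(b) is the detail the paper leaves implicit.
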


\begin{proof}
When ${c=c_\infty}$ this is an immediate consequence of Propositions~\ref{psiar},~\ref{pgalar},~\ref{psinar} (notice that ${\log| q_c|v_=N\log |t_c|_v}$ for every cusp~$c$) and Corollary~\ref{cnaglob}. The general case reduces to the case ${c=c_\infty}$ by applying a suitable Galois automorphism. \qed
\end{proof}

\subsection{$K$-rational Modular Units on $X_G$}
\label{ssmunk}

Now let~$K$ be a number field, and let~$G$ be a subgroup of $\GL_2(\Z/N\Z)$. Let 
$\det G$ be the image of~$G$ under the determinant map ${\det \colon \GL_2(\Z/N\Z)
\to (\Z/N\Z)^\times=\gal(\Q(\zeta_N)/\Q)}$ (recall that we have a well-defined isomorphism~(\ref{eisoc})).
In the sequel we shall assume that 
${K\supseteq\Q(\zeta_N)^{\det G}}$,
where $\Q(\zeta_N)^{\det G}$ is the subfield of $\Q(\zeta_N)$ stable under $\det G$. 
This assumption implies that  the curve~$X_G$ is defined over~$K$. Then ${G':=\gal\left(\left.K\bigl(X(N)\bigr)\right/K\left(X_G \right)\right)}$
is a subgroup of~$G$. For every ${\bfa\in \bfA}$ we put
${w_\bfa= \prod_{\sigma\in G'}u_{\bfa \sigma}}$. 
Since ${u_{\bfa \sigma}=u_\bfa^\sigma}$, 
the functions~$w_\bfa$ are contained in $K(X_G)$.  They have the following properties. 

\begin{proposition}
\label{pwa}
\begin{enumerate}
\item
\label{imunk}
The functions~$w_\bfa$ have zeros and poles only at the cusps of $X_G$. If~$c$ is such a 
cusp, then ${\left|\ord_cw_\bfa\right|\le |G'|N^2}$. 
\item
\label{iintk}
For every ${\bfa \in \bfA}$ there exists an algebraic integer ${\lambda_\bfa \in \Z
[\zeta_N]}$, which is a product of $|G'|$ factors of the form ${\left(1-\zeta_{N'}
\right)^{12N}}$, where ${N'\mid N}$, such that the functions~$w_\bfa$ and 
${\lambda_\bfa w_\bfa^{-1}}$ are integral over~$\Z[j]$. 

\item
\label{icrk}
If ${v\in M_K^\infty}$  then  
$$
\bigl|\log |w_\bfa(P)|_v\bigr|\le |G'|N \log\bigl(|j(P)|_v+2200\bigr)+14|G'|N \log 
N
$$
for any ${P\in X_G(K_v)}$. 
If ${v\in M_K^0}$  then 
$$
\bigl|\log |w_\bfa(P)|_v\bigr|\le |G'|N\log|j(P)|_v+|G'|\log R_v
$$ 
for any ${P\in X(N)(K_v)}$ such that ${|j(P)|_v>1}$.

\item
\label{iloc}
For every ${\bfa \in \bfA}$ and every cusp~$c$ there exists an algebraic integer ${\beta=
\beta(\bfa,c)\in \Z[\zeta_N]}$, which is a product of at most $|G'|$ factors of the form 
${(1-e(a))^{12N}}$, where ${a\in N^{-1}\Z/\Z}$ and ${a\ne0}$, such that for any ${v\in 
M_K}$ and for any ${P\in \Omega_{c,v}}$ we have the following. If~$v$ is archimedean 
and ${|q_c(P)|_v\le 10^{-N}}$ then
$$
\bigl|\log\left|w_\bfa(P)\right|_v- \ord_cw_\bfa\log|t_c(P)|_v- \log|\beta|_v
\bigr| \le 36|G'|N |q_c(P)|_v^{1/N}.
$$
If~$v$ is non-archimedean then 
${\log\left|w_\bfa(P)\right|_v= \ord_c w_\bfa\log|t_c(P)|_v+ \log|\beta|_v}$.

\item
\label{imdk}
The group generated by the principal divisors $(w_\bfa)$ is of rank ${|\CC(G,K)|-1}$. 

\end{enumerate}
\end{proposition}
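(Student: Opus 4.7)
The plan is to derive every item from the corresponding results on $X(N)$ in Propositions~\ref{pua},~\ref{ploc} and~\ref{pmdrxn}, using $w_\bfa=\prod_{\sigma\in G'}u_{\bfa\sigma}$ together with the Galois-compatibility $u_\bfa^\sigma=u_{\bfa\sigma}$ of Proposition~\ref{pua}(\ref{ifrick}). Since $G'$ permutes the factors of $w_\bfa$, this product is $G'$-invariant and therefore descends to a function in $K(X_G)$.

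For~(\ref{imunk}), each $u_{\bfa\sigma}$ has zeros and poles only at cusps (Proposition~\ref{pua}(\ref{iintmun})), so the same holds for $w_\bfa$. Picking a cusp $\tilde c$ of $X(N)$ above a cusp $c$ of $X_G$ with ramification index $e_{\tilde c/c}\ge 1$, one has
\begin{equation*}
e_{\tilde c/c}\,\ord_c w_\bfa=\sum_{\sigma\in G'}\ord_{\tilde c}u_{\bfa\sigma},
\end{equation*}
and each summand is bounded by $N^2$ in absolute value by Proposition~\ref{pua}(\ref{iord}), yielding the claim. For~(\ref{iintk}), set $\lambda_\bfa:=\prod_{\sigma\in G'}(1-\zeta_{N_{\bfa\sigma}})^{12N}$, which has the prescribed shape since each $N_{\bfa\sigma}$ divides $N$; integrality of $w_\bfa$ and $\lambda_\bfa w_\bfa^{-1}$ over $\Z[j]$ then follows by multiplying the corresponding statements in Proposition~\ref{pua}(\ref{iintmun}).

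Items~(\ref{icrk}) and~(\ref{iloc}) are obtained by applying Proposition~\ref{ploc} to each factor $u_{\bfa\sigma}$ and summing the resulting inequalities over $\sigma\in G'$; the factor $|G'|$ in the bounds comes from the number of summands. In~(\ref{iloc}) a little care is needed to relate the local parameter $t_c$ on $X_G$ to $t_{\tilde c}$ on $X(N)$, which is handled by the norm construction of $t_c$ recalled in Section~\ref{snecu}. The algebraic integer
\begin{equation*}
\beta(\bfa,c):=\prod_{\substack{\sigma\in G'\\ (\bfa\sigma)_1\in\Z}}\bigl(1-e((\bfa\sigma)_2)\bigr)^{12N}
\end{equation*}
collects exactly the constant terms coming from those factors with integer first coordinate, and is by construction a product of at most $|G'|$ factors of the prescribed form.

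Item~(\ref{imdk}) is the main obstacle, since it requires transporting a rank statement through a quotient by $G'$ and a restriction to $K$-rational divisors. My approach is to pull back principal divisors along $\pi:X(N)\to X_G$: since $\pi^{\ast}$ is injective on divisor groups, the rank of $\langle(w_\bfa):\bfa\in\bfA\rangle$ on $X_G$ equals the rank of $\langle\pi^{\ast}(w_\bfa)\rangle=\langle\sum_{\sigma\in G'}(u_{\bfa\sigma})\rangle$ on $X(N)$. By Proposition~\ref{pmdrxn} the divisors $\{(u_\bfa):\bfa\in\bfA\}$ span (rationally) the entire group of degree-zero cuspidal divisors on $X(N)$, a group stable under the whole of $\GL_2(\Z/N\Z)$; taking $G'$-orbit sums lands in the $G'$-invariants, whose rank equals the number of $G'$-orbits of cusps of $X(N)$ minus one, namely $|\CC(G,\bar K)|-1$. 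Since the $\GL_2(\Z/N\Z)$-action also encodes the action of $\gal(\bar K/K)$ through the isomorphism~(\ref{eisoc}), and since the $(w_\bfa)$ are $K$-rational and thus $\gal(\bar K/K)$-invariant, a further averaging shows that the attained rank is exactly $|\CC(G,K)|-1$.
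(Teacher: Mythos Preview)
Your treatment of items~(\ref{imunk})--(\ref{iloc}) is correct and matches the paper's (very terse) argument: everything follows by summing the corresponding statements of Propositions~\ref{pua} and~\ref{ploc} over $\sigma\in G'$.

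For item~(\ref{imdk}), your overall strategy coincides with the paper's---pull back to $X(N)$, use that $\langle(u_\bfa)\rangle$ has finite index in the degree-zero cuspidal divisor group, and pass to $G'$-invariants---but there are two problems.

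First, the orbit count is wrong. You claim that the number of $G'$-orbits of cusps of $X(N)$ is $|\CC(G,\bar K)|$, but in fact it is $|\CC(G,K)|$. The point is that $G'=\gal\bigl(K(X(N))/K(X_G)\bigr)$ is \emph{not} the geometric covering group $\gal\bigl(\bar K(X(N))/\bar K(X_G)\bigr)$; it contains the latter as a normal subgroup with quotient $\gal\bigl(K(\zeta_N)/K\bigr)$, and this quotient is exactly what encodes the $\gal(\bar K/K)$-action on the cusps of $X_G$. Consequently the $G'$-orbits on the geometric cusps of $X(N)$ correspond bijectively to the places of $K(X_G)$ above $j=\infty$, i.e.\ to $\CC(G,K)$. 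With this correction your ``further averaging'' step is unnecessary (and as written it does nothing, since the $(w_\bfa)$ are already $\gal(\bar K/K)$-invariant).

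Second, you assert without justification that taking $G'$-orbit sums of a finite-index subgroup $B\subset A$ yields a subgroup of full rank in $A^{G'}$. This is true and easy---for $a\in A^{G'}$ one has $|G'|\cdot a=\sum_{\sigma}\sigma a\in A_{G'}$, and $[A:B]\cdot A_{G'}\subset B_{G'}$---but it deserves to be stated; the paper isolates it as Lemma~\ref{ltriv}. Once you correct the orbit count and insert this observation, your argument for~(\ref{imdk}) is complete and essentially identical to the paper's.
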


\begin{proof}
Items~(\ref{imunk}) and~(\ref{iintk})  follow from  Proposition~\ref{pua}, 
items~(\ref{icrk}) and~(\ref{iloc}) follow from Proposition~\ref{ploc}. Finally, 
item~(\ref{imdk}) follows from Proposition~\ref{pmdrxn} and Lemma~\ref{ltriv} 
below. On should apply the lemma (whose proof is left to the reader) with~$A$ as the group of degree~$0$ cuspidal divisors 
on $X(N)$, with~$B$ as the group of all cuspidal divisors on $X(N)$ generated by the 
principal divisors $(u_\bfa)$, and with~$G$ as~$G'$. \qed
\end{proof}

\begin{lemma}
\label{ltriv}
Let~$G$ be a finite group, and let~$A$ be a torsion-free finitely generated (left) $G$-module. 
For ${a\in A}$ put 
${a_G=\sum_{g\in G}ga}$ 
and denote by $A^G$  the submodule of the $G$-invariant elements. Further, let~$B$ be a 
finite index subgroup of~$A$. Then ${B_G=\{b_G:b\in B\}}$ is a finite index submodule 
of~$A^G$. \qed
\end{lemma}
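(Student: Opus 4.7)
The plan is to prove the lemma by comparing $A^G$ to the image of the norm (trace) map, and then pushing the finite-index hypothesis through this map.

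First I would introduce the norm map $N\colon A\to A^G$ defined by $N(a)=a_G=\sum_{g\in G}ga$. It is clearly $\Z$-linear (though not $G$-linear in any non-trivial sense, since $G$ acts trivially on the target), its image is exactly $A_G:=\{a_G:a\in A\}$, and by construction $N(B)=B_G$. Consequently $N$ induces a surjection of abelian groups $A/B\twoheadrightarrow A_G/B_G$, so
\[
[A_G:B_G]\le [A:B]<\infty.
\]
Thus it suffices to show that $A_G$ has finite index in $A^G$.

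For that, the key observation is that for any $a\in A^G$ one has $N(a)=|G|\,a$, so
\[
|G|\cdot A^G\subseteq N(A^G)\subseteq N(A)=A_G\subseteq A^G.
\]
Since $A$ is torsion-free and finitely generated over $\Z$, so is its subgroup $A^G$; hence $A^G\cong \Z^r$ for some $r\ge 0$, and the index $[A^G:|G|\cdot A^G]=|G|^r$ is finite. A fortiori $[A^G:A_G]$ is finite. Combining with the previous paragraph,
\[
[A^G:B_G]=[A^G:A_G]\cdot[A_G:B_G]<\infty,
\]
which is the required statement.

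There is no real obstacle here; the only point worth care is that the argument uses torsion-freeness of $A$ only in order to know that $A^G$ has finite rank with $[A^G:|G|A^G]$ finite, which would fail for an arbitrary finitely generated abelian $G$-module (the torsion part of $A^G$ would have to be controlled separately, but the hypothesis rules this out). Everything else is a formal diagram chase with the norm map.
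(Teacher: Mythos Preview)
Your proof is correct and follows essentially the same approach the paper intends (the paper leaves the proof to the reader, but the argument it has in mind is precisely the two-step one you give): first bound $[A_G:B_G]$ using the finite index of $B$ in $A$ via the norm map, then bound $[A^G:A_G]$ using $|G|\,A^G=(A^G)_G\subseteq A_G$.
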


\subsection{A Unit Vanishing at the Given Cusps}

Item~(\ref{imdk}) of Proposition~\ref{pwa} implies that for any proper subset of 
$\CC(G,K)$ there is a $K$-rational unit on~$X_G$  vanishing at this subset. In this 
subsection we give a quantitative version of this fact. 
We shall use the following simple lemma, where we denote by $\|\cdot\|_1$ the $\ell_1$-norm.

\begin{lemma}
\label{llia}
Let~$M$ be an  ${s\times t}$ matrix of rank~$s$ with entries in~$\Z$ . Assume that the 
entries of~$M$ do not exceed~$A$ in absolute value. Then there exists a vector  ${\bfb\in 
\Z^t}$ such that ${\|\bfb
\|_1\le s^{s/2+1}A^{s-1}}$,  and such that all the~$s$ coordinates of the vector $M\bfb$ 
(in the standard basis) are strictly positive. 
\end{lemma}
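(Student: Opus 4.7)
}

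The plan is to reduce to an $s\times s$ invertible subproblem via a nonsingular submatrix, then solve a suitable integral linear system by Cramer's rule and control its size via Hadamard's inequality.

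First I would pick $s$ columns of~$M$ that are linearly independent; they exist since ${\mathrm{rank}(M)=s}$. Let $M'\in\Z^{s\times s}$ denote the corresponding submatrix; it is invertible, and every entry still satisfies $|m'_{ij}|\le A$. It now suffices to produce $\bfb'\in \Z^s$ with $M'\bfb'$ componentwise positive and $\|\bfb'\|_1\le s^{s/2+1}A^{s-1}$, because padding $\bfb'$ with zeros in the remaining ${t-s}$ coordinates yields a vector $\bfb\in\Z^t$ with $M\bfb=M'\bfb'$.

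Next I would set ${D=|\det M'|>0}$ and solve the system ${M'\bfb'=D\cdot(1,\ldots,1)^T}$. By Cramer's rule,
\[
b'_i=\frac{\det(N_i)}{\det(M')},
\]
where~$N_i$ is obtained from~$M'$ by replacing its $i$-th column with $D\cdot(1,\ldots,1)^T$. Pulling the factor~$D$ out of that column and using ${D=\mathrm{sign}(\det M')\cdot\det M'}$ gives
\[
b'_i=\mathrm{sign}(\det M')\cdot\det(\widetilde N_i),
\]
where~$\widetilde N_i$ is $M'$ with its $i$-th column replaced by the all-ones vector. In particular ${b'_i\in \Z}$, and by construction ${M'\bfb'=D(1,\ldots,1)^T}$ has all coordinates strictly positive.

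Finally I would estimate $\|\bfb'\|_1$ via Hadamard's inequality applied to $\widetilde N_i$: the substituted column has Euclidean norm $\sqrt s$, and each of the remaining ${s-1}$ columns has Euclidean norm at most $\sqrt s\,A$, so
\[
|b'_i|=|\det(\widetilde N_i)|\le \sqrt s\cdot\bigl(\sqrt s\,A\bigr)^{s-1}= s^{s/2}A^{s-1}.
\]
Summing over the~$s$ coordinates yields ${\|\bfb'\|_1\le s^{s/2+1}A^{s-1}}$, as required. There is no real obstacle here: the only point to watch is that the Hadamard estimate be applied to the matrix~$\widetilde N_i$ with one $\mathbf{1}$-column (gaining a factor $\sqrt s$ over the naive bound $s^{(s+1)/2}A^{s-1}$), which is what produces the exponent $s/2+1$ rather than $s/2+3/2$.
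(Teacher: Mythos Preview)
Your argument is correct and is essentially identical to the paper's own proof: restrict to a nonsingular $s\times s$ submatrix, solve $M'\bfb'=|\det M'|\cdot(1,\ldots,1)^T$ by Cramer's rule, and bound each $|b'_i|$ by $\sqrt s\,(\sqrt s\,A)^{s-1}$ via Hadamard's inequality. The only cosmetic difference is that the paper phrases the Cramer step in terms of replacing a row rather than a column.
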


\begin{proof}
Assume first that ${s=t}$. Let~$d$ be the determinant of~$M$. Then the column vector 
${(|d|, \ldots,|d|)}$ can be written as $M\bfb$, where ${\bfb=(b_1, \ldots, b_s)}$ 
with~$b_k$ being (up to the sign) the determinant of the matrix obtained from~$M$ upon 
replacing the $k$-th row by ${(1, \dots,1)}$. Using Hadamard's inequality, we bound 
$|b_k|$ by $\sqrt s\left(\sqrt s A\right)^{s-1}$.  This proves the lemma in the case
${s=t}$. The general case reduces to the case ${s=t}$ by selecting a non-singular ${s\times 
s}$ sub-matrix, which gives~$s$ entries of the vector~$\bfb$; the remaing ${t-s}$ entries are
set to be~$0$.  \qed
\end{proof}

\medskip

Now let~$G$,~$K$ and~$G'$ be as in Subsection~\ref{ssmunk}.

\begin{proposition}
\label{pw}

\begin{sloppypar}
Let~$\Sigma$ be a proper subset of $\CC(G,K)$. Assume that ${|\Sigma|\le s}$, and put ${B=s^{s/2+1}\left(|G'|N^2\right)^{s-1}}$. 
Then there exists a $K$-rational modular unit~$w$ on~$X_G$ 
with the following properties.
\end{sloppypar}

\begin{enumerate}
\item If~$c$ is a cusp such that the orbit of~$c$ is in~$\Sigma$ then ${\ord_cw>0}$.

\item
\label{icuw}
For every cusp~$c$ we have 
\begin{equation}
\label{eordw}
\left|\ord_cw\right|\le B|G'|N^2.
\end{equation}

\item
\label{ilamb} There exists an algebraic integer~$\lambda$, which is a product of at most 
$|G'|B$ factors of the form ${\left(1-\zeta_{N'}\right)^{12N}}$, where ${N'\mid N}$, 
such that $\lambda w$ is integral over $\Z[j]$.  

\item
\label{icrw}
If ${v\in M_K^\infty}$  then for any ${P\in X_G(K_v)}$ we have  
$$
\bigl|\log |w(P)|_v\bigr|\le B|G'|N \log\bigl(|j(P)|_v+2200\bigr)+14B|G'|N \log N.
$$
If ${v\in M_K^0}$ then for any ${P\in X(N)(K_v)}$ such that ${|j(P)|_v>1}$ we 
have  
$$
\bigl|\log |w(P)|_v\bigr|\le B|G'|N\log|j(P)|_v+B|G'|\log R_v. 
$$

\item
\label{iword}
For every cusp~$c$ there exists an algebraic number ${\beta=\beta(c)\in \Z[\zeta_N]}$
which is a product of at most $|G'|B$ factors of the form ${(1-e(a))^{\pm12N}}$, where ${a
\in N^{-1}\Z/\Z}$ and ${a\ne0}$, such that for any ${v\in M_K}$ and for any ${P\in
\Omega_{c,v}}$ we have the following. If~$v$ is archimedean and ${|q_c(P)|_v\le 
10^{-N}}$ then
$$
\bigl|\log\left|w(P)\right|_v- \ord_cw\log|t_c(P)|_v- \log|\beta|_v\bigr| \le 
36B|G'|N |q_c(P)|_v^{1/N}.
$$
If~$v$ is non-archimedean then 
${\log\left|w(P)\right|_v= \ord_c w\log|t_c(P)|_v+ \log|\beta|_v}$. 

\end{enumerate}

\end{proposition}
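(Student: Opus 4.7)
The plan is to build $w$ as a multiplicative combination $w = \prod_{\bfa} w_\bfa^{b_\bfa}$ of the units furnished by Proposition~\ref{pwa}, with small integer exponents chosen via Lemma~\ref{llia} so as to force strictly positive orders at cusps whose Galois orbit lies in $\Sigma$. First I record that for each cusp $c$ and each $\bfa\in\bfA$ the integer $\ord_c w_\bfa$ depends only on the $K$-orbit of $c$ (because $w_\bfa\in K(X_G)$ has Galois-invariant divisor) and satisfies $|\ord_c w_\bfa|\le |G'|N^2$ by Proposition~\ref{pwa}(\ref{imunk}). Picking one representative cusp $c_\sigma$ in each orbit $\sigma\in\Sigma$, I form the integer matrix $M$ with rows indexed by $\sigma\in\Sigma$ and columns by $\bfa\in\bfA$, whose entries are $\ord_{c_\sigma} w_\bfa$.

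The key observation is that $M$ has rank $|\Sigma|$. Indeed, Proposition~\ref{pwa}(\ref{imdk}) gives that the principal divisors $(w_\bfa)$ span a subgroup of rank $|\CC(G,K)|-1$ inside the group of $K$-rational degree-zero cuspidal divisors on $X_G$, which itself has rank $|\CC(G,K)|-1$; the kernel of the projection sending $D$ to $(\ord_{c_\sigma}D)_{\sigma\in\Sigma}$ restricted to this subgroup is contained in the degree-zero divisors supported on the orbits outside $\Sigma$, a group of rank $|\CC(G,K)|-|\Sigma|-1$ (non-negative precisely because $\Sigma$ is a \emph{proper} subset). A rank count then forces the image to have rank at least $|\Sigma|$. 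Hence Lemma~\ref{llia}, applied to any $|\Sigma|$ linearly independent rows of $M$ with $A=|G'|N^2$ and with $|\Sigma|\le s$, yields integer exponents $(b_\bfa)$ of $\ell_1$-norm at most $s^{s/2+1}(|G'|N^2)^{s-1}=B$ such that every entry of $M\bfb$ indexed by $\sigma\in\Sigma$ is strictly positive. I then define $w:=\prod_\bfa w_\bfa^{b_\bfa}$.

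Properties~(1) and~(\ref{icuw}) are immediate, the latter from $|\ord_c w|\le \|\bfb\|_1\cdot|G'|N^2\le B\,|G'|N^2$ at every cusp $c$. The remaining properties follow by applying the corresponding items of Proposition~\ref{pwa} to each factor and summing, using $\|\bfb\|_1\le B$. Specifically, for~(\ref{ilamb}) I take $\lambda = \prod_{b_\bfa<0}\lambda_\bfa^{|b_\bfa|}$, which is a product of at most $|G'|B$ factors of the prescribed form, and $\lambda w$ is integral over $\Z[j]$ by multiplicativity (each $w_\bfa^{b_\bfa}$ for $b_\bfa\ge 0$, and each $\lambda_\bfa^{|b_\bfa|}w_\bfa^{b_\bfa}$ for $b_\bfa<0$, is integral over $\Z[j]$ by Proposition~\ref{pwa}(\ref{iintk})). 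Properties~(\ref{icrw}) and~(\ref{iword}) follow from $\log|w|_v=\sum_\bfa b_\bfa\log|w_\bfa|_v$ together with the triangle inequality, the local constant in~(\ref{iword}) being $\beta(c)=\prod_\bfa\beta(\bfa,c)^{b_\bfa}$, which contains at most $|G'|B$ factors of the stated shape; the archimedean error term is scaled by $\|\bfb\|_1\le B$. The main obstacle is the rank argument justifying the use of Lemma~\ref{llia}; the rest is routine bookkeeping.
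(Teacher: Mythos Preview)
Your proof is correct and follows essentially the same route as the paper: form the integer matrix of orders $\ord_c w_\bfa$, show that the rows indexed by $\Sigma$ are independent, apply Lemma~\ref{llia} with $A=|G'|N^2$, and set $w=\prod_\bfa w_\bfa^{b_\bfa}$. The only cosmetic difference is in the rank step: the paper observes directly that the single linear relation among \emph{all} rows is the degree relation $\sum_c [K(c):K]\,\ord_c w_\bfa=0$, so any proper subset of rows is independent; you obtain the same conclusion by a rank--nullity count on the projection to $\Z^\Sigma$. Both arguments are equivalent, and your bookkeeping for items~(\ref{ilamb})--(\ref{iword}) is accurate.
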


\begin{proof}
The $K$-rational Galois orbit of a cusp~$c$ has ${[K(c):K]}$ elements. 
Fix a representative in every such orbit  and 
consider the ${|\CC(G,K)|\times|\bfA| }$ matrix $\left(\ord_c w_\bfa\right)$, 
where~$c$ runs over the set of selected representatives. According to item~(\ref{imdk}) of 
Proposition~\ref{pwa}, this matrix is of rank ${|\CC(G,K)|-1}$, and the only (up to 
proportionality) linear relation between the rows  is 
${\sum_c[K(c):K]\ord_cw_\bfa=0}$ for  every ${\bfa\in \bfA}$.
It follows that any proper subset of the rows of our matrix is linearly independent. In particular, if we select  the rows corresponding to the set~$\Sigma$, we get a sub-matrix  of rank~$|\Sigma|$. Applying to it Lemma~\ref{llia}, where we may take ${A=|G'|N^2}$ due to item~(\ref{imunk}) of Proposition~\ref{pwa}, we find integers~$b_\bfa$  such that ${\sum_{\bfa\in \bfA}|b_\bfa|\le  B}$ and such that the  function ${w=\prod_{\bfa\in \bfA}w_\bfa^{b_\bfa}}$ is as wanted. \qed
\end{proof}

\section{Proof of Theorem~\ref{tbo}}
\label{sproof}

We use the notation of Section~\ref{snecu}. We put 
${R_v'=50^N}$ for archimedean~$v$ and ${R_v'=R_v}$  for non-archimedean~$v$. Since ${N\ge 2}$, we have ${R_v'\ge R_v}$. 

We use the notation ${d_v=[K_v:\Q_v]}$ and ${d=[K:\Q]}$. We fix an extension of every ${v\in M_K}$ to~$\bar K$ and denote this extension by~$v$ as well.

We shall use the estimate 
\begin{equation}
\label{ecalrest}
\calR \le \sum_{p\mid N}\frac{\log p}{p-1}\le  \omega(N)\log 2\le \log N, 
\end{equation}
for the quantity~$\calR$, defined in~(\ref{ecalr}).
(Here~$\omega(N)$ is the number of prime divisors of~$N$.) Of course, much sharper 
estimates for~$\calR$  are possible as well, but~(\ref{ecalrest}) is plainly sufficient for us.

\subsection{The Runge Unit}
Fix ${P\in Y_G(\OO_S)}$. Let~$S_1$ consist of the places ${v\in M_K}$ such that ${|j(P)|_v>R_v'}$. Plainly, ${S_1\subset S}$. Since ${R_v'\ge R_v}$, Proposition~\ref{pnecu} applies to our~$P$ and every ${v\in S_1}$. Thus, 
for ${v\in S_1}$ let~$c_v$ be the $v$-nearest cusp to~$P$, and let~$\Sigma$ be the set of all $\gal(\bar K/K)$-orbits of cusps containing some of the~$c_v$. Then
${|\Sigma|\le |S_1|\le |S|}$,
and since ${|S|< |\CC(G,K)|}$ by the assumption,~$\Sigma$ is a proper subset of $\CC(G,K)$. Let~$w$ and~$B$ be as in Proposition~\ref{pw}, where we may put ${s=|S|}$. Then ${\ord_{c_v}w>0}$ for every ${v\in S_1}$, and the other statements of this proposition are satisfied.

Since~$w$ is a modular unit and~$P$ is not a cusp, we have ${w(P)\ne 0, \infty}$, and the product formula gives
${\sum_{v\in M_K}d_v\log |w(P)|_v=0}$.
We want to show that this is impossible when $\height(P)$ is too large.

\subsection{Partitioning the Places of~$K$}
We partition the set of places~$M_K$ into three pairwise disjoint subsets:
${M_K=S_1\cup S_2\cup S_3}$, where ${S_i\cap S_j =\varnothing}$  for ${i\ne j}$.
The set~$S_1$ is already defined. The set~$S_2$ consists of the \emph{archimedean} places not belonging to~$S_1$ and the \emph{non-archimedean} places~$v$ not belonging to~$S_1$ and such that ${|j(P)|_v>1}$. (Obviously, ${S_2\subset S}$.) Finally, the set~$S_3$ consists of the places not belonging to ${S_1\cup S_2}$; in other words, ${v\in S_3}$ if and only if~$v$ is non-archimedean and ${|j(P)|_v\le 1}$. 

We will estimate from above the three sums
${\Xi_i=\sum_{v\in S_i}d_v\log |w(P)|_v}$. 
We will show that ${\Xi_1 \le -N^{-1}d\height(P)+O(1)}$, where the $O(1)$-term is independent of~$P$ (it will be made explicit). Further, we will bound $\Xi_2$ and $\Xi_3$  independently of~$P$. Since 
\begin{equation}
\label{esss}
\Xi_1+\Xi_2+\Xi_3=0,
\end{equation}
(which is a different writing of the product formula), this would bound $\height(P)$.

\subsection{Estimating $\Xi_1$}
For ${v\in S_1}$ we have ${P\in \Omega_{c_v,v}}$,  we may apply item~(\ref{iword}) of Proposition~\ref{pw}. Since ${\ord_{c_v}w>0}$ and ${\log q_{c_v}(P)= e\log t_{c_v}(P)}$ with ${e\mid N}$, we have, for an  archimedean ${v\in S_1}$
\begin{align}
\log\left|w(P)\right|_v &\le  \frac{\ord_{c_v}w}e\log|q_{c_v}(P)|_v+ \log|\beta(c_v)|_v + 36B|G'|N |q_{c_v}(P)|_v^{1/N} \nonumber\\
\label{ewple}
&\le  -\frac{\ord_{c_v}w}N\log|j(P)|_v+ \log|\beta(c_v)|_v + 2B|G'|N \qquad\text{(we use~(\ref{everysimple}) and~(\ref{eordw}))}\\
\label{exioar}
& \le  -N^{-1}\log|j(P)|_v+ \log|\beta(c_v)|_v + 2B|G'|N .
\end{align}
For a non-archimedean ${v\in S_1}$ we have
\begin{equation}
\label{exionar}
\log\left|w(P)\right|_v\le N^{-1} \log|q_c(P)|_v+ \log|\beta(c)|_v= -N^{-1}\log|j(P)|_v+ \log|\beta(c_v)|_v.
\end{equation}

Next, we want to estimate $\sum_{v\in S_1}\log|\beta(c_v)|_v$. Recall that $\beta(c)$ is a product of at most $12B|G'|N $ numbers of the type ${1-e(a)}$, where~$a$ is a non-zero element of ${N^{-1}\Z/\Z}$. For such~$a$ we have
${1/N \le \bigl|1-e(a)\bigr|_v\le 2}$ if~$v$ is archimedean,
${p^{-1/(p-1)}\le \bigl|1-e(a)\bigr|_v\le 1}$  if~$v$ is non-archimedean and ${v\mid p\mid N}$, and ${\bigl|1-e(a)\bigr|_v=1}$ if~$v$ is non-archimedean and ${v(N)=0}$. 
It follows that 
\begin{equation}
\label{esumbet}
\sum_{v\in S_1}\log|\beta(c_v)|_v \le 12dB|G'|N \left(\log N+\calR\right)\le 24dB|G'|N \log N,
\end{equation}
where~$\calR$ is defined in~(\ref{ecalr}) and is estimated using~(\ref{ecalrest}). 

Thus, combining~(\ref{exioar}),~(\ref{exionar}) and~(\ref{esumbet}), we obtain 
\begin{align*}
\Xi_1&\le -N^{-1}\sum_{v\in S_1}d_v\log|j(P)|_v+\sum_{v\in S_1}\log|\beta(c_v)|_v+2dB|G'|N \\
&\le -N^{-1}\sum_{v\in S_1}d_v\log|j(P)|_v+27dB|G'|N \log N. 
\end{align*}
Further, since ${|j(P)|_v\le R_v'}$ for ${v\in S\setminus S_1}$, we have 
$$
\sum_{v\in S\setminus S_1}d_v\log|j(P)|_v \le \sum_{v\in M_K}d_v\log R_v'\le dN \left(\log 50+\calR\right)\le dN\log(50N),
$$
by~(\ref{ecalrest}), and we obtain 
\begin{align*}
\Xi_1
&\le -N^{-1}\sum_{v\in S}d_v\log|j(P)|_v+dN \bigl(27|G'|B\log N +\log(50N)\bigr)\\
&\le -N^{-1}\sum_{v\in S}d_v\log|j(P)|_v+ 32dB|G'|N \log N.
\end{align*}
Finally, since $j(P)$ is an $S$-integer, we have 
$$
\height(P)=\height(j(P))=d^{-1}\sum_{v\in S}d_v\log^+|j(P)|_v\ge d^{-1}\sum_{v\in S}d_v\log|j(P)|_v,
$$
and we obtain
\begin{equation}
\label{esone}
\Xi_1\le d\left(-N^{-1}\height(P)+ 31B|G'|N \log N\right).
\end{equation}

\subsection{Estimating $\Xi_2$, $\Xi_3$ and Completing the Proof}
Item~(\ref{icrw}) of Proposition~\ref{pw} implies that for an archimedean ${v\in S_2}$ 
$$
\log |w(P)|_v \le B|G'|N\left(\log \left(50^N+2200\right)+14\log N\right)\le 10B|G'|N^2,
$$
and for a non-archimedean ${v\in S_2}$ we have
${\bigl|\log |w(P)|_v\bigr|\le B|G'|N\log R_v+B|G'|\log R_v}$.
Using~(\ref{ecalrest}), we obtain
\begin{equation}
\label{estwo}
\Xi_2\le 10dB|G'|N^2 +  dB|G'|(N^2+N)\calR\le dB|G'|N^2(\calR+11). 
\end{equation}

Futher, let~$\lambda$ be from item~(\ref{ilamb}) of Proposition~\ref{pw}. Then
${\height(\lambda)\le 12B|G'|N\log2\le 9B|G'|N}$,
because ${\height(1-\zeta)\le \log2}$ for a root of unity~$\zeta$. 
For ${v\in S_3}$ the number $j(P)$ is a $v$-adic integer. Hence so is the number $\lambda w(P)$. It follows that ${|w(P)|_v\le |\lambda^{-1}|_v}$ for ${v\in S_3}$, and 
\begin{equation}
\label{exithr}
\Xi_3\le \sum_{v\in S_3}d_v\log \left|\lambda^{-1}\right|_v\le d\height\left(\lambda^{-1}\right)=d\height(\lambda) \le 9dB|G'|N.
\end{equation}
Combining this with~(\ref{esss}),~(\ref{esone}) and~(\ref{estwo}), we obtain
${\height(P)\le B|G'|N^3(\calR+30)}$,
which is~(\ref{etbo}) with $|G|$ replaced by~$|G'|$. \qed

\section{Proof of Theorem~\ref{th1}}
\label{sth1}

It is similar and simpler than that of Theorem~\ref{tbo}. In this case ${d=1}$ and~$S$ consists of the 
infinite place of~$\Q$, whence 
${s=1}$, ${B=1}$  and  ${\calR=0}$.
We may take as the Runge unit~$w$ one of the functions ${w_\bfa^{\pm1}}$. We may 
assume that ${S_1=S}$ and ${S_2=\varnothing}$; otherwise we would have the estimate 
${\log |j(P)|\le N\log 50}$, which is much sharper than~(\ref{eth1}). We denote by~$c$ 
the nearest cusp to~$P$ with respect to the infinite place. 

Thus, ${\Xi_1=\log|w(P)|}$, and~(\ref{exioar}) now reads 
${\log\left|w(P)\right| \le  -N^{-1}\log|j(P)|+ \log|\beta(c)| + 2|G'|N}$.
Estimating ${\bigl|\log|\beta(c)|\bigr|}$ by ${12|G'|N\log N}$, we obtain 
${\Xi_1\le -N^{-1}\log|j(P)|+ 15|G'|N\log N}$.
Further, ${\Xi_2=0}$, and~$\Xi_3$ can be estimated by ${9|G'|N}$, according 
to~(\ref{exithr}). Since ${\Xi_1+\Xi_3=0}$, we obtain~(\ref{eth1}) with $|G|$ replaced 
by~$|G'|$. \qed

\section{A Special Case}
\label{sspec}
When passing from~(\ref{ewple}) to~(\ref{exioar}), we used the trivial estimate 
${\ord_cw\ge 1}$. One can improve our results by using a more elaborate lower 
bound for $\ord_cw$. In this section we apply this approach to the case when~$G$ is the 
normalizer of a split torus of prime level, which is important for the subsequent applications. 
Of course, similar strategy can be used in many different cases as well.

We start by recalling definitions and notations that will be in force for the rest of the article. 
Recall that a Cartan subgroup of the algebraic group $\GL_2$ over some ring  is a maximal 
subtorus, which can be either (totally) split or nonsplit. More precisely, fix a prime 
number~$p$. The subgroup of diagonal matrices in $\GL_{2} 
(\Z_{p})$ is a split Cartan subgroup, whose normalizer consists in diagonal and antidiagonal 
matrices. Given an integer ${n\geq 0}$, the normalizer of a split Cartan subgroup is the 
image in $\GL_{2} (\Z /p^n \Z )$ of a group conjugate to the above subgroup of $\GL_{2} 
(\Z_{p} )$. If~$G$ is such a group mod $p^n$, we denote by $X_\spl(p^n )$ the 
corresponding modular curve over $\Q$, and by~$Y_\spl(p^n )$ its finite part. 
On the other hand, let $\Z_{p^2}$ be the ring 
of integers of the unramified quadratic 
extension of $\Q_{p}$. By making the group $\Z_{p^2}^\times$ act on $\Z_{p^2}$ by 
multiplication, the choice of a $\Z_{p}$-basis of $\Z_{p^2}$ defines an embedding of 
$\Z_{p^2}^\times$ into $\GL_{2} (\Z_{p})$. The image of such an embedding is by 
definition a 
non split Cartan subgroup of $\GL_{2} (\Z_{p})$; it has index 2 in its normalizer. For $n$ 
any positive integer, reduction mod $p^n$ similarly defines (normalizer of) nonsplit Cartan 
subgroups in $\GL_{2} (\Z /p^n\Z )$. Those subgroups define in the usual
way modular curves over $\Q$, which we denote by $X_{\mathrm{nonsplit}} (p^n )$.

Now we focus on the case where ${N=p}$ is an odd prime number, and we let~$G$ be the 
normalizer of a split Cartan subgroup of $\GL_2(\F_p)$.  

\begin{theorem}
\label{tspto}
For any ${P\in Y_\spl(p)(\Z)}$ we have 
${\log|j(P)|\le 23p\log p}$.
\end{theorem}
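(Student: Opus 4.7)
The plan is to repeat the proof of Theorem~\ref{th1} (Section~\ref{sth1}) almost verbatim, replacing only the trivial estimate $\ord_c(w)\ge 1$ used in passing from~(\ref{ewple}) to~(\ref{exioar}) by a much sharper lower bound of order~$p^3$, coming from an explicit Runge unit tailored to $X_\spl(p)$.

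For each cusp~$c$ of $X_\spl(p)$ I would construct a modular unit $w=w_c$ vanishing at~$c$ with $\ord_c(w)$ of order~$p^3$. A direct computation shows that the ramification index $e_c$ of the covering $X_\spl(p)\to X(1)$ equals~$p$ at every cusp, since a $\SL_2(\F_p)$-conjugate of $T^n = \bigl(\begin{smallmatrix}1&n\\0&1\end{smallmatrix}\bigr)$ fails to be diagonal or antidiagonal unless $p\mid n$. Therefore $\ord_c(w_c)/e_c$ is of order~$p^2$, replacing the factor $1/N=1/p$ in~(\ref{exioar}) by something of order~$p^2$. When $c$ is the "degenerate" cusp $c_\infty$ (corresponding to the orbit of $(1,0)$ in $(\F_p^2\setminus\{0\})/\pm 1$), the natural candidate is $w=w_{(1/p,0)}$ from Proposition~\ref{pwa}. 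Using $\ord_{c_\infty}(u_\bfa)=12p^2\ell_\bfa$ on $X(p)$, the explicit Bernoulli-sum identity
\begin{equation*}
\sum_{k=1}^{p-1}B_2(k/p)=-\tfrac{p-1}{6p},\qquad B_2(0)=\tfrac16,
\end{equation*}
and the fact that $\pi\colon X(p)\to X_\spl(p)$ is unramified at $c_\infty$ (the stabilizers of $i\infty$ in $\Gamma(p)$ and in $\Gamma_\spl(p)$ both equal $\langle -I,T^p\rangle$), one obtains a closed-form expression for $\ord_{c_\infty}(w_{(1/p,0)})$, of size of order~$p^3$ with an explicit positive leading coefficient. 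For the remaining $(p-1)/2$ "non-degenerate" cusps (indexed by $ab\in\F_p^*/\pm 1$), an analogous computation applies with a suitably chosen $\bfa$ in the corresponding $N(C_s)$-orbit.

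Carrying this sharper bound through the Runge machinery of Section~\ref{sproof} gives
\begin{equation*}
\Xi_1\ \le\ -(\text{const})\,p^2\log|j(P)|+\log|\beta(c)|+O(|G'|p),
\end{equation*}
the remaining ingredients being unchanged from Section~\ref{sth1}: $\Xi_2=0$ (since $s=1$), $\Xi_3\le 9|G'|p$ from the integrality of $\lambda w$, and $|\log|\beta(c)||\le 12|G'|p\log p$ from the estimate $|1-e(a)|\ge 1/p$ at the archimedean place. Since $|G'|$ is of order $p^2$, combining via the product formula $\Xi_1+\Xi_3=0$ yields $\log|j(P)|=O(p\log p)$; a careful bookkeeping of the numerical constants (from the explicit value of $\ord_{c_\infty}(w)$ via the Bernoulli sum, from the constant in $|\log|\beta||$, and from the bound on $\Xi_3$) produces the stated value~$23$ in $23\,p\log p$.

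The main obstacle is the uniform construction of $w_c$ over all cusps~$c$: while the computation at the degenerate cusp $c_\infty$ is transparent via the Bernoulli identities above, the $(p-1)/2$ non-degenerate cusps require a distinct explicit analysis, and one must verify that the estimate $\ord_c(w_c)\ \asymp\ p^3$ still holds there. A secondary difficulty is the sharp constant tracking needed to arrive at the explicit factor~$23$ rather than a generic $O(p\log p)$-bound.
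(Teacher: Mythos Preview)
Your plan is essentially the paper's proof; you have correctly identified all the ingredients (the Bernoulli identity, the ramification index $e_c=p$, that $w$ may be taken as a single $w_\bfa^{\pm1}$, and the bounds on $|\beta|$ and $\Xi_3$). The final bookkeeping $14p\log p + 9p\le 23p\log p$ is exactly how the constant $23$ arises.

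The one point where you are making life harder than necessary is your ``main obstacle''. You propose to build a \emph{different} unit $w_c$ for each cusp $c$ and worry that the $(p-1)/2$ non-degenerate cusps need a separate analysis. In the paper this disappears: Lemma~\ref{lord} shows that for \emph{every} $\bfa\in\bfA$ and \emph{every} cusp $c$ one has $|\ord_c w_\bfa|\ge p|G|=2p(p-1)^2$. The reason is that, after moving $c$ to $c_\infty$ by some $\sigma\in\GL_2(\F_p)$, one has $\ord_c w_\bfa=\ord_{c_\infty} w_{\bfa\sigma}$ (Proposition~\ref{pua}(\ref{iord}) together with the definition of $w_\bfa$), so it suffices to compute $\ord_{c_\infty} w_{\bfa'}$ for \emph{all} $\bfa'$. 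There are only two cases: if both coordinates of $\bfa'$ are nonzero, each nonzero element of $\F_p$ occurs $2(p-1)$ times as first coordinate of $\bfa'\sigma$ as $\sigma$ runs over $G$, giving $\ord_{c_\infty}w_{\bfa'}=-2p(p-1)^2$; if one coordinate vanishes, one gets $\ord_{c_\infty}w_{\bfa'}=p(p-1)^3$. Either way $|\ord_{c_\infty}w_{\bfa'}|\ge p|G|$. Hence one fixed $w_\bfa$ (taken with the appropriate sign) serves as the Runge unit regardless of which cusp is $v$-nearest to $P$, and no case-by-case construction is needed.
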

(Note that Theorem~\ref{th1} with ${N=p}$  gives the bound 
${\log|j(P)|\le60p^2(p-1)^2\log p}$.)

\medskip

The curve~$X_\spl(p)$ has ${(p+1)/2}$ cusps, among which one (the cusp at 
infinity~$c_\infty$) is defined over~$\Q$ and the other ${(p-1)/2}$ are conjugate 
over~$\Q$, so that we have exactly~$2$ Galois orbits of cusps over~$\Q$. The covering 
${X(p)\to X_\spl(p)}$ is unramified at the cusps, and the covering ${X_\spl(p)\to 
X(1)}$ has ramification~$p$ at every cusp. 

We shall use the following lemma.

\begin{lemma}
\label{lord}
Let~$G$ be the normalizer of a split Cartan subgroup of $\GL_2(\F_p)$ and~$c$ a cusp of 
${X_G=X_\spl(p)}$. Then for any ${\bfa \in \bfA}$ we have 
\begin{equation}
\label{elord}
\left|\ord_cw_\bfa\right|\ge 2p(p-1)^2=p|G|.
\end{equation}
\end{lemma}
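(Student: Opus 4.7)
The plan is to compute $\ord_{c_\infty}(w_\bfa)$ directly via the distribution relation for the second Bernoulli polynomial, and reduce the general cusp to this case by a conjugation symmetry.

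First, since $\GL_2(\F_p)$ acts transitively on the cusps of $X(p)$, any cusp $c$ of $X_\spl(p)$ lifts to a cusp $\tilde c = \gamma c_\infty$ of $X(p)$ for some $\gamma \in \GL_2(\F_p)$. The unramifiedness of $X(p) \to X_\spl(p)$ at the cusps combined with Proposition~\ref{pua}(c) yields
\[
\ord_c(w_\bfa) \;=\; \sum_{\sigma \in G'} \ord_{\tilde c}(u_{\bfa\sigma}) \;=\; 12 p^2 \sum_{\sigma \in G'} \ell_{\bfa \sigma \gamma^{-1}}.
\]
Substituting $\tau = \gamma\sigma\gamma^{-1}$ rewrites this sum as $12p^2\sum_{\tau\in\gamma G'\gamma^{-1}}\ell_{(\bfa\gamma^{-1})\tau}$, and $\gamma G'\gamma^{-1}$ is again the analogue of $G'$ for the normalizer of another split Cartan (the conjugate $\gamma C\gamma^{-1}$ of the original diagonal one). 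By this $\GL_2(\F_p)$-symmetry we may assume $c = c_\infty$ and $G$ is the normalizer of the diagonal Cartan, reducing the lemma to the statement $|\ord_{c_\infty}(w_\bfa)| \geq 2p(p-1)^2$ for every $\bfa \in \bfA$.

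Next, over $K = \Q$ the group $G'$ has order $(p-1)^2 = |G|/2$, since the covering $X(p) \to X_\spl(p)$ has Galois group $G/\{\pm I\}$. Writing $G = C \sqcup sC$ with $C$ the diagonal Cartan and $s$ any antidiagonal element, and using that $\ell_\bfb$ depends only on the first coordinate of $\bfb$ together with $\ell_{-\bfb} = \ell_\bfb$, we obtain
\[
\sum_{\sigma \in G'} \ell_{\bfa\sigma} \;=\; \tfrac{1}{2}\sum_{\sigma \in G}\ell_{\bfa\sigma} \;=\; \tfrac{p-1}{2}\left[\,\sum_{a\in\F_p^\times} B_2\bigl(\{aa_1\}\bigr) + \sum_{b\in\F_p^\times} B_2\bigl(\{ba_2\}\bigr)\,\right].
\]
The crucial input is the Bernoulli distribution identity $\sum_{i=0}^{p-1} B_2(i/p) = \tfrac{1}{p}B_2(0) = \tfrac{1}{6p}$, whence $\sum_{i=1}^{p-1} B_2(i/p) = -\tfrac{p-1}{6p}$. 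Thus each inner sum equals $-\tfrac{p-1}{6p}$ when the corresponding coordinate of $\bfa$ is a unit mod $p$, and $\tfrac{p-1}{6}$ when it vanishes.

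Finally, I check the two cases. If $a_1, a_2 \not\equiv 0 \pmod p$, the total collapses to $-\tfrac{(p-1)^3}{6p}$, and hence $\ord_{c_\infty}(w_\bfa) = -2p(p-1)^2$, matching the claimed bound tightly. If exactly one coordinate of $\bfa$ vanishes (say $a_1 = 0$, $a_2 \neq 0$), the sum equals $\tfrac{p-1}{2}\bigl[\tfrac{p-1}{6} - \tfrac{p-1}{6p}\bigr] = \tfrac{(p-1)^3}{12p}$, yielding $|\ord_{c_\infty}(w_\bfa)| = p(p-1)^3 \geq 2p(p-1)^2$ since $p \geq 3$. The main subtlety is tracking the $\pm I$ quotient correctly: the identity $\ell_{-\bfb} = \ell_\bfb$ combined with $|G'| = |G|/2$ is exactly what produces the sharp factor $p|G|$ in the generic case — the cancellation coming from the Bernoulli distribution relation leaves only $1/p$ of the naive $(p-1)/12$ per term, and the sum over $|G| = 2(p-1)^2$ elements then recovers the bound $2p(p-1)^2$.
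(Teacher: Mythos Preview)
Your approach is essentially the paper's: reduce to $c=c_\infty$ and $G$ the normalizer of the diagonal Cartan, compute $\sum_\sigma \ell_{\bfa\sigma}$ via the distribution identity $\sum_{k=1}^{p-1}B_2(k/p)=-(p-1)/(6p)$, and split into the cases $a_1a_2\ne 0$ versus one coordinate vanishing. Your final values $\ord_{c_\infty}w_\bfa=-2p(p-1)^2$ and $p(p-1)^3$ match the paper's exactly.

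There are, however, two compensating slips you should be aware of. In the paper's framework (see the isomorphism~(\ref{eisog})) one has $G'=G$, so $w_\bfa$ is a product over all of $G$ and the paper sums over $\sigma\in G$; your assertion $|G'|=|G|/2$ based on the geometric covering group is not what is used here. On the other hand, Proposition~\ref{pua}(\ref{iord}) requires $\ell_\bfa=\tfrac12 B_2(\{a_1\})$ (since $g_\bfa$ itself has $q$-order $\tfrac12 B_2(\{a_1\})$ and $u_\bfa=g_\bfa^{12N}$), whereas your displayed formula silently drops that $\tfrac12$. These two factors of $2$ cancel, which is why your numerics come out right; the paper's own definition of $\ell_\bfa$ in Subsection~\ref{ssnota} omits the $\tfrac12$ as well, so the confusion is understandable.

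There is also a pure arithmetic typo in the generic case: from your own displayed formula one gets
\[
\frac{p-1}{2}\left(-\frac{p-1}{6p}-\frac{p-1}{6p}\right)=-\frac{(p-1)^2}{6p},
\]
not $-\tfrac{(p-1)^3}{6p}$; it is the former that gives $12p^2\cdot\bigl(-\tfrac{(p-1)^2}{6p}\bigr)=-2p(p-1)^2$, as you correctly state.
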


\begin{proof}
The proof relies on the identity
${\sum_{k=1}^{N-1}B_2(k/N)
=-(N-1)/6N}$,
where ${B_2(T)}$ is the second Bernoulli polynomial. 
We may assume that~$G$ is the normalizer of the diagonal subgroup of $\GL_2(\F_p)$. Also, replacing~$\bfa$ by $\bfa \sigma$ with a suitable ${\sigma\in \GL_2(\F_p)}$, we may assume that ${c=c_\infty}$. Since the covering ${X(p)\to X_G}$ is unramified at the cusps, we have, according to item~(\ref{iord}) of Proposition~\ref{pua},
${\ord_{c_\infty}w_\bfa= \sum_{\sigma\in G}\ord_{c_\infty}u_{\bfa \sigma}= 12p^2\sum_{\sigma\in G}\ell_{\bfa \sigma}}$.
Now we have two cases. If the entries of ${\bfa=(a_1,a_2)}$ are non-zero, then every non-zero element of~$\F_p$ occurs exactly ${2(p-1)}$ times as the first coordinate of $\bfa \sigma$, when~$\sigma$ runs over~$G$ (and~$0$ does not occur at all). Hence, by the definition of~$\ell_\bfa$, we have 
$$
\ord_{c_\infty}w_\bfa=  12p^2\cdot 2(p-1)\sum_{k=1}^{p-1} \frac12 B_2\left(\frac kp\right)= -2p(p-1)^2 =-|G|p,
$$
because  ${|G|=2(p-1)^2}$. And if either~$a_1$ or~$a_2$ is~$0$, then each non-zero element occurs exactly ${p-1}$ times, while~$0$ occurs ${(p-1)^2}$ times. Hence 
$$
\ord_{c_\infty}w_\bfa=  12p^2\left((p-1)\sum_{k=1}^{p-1} \frac12 B_2\left(\frac kp\right)+ (p-1)^2\cdot\frac12 B_2(0)\right)= p(p-1)^3=\frac12|G|p(p-1).
$$
Since ${p\ge 3}$, we have~(\ref{elord}) in any case. \qed
\end{proof}

\paragraph{Proof of Theorem~\ref{tspto}}
We argue as in the proof of Theorem~\ref{th1}. In particular, 
we again have~$w$ of the form $w_\bfa^{\pm1}$. Further,  ${\Xi_1=\log|w(P)|}$ and ${\Xi_2=0}$. But now, instead of~(\ref{exioar}) we  use~(\ref{ewple}), which gives
$$
\log\left|w(P)\right| \le  -\frac{\ord_cw}p\log|j(P)|+ \log|\beta(c)| + 2|G|p \le -\frac{\ord_cw}p\log|j(P)|+14|G|p\log p
$$
(we estimate ${\bigl|\log|\beta(c)|\bigr|}$ by ${12|G|p\log p}$). 
Hence ${\Xi_1\le |G|\left(-\log|j(P)|+ 14p\log p\right)}$ due to Lem\-ma~\ref{lord}.
We again use~(\ref{exithr}), which gives ${\Xi_3\le9|G|p}$. Since ${\Xi_1+\Xi_3=0}$, the result follows. \qed

\section{Split Cartan Structures in the Torsion of Elliptic Curves}
\label{sspli}

In this section we prove Theorems~\ref{tsp} and~\ref{ttwo}. 
We need several auxiliary results.  First of all, recall the results of Masser, W\"ustholz and  
Pellarin on the isogenies of elliptic curves.  Masser and W\"ustholz obtained an explicit 
upper bound for the degree of the minimal isogeny between two isogenous elliptic 
curves~\cite{MW90} and abelian varieties~\cite{MW93a}; see also~\cite{MW93}. 
Pellarin~\cite{Pe01} obtained a totally explicit version of this result for the case of elliptic 
curves. We use the result of Pellarin in the following form, which is a direct combination of 
Th\'eor\`eme~2 from~\cite{Pe01} and inequality~(51) on page~240 of~\cite{Pe01}.

\begin{proposition}
\label{ppel}
{\bf (Masser-W\"ustholz, Pellarin).} Let~$E$ be an elliptic curve defined over a number 
field~$K$ of degree~$d$. Let~$E'$ be another elliptic curve, defined over~$K$ and isogenous
to~$E$. Then there exists an isogeny  ${\psi:E\to E'}$ of degree
\begin{equation}
\label{edegis}
\deg\psi\le 10^{82}d^4\max\{1,\log d\}^2\left(1+\height(j_E)\right)^2.
\end{equation}
\end{proposition}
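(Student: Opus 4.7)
The plan is to invoke, rather than reprove, Pellarin's quantitative refinement of the Masser--W\"ustholz isogeny theorem. Concretely, I would assemble the statement from the two ingredients in~\cite{Pe01} that the authors single out.

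The first ingredient is Th\'eor\`eme~2 of~\cite{Pe01}, which already gives an explicit upper bound for the minimal degree of an isogeny $\psi\colon E\to E'$ between two isogenous elliptic curves over a number field $K$ of degree~$d$, expressed however in terms of~$d$ and the stable Faltings height $h_F(E)$ (rather than $\height(j_E)$). The method goes back to Masser--W\"ustholz: one views the periods of $E$ and $E'$ as data to which a quantitative form of the analytic subgroup theorem applies, and extracts from the period lattice of the ambient abelian surface an algebraic subgroup of controlled degree whose existence translates into the desired isogeny bound. Taking this as a black box, there is nothing to do at this step.

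The second ingredient is inequality~(51) on page~240 of~\cite{Pe01}, which is an explicit comparison between $h_F(E)$ and $\height(j_E)$, of the shape $h_F(E)\le c_1\height(j_E)+c_2\log(1+\height(j_E))+c_3$ with numerical constants. Such comparisons are classical (Silverman, Faltings): the archimedean contribution is controlled via Jensen-type estimates on the theta series defining $\Delta$, while the non-archimedean contribution is handled on the minimal Weierstrass model. Again, Pellarin has made the constants fully explicit, so nothing new is required.

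To finish, I would substitute the second estimate into the first and collect all constants, checking that they are absorbed by the single factor $10^{82}d^4\max\{1,\log d\}^2\left(1+\height(j_E)\right)^2$ in~(\ref{edegis}). The only potential difficulty is arithmetic bookkeeping to verify that the logarithmic correction in the Faltings--to--Weil comparison is swallowed by the $(\log d)^2$ and $(1+\height(j_E))^2$ factors; the genuine work (the explicit transcendence step) is entirely Pellarin's and I would not attempt to redo it.
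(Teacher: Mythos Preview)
Your proposal is correct and matches the paper exactly: the paper gives no proof at all, stating only that the proposition is ``a direct combination of Th\'eor\`eme~2 from~\cite{Pe01} and inequality~(51) on page~240 of~\cite{Pe01}'', which is precisely the two-step assembly you outline. There is nothing to add.
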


\begin{corollary}
\label{cpel}
Let~$E$ be a non-CM elliptic curve  defined over a number field~$K$ of degree~$d$, and 
admitting a cyclic isogeny over~$K$. Then the degree of this isogeny is bounded by the 
right-hand side of~(\ref{edegis}).
\end{corollary}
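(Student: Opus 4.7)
The plan is to combine Proposition~\ref{ppel} with the observation that, on a non-CM curve, any cyclic isogeny to a fixed target $E'$ automatically realizes the minimal possible degree among isogenies $E\to E'$. Thus an upper bound for the degree of \emph{some} isogeny $E\to E'$ will immediately bound the degree of our given cyclic isogeny.

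First I would set up the module-theoretic picture. Let $\phi\colon E\to E'$ be a cyclic $K$-rational isogeny of degree $n$. Since $E$ has no complex multiplication, $\mathrm{End}_{\bar K}(E)=\Z$, so $\Hom_{\bar K}(E,E')$ is a finitely generated torsion-free $\Z$-module whose rationalization $\Hom_{\bar K}(E,E')\otimes\Q$ is one-dimensional over $\Q$; it is therefore free of rank one, generated by an isogeny $\phi_0$ of minimal degree.

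The main step is to check that $\phi=\pm\phi_0$, so that $n=\deg\phi_0$ equals the minimum of $\deg\psi$ over all isogenies $\psi\colon E\to E'$. Writing $\phi=[k]\circ\phi_0$ for a unique integer $k\ge 1$, one has $E[k]\subseteq\ker\phi$. But for $k\ge 2$ the group $E[k]\cong(\Z/k\Z)^2$ is not cyclic, which contradicts the hypothesis that $\ker\phi$ is cyclic. Hence $k=1$, $\phi=\pm\phi_0$, and $n=\deg\phi_0$ is indeed the minimal degree.

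It then suffices to invoke Proposition~\ref{ppel} applied to $E$ and $E'$: it produces an isogeny $\psi\colon E\to E'$ of degree at most the right-hand side of~(\ref{edegis}). Minimality of $n$ gives $n\le\deg\psi$, which concludes the argument. There is essentially no obstacle here beyond identifying cyclic isogenies with generators of $\Hom_{\bar K}(E,E')$; this is the only point at which the non-CM hypothesis is actually used, and it is the sole reason for needing to quote Proposition~\ref{ppel} rather than simply reading off a bound on a specified $\phi$.
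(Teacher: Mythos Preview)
Your proof is correct and follows essentially the same idea as the paper's: the non-CM hypothesis forces $\Hom_{\bar K}(E,E')$ to be free of rank~$1$, so a cyclic isogeny is a generator and hence has minimal degree, which is then bounded via Proposition~\ref{ppel}. The paper phrases the same fact slightly differently, composing the isogeny $\psi$ from Proposition~\ref{ppel} (taken WLOG cyclic) with the dual $\phi^D$ to land in $\mathrm{End}(E)=\Z$ and conclude $\phi=\pm\psi$; your explicit module-theoretic argument is a mild unpacking of this step.
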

\begin{proof}
Let $\phi$ be a cyclic isogeny from $E$ to $E'$, and let $\phi^D \colon E' \to E$ be the 
dual isogeny. Let $\psi \colon E\to E'$ be a isogeny of degree bounded by the right-hand 
side of~(\ref{edegis}) which, without loss of generality, may be assumed to be cyclic. As
$E$ has no CM, the composed map $\phi^D \circ \psi$ must be multiplication by some 
integer, so that $\phi =\pm \psi$. \qed
\end{proof}

\medskip

Next we establish several simple properties of twists of elliptic curves.  
\begin{lemma}
\label{ltwist}
Let~$E$ be an elliptic curve over~$\Q$ with $j$-invariant ${j_{E}\ne 0,1728}$. Then there 
is a twist~$E'$ of~$E$ over~$\Q$, such that if $\ell\geq 5$ is a prime number dividing the 
conductor of~$E'$  then ${\ord_\ell\left(j_E(j_E-1728)\right)\ne0}$.
\end{lemma}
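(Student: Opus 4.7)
The plan is to set
\[
T := \bigl\{\ell \geq 5 : \ell \mid N_E \text{ and } \ord_\ell\bigl(j_E(j_E-1728)\bigr) = 0\bigr\},\qquad d := \prod_{\ell \in T}\ell,
\]
and to show that the quadratic twist $E' := E^d$ (for which $j_{E'} = j_E$) satisfies the required property. Since $T$ is exactly the finite set of primes $\ell\geq 5$ at which the conclusion already fails for $E$, the two things to verify are (i) that the twist by $d$ endows $E'$ with good reduction at each $\ell \in T$, and (ii) that no new offending prime is created.

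For (i), the key local input is a Kodaira-type dichotomy. At each $\ell \in T$, the hypotheses $\ell \geq 5$, $v_\ell(j_E) = 0$, $v_\ell(j_E - 1728) = 0$, and bad reduction of $E$ together force the Kodaira type at $\ell$ to be $I_0^*$. Indeed, a quick inspection of the standard table of invariants $(v_\ell(c_4), v_\ell(c_6), v_\ell(\Delta_{\min}))$ shows that $I_n\ (n\geq 1)$, $II$, $IV$, $I_n^*\ (n\geq 1)$, $IV^*$, $II^*$ all force $v_\ell(j_E) \neq 0$, while $III$ and $III^*$ force $v_\ell(j_E - 1728) > 0$. Thus $v_\ell(c_4)=2$, $v_\ell(c_6)=3$, $v_\ell(\Delta_{\min})=6$, and a minimal Weierstrass model over $\Z_\ell$ takes the form $y^2 = x^3 + \ell^2 A'\,x + \ell^3 B'$ with $A', B' \in \Z_\ell^\times$ and $4A'^3 + 27B'^2 \in \Z_\ell^\times$.

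Given this, a direct rescaling argument finishes the first point. Since $v_\ell(d) = 1$, the ratio $d/\ell$ is a unit in $\Z_\ell^\times$, and the substitution $(x,y) = (\ell^2 X, \ell^3 Y)$ applied to the twisted model $y^2 = x^3 + \ell^2 A' d^2\,x + \ell^3 B' d^3$ yields
\[
Y^2 = X^3 + A'(d/\ell)^2\,X + B'(d/\ell)^3,
\]
an integral Weierstrass equation over $\Z_\ell$ with discriminant $-16(d/\ell)^6(4A'^3 + 27B'^2)$, which is a unit. Hence $E'$ has good reduction at every $\ell \in T$.

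For (ii), let $\ell \geq 5$ be any prime with $\ell \mid N_{E'}$. By the previous paragraph, $\ell \notin T$, hence $\ell \nmid d$, and the twist $E \to E'$ is unramified at $\ell$; since unramified quadratic twists preserve the reduction type (good, multiplicative, additive), one has $\ell \mid N_E$. Combined with $\ell \notin T$ and the definition of $T$, this forces $\ord_\ell(j_E(j_E-1728)) \neq 0$, which is exactly the conclusion. The only non-routine step is the Kodaira dichotomy in the middle paragraph, where the hypothesis $\ell \geq 5$ is essential; the rescaling computation and the unramified-twist-preserves-reduction fact are standard.
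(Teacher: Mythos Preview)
Your proof is correct, but it takes a genuinely different route from the paper.

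The paper simply writes down the ``universal'' Weierstrass equation
\[
y^2 + xy = x^3 - \frac{36}{j_E-1728}\,x - \frac{1}{j_E-1728},
\]
observes that its discriminant is $j_E^2/(j_E-1728)^3$ and its $j$-invariant is~$j_E$, and notes that for any prime $\ell\ge5$ with $\ord_\ell\bigl(j_E(j_E-1728)\bigr)=0$ this equation is already smooth over $\Z_\ell$. Thus the desired twist is produced in one stroke, with no reference to Kodaira types or Tate's algorithm.

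Your argument, by contrast, first diagnoses each offending prime $\ell\in T$ as having Kodaira type $I_0^*$ (via the table of $(v(c_4),v(c_6),v(\Delta))$, which is where $\ell\ge5$ is really used), then removes it by the appropriate ramified quadratic twist, and finally checks that no new bad prime is introduced because the remaining twist is unramified there. This is longer and relies on more machinery (Tate's algorithm, the effect of twists on reduction types), but it has the virtue of being constructive in a different sense: it tells you exactly \emph{which} twist of the given model to take, and it makes transparent the mechanism---namely that the only obstruction at such primes is an $I_0^*$ fibre, which a ramified quadratic twist always kills. The paper's approach buys brevity and self-containment; yours buys a clearer picture of what is happening locally and would adapt more readily if one wanted to keep track of the twist explicitly.
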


\begin{proof}
Consider the Weierstrass equation 
\begin{equation}
\label{ewei}
y^2 +xy =x^3 -\frac{36}{j_{E} -1728} x -
\frac1{j_{E} -1728} .
\end{equation}
It is known to have discriminant ${j_{E}^2 /(j_{E}-1728)^3}$, and that the elliptic 
curve~$E'$ it defines over~$\Q$ has $j$-invariant equal to $j_{E}$. It follows that~$E'$ is a 
twist of~$E$ over~$\Q$ and as $j\neq 0,1728$, this twist is necessarily quadratic. For a prime 
${\ell\ge 5}$ such that ${\ord_\ell\left(j_E(j_E-1728)\right)=0}$, 
equation~(\ref{ewei}) defines a smooth model for~$E'$ over~$\Z_{\ell}$. Therefore the 
minimal Weierstrass equation for~$E'$ over~$\Z$ defines a scheme which is smooth 
over~$\Z_\ell$, which means that~$\ell$ does not divide the conductor of~$E'$.\qed
\end{proof}

\medskip

We now fix a prime power~$p^n$. Let~$G$ be a subgroup of ${\GL (E[p^n])\simeq
\GL_{2} (\Z /p^n \Z )}$. We say that an elliptic curve~$E$ defined over~$\Q$ is 
\textsl{endowed with a $G$-level structure} if the image of the natural Galois 
representation $\rho_{E,p^n}\colon \gal (\bar{\Q} /\Q )\to \GL (E[p^n])$ is conjugate 
to~$G$. 

\begin{lemma} 
\label{lglev}
Assume that~$G$ contains $\pm 1$. Let~$E$ be an elliptic curve over~$\Q$ with ${j_E\ne 0, 
1728}$,  endowed with a $G$-level structure. Then any twist of~$E$ over~$\Q$ is endowed 
with a $G$-level structure as well.
\end{lemma}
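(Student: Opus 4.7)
The plan is to use the classification of twists for an elliptic curve with trivial extra automorphisms, and then the very simple behavior of the mod $p^n$ Galois representation under such twists, leveraging the centrality of $\pm 1$.

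Since $j_E\ne 0,1728$, we have $\mathrm{Aut}_{\bar\Q}(E)=\{\pm1\}$, so the set of twists of~$E$ over~$\Q$ is classified by ${H^1(\gal(\bar\Q/\Q),\{\pm1\})=\Q^\times/(\Q^\times)^2}$; every such twist is a quadratic twist~$E^{(d)}$, cut out by the quadratic character ${\chi_d:\gal(\bar\Q/\Q)\to\{\pm1\}}$ associated with~$\Q(\sqrt d)/\Q$. First I would recall (or derive from the cocycle defining~$E^{(d)}$) the standard formula
\begin{equation*}
\rho_{E^{(d)},p^n}(\sigma)=\chi_d(\sigma)\cdot\rho_{E,p^n}(\sigma)\qquad(\sigma\in\gal(\bar\Q/\Q)),
\end{equation*}
where the two representations are identified via any $\bar\Q$-isomorphism ${\phi:E\to E^{(d)}}$ (whose Galois $1$-coboundary is exactly~$\chi_d$, valued in $\mathrm{Aut}_{\bar\Q}(E)=\{\pm1\}$).

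Next, by hypothesis there exists ${h\in\GL_2(\Z/p^n\Z)}$ such that ${\mathrm{Im}(\rho_{E,p^n})}$ is contained in $hGh^{-1}$ (this is the content of having a $G$-level structure in the usage relevant to all the applications in this paper, cf.\ the discussion of Serre's question in the introduction). Since ${\pm1\in G}$ and~$\pm1$ is central in $\GL_2(\Z/p^n\Z)$, we have ${\{\pm1\}\subset hGh^{-1}}$, whence
\begin{equation*}
\mathrm{Im}(\rho_{E^{(d)},p^n})\subseteq\{\pm1\}\cdot\mathrm{Im}(\rho_{E,p^n})\subseteq\{\pm1\}\cdot hGh^{-1}=hGh^{-1}.
\end{equation*}
Thus~$E^{(d)}$ inherits a $G$-level structure, as required.

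There is essentially no obstacle here; the only delicate point is the justification of the twist formula for~$\rho_{E^{(d)},p^n}$, which follows immediately from the explicit cocycle description of quadratic twists once one uses ${\mathrm{Aut}_{\bar\Q}(E)=\{\pm1\}}$ (guaranteed by ${j_E\ne0,1728}$). Everything else is a one-line group-theoretic consequence of the centrality of~$\pm1$ and of the hypothesis ${\pm1\in G}$.
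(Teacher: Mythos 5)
Your proof is correct and is essentially the paper's own argument: the paper likewise observes that every twist (necessarily quadratic since ${j_E\ne 0,1728}$) has representation ${\rho_{E}\otimes\chi}$ with~$\chi$ valued in ${\{\pm1\}\subset G}$, so the level structure is preserved. Your explicit remarks on the cocycle description of quadratic twists and on the centrality of $\pm1$ only make precise what the paper leaves implicit, including the reading of ``$G$-level structure'' actually used in the applications.
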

\begin{proof} 
If~$E'$ is the twist ${E\otimes \chi}$ of~$E$ by a character~$\chi$, and $\rho_{E}$ 
is the Galois representation associated to the $p$-adic Tate module of~$E$, then the similar 
object $\rho_{E' }$ for~$E'$ is the tensor product ${\rho_{E}\otimes \chi}$. 
Since~$\chi$ has values in ${\{\pm1\}\subset G}$, the curve~$E$ is endowed with a
$G$-level structure if and only if the same is true for $E'$.  \qed
\end{proof}

\medskip
The following proposition is instrumental in the proof of Theorem~\ref{tsp}.
\begin{proposition}
\label{pubo} 
There exists an absolute effective constants~$\kappa$ such that the following holds. 
Let~$E$ be a non-CM elliptic curve over~$\Q$, endowed with a structure of normalizer of 
split Cartan subgroup in level~$p^n$. Then  
\begin{equation}
\label{enogrh}
p^n\le \kappa \left(1+\height(j_E)\right)^2.
\end{equation}
Assuming~GRH, we also have  
\begin{equation}
\label{ewithgrh}
p^n\le \kappa \log (N_E )(\log\log (2N_E ))^6,
\end{equation}
where~$N_E$ is the conductor of~$E$.
\end{proposition}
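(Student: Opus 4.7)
The plan is to exploit that the normalizer $N_s$ of a split Cartan subgroup $C_s$ of $\GL_2(\Z/p^n\Z)$ contains $C_s$ with index~$2$. Denote by~$H$ the image of $\rho_{E,p^n}$, conjugated so as to sit inside~$N_s$, and set ${H_0=H\cap C_s}$. Then ${[H:H_0]\le 2}$, so $H_0$ is cut out by a field extension ${K/\Q}$ with ${[K:\Q]\le 2}$. Over~$K$ the representation takes values in the diagonal part, whence ${E[p^n]=C_1\oplus C_2}$ is a $\gal(\bar K/K)$-stable decomposition into two cyclic subgroups of order~$p^n$. In particular ${E\to E/C_1}$ is a cyclic $K$-rational isogeny of degree~$p^n$.

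For the unconditional bound~(\ref{enogrh}), I would apply Corollary~\ref{cpel} to $E$ over~$K$. Since ${[K:\Q]\le 2}$, the factor $[K:\Q]^4\max\{1,\log[K:\Q]\}^2$ in~(\ref{edegis}) is at most~$16$, yielding
\[
p^n\le 16\cdot10^{82}\bigl(1+\height(j_E)\bigr)^2,
\]
where we used that the absolute logarithmic height of~$j_E$ is invariant under base change. This settles~(\ref{enogrh}) with an explicit~$\kappa$.

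For~(\ref{ewithgrh}), the idea is to detect~$p^n$ through Frobenius traces. The quotient ${H/H_0}$ produces a quadratic character ${\chi:\gal(\bar\Q/\Q)\to\{\pm1\}}$ whose conductor is supported on primes dividing ${pN_E}$, and the splitting field~$M$ of $\rho_{E,p^n}$ is ramified only at those primes. For any rational prime ${\ell\nmid pN_E}$ whose Frobenius in~$H$ lies in ${H\setminus H_0}$, the matrix $\rho_{E,p^n}(\mathrm{Frob}_\ell)$ is antidiagonal, hence has trace~$0$ modulo~$p^n$. Consequently ${a_\ell(E)\equiv 0\pmod{p^n}}$, so by Hasse either ${a_\ell(E)=0}$ (i.e.\ $\ell$ is supersingular for~$E$) or ${p^n\le |a_\ell(E)|\le 2\sqrt\ell}$. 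Under GRH, the effective Chebotarev theorem of Lagarias--Odlyzko in its sharpened Serre form produces such~$\ell$ of size $O\bigl((\log\Delta_M)^2(\log\log\Delta_M)^c\bigr)$, and a standard estimate bounds ${\log\Delta_M\ll n\log p+\log N_E}$; combined with the unconditional bound already obtained (which controls $n\log p$ in terms of $\log N_E$), and with the elementary upper bound on the number of supersingular primes below~$X$ (used to dodge the case ${a_\ell=0}$), one arrives at~(\ref{ewithgrh}).

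The unconditional half is essentially mechanical once the cyclic $p^n$-isogeny over the (at most quadratic) base~$K$ has been exhibited. The main obstacle is the GRH half: one must simultaneously (i)~control the conductor and discriminant of the representation field~$M$ in terms of~$N_E$ alone, (ii)~apply effective Chebotarev to the specific non-trivial coset ${H\setminus H_0}$ rather than a full conjugacy class, and (iii)~bypass the supersingular primes at which ${a_\ell=0}$, which is what forces the extra $(\log\log 2N_E)^6$ factor into the final estimate.
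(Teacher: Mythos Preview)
Your unconditional argument for~(\ref{enogrh}) is correct and is exactly what the paper does: over the (at most quadratic) field~$K$ cut out by~$\chi$, the curve~$E$ acquires a cyclic $p^n$-isogeny, and Corollary~\ref{cpel} with ${d\le 2}$ gives the bound.

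The GRH argument, however, has genuine gaps. First, your claimed estimate ${\log\Delta_M\ll n\log p+\log N_E}$ for the splitting field~$M$ of~$\rho_{E,p^n}$ is far too optimistic: the degree ${[M:\Q]}$ can be as large as ${|N_s|\asymp p^{2n}}$, and any standard bound (Hensel, conductor--discriminant) gives ${\log\Delta_M}$ of order ${[M:\Q]\cdot(\log p+\log N_E)}$, not ${\log p+\log N_E}$. Feeding this into Lagarias--Odlyzko yields ${\ell\ll p^{4n}(\log N_E)^2}$, and ${p^n\le 2\sqrt\ell}$ then gives nothing. Second, your proposed cure---``the unconditional bound controls $n\log p$ in terms of $\log N_E$''---is not justified: (\ref{enogrh}) bounds $p^n$ by ${(1+\height(j_E))^2}$, and there is no general inequality ${\height(j_E)\ll(\log N_E)^c}$ (that would essentially be Szpiro). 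Third, even if you retreat to applying Chebotarev to the quadratic field~$K$ rather than~$M$, you must still avoid the supersingular primes where ${a_\ell=0}$; the ``elementary upper bound'' you allude to has implicit constants depending on~$E$, and making this effective in terms of~$N_E$ alone is not automatic.

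The paper sidesteps all three issues with a single device due to Halberstadt--Kraus: let ${E'=E\otimes\chi}$ be the quadratic twist. By~\cite{HK98} one has ${N_{E'}=N_E}$. For ${p>163}$, Mazur's isogeny theorem forces ${\chi\ne 1}$; since~$E$ is non-CM, $E$ and~$E'$ are not $\Q$-isogenous, so ${a_\ell\ne a_\ell'}$ for infinitely many~$\ell$. Serre's Th\'eor\`eme~21 in~\cite{Se81} (effective Chebotarev tailored to comparing two $\ell$-adic systems, with bound depending only on the \emph{conductors}) then supplies, under GRH, such a prime with ${\ell\le c(\log N_E)^2(\log\log 2N_E)^{12}}$. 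From ${a_\ell'=\chi(\ell)a_\ell}$ and ${a_\ell\ne a_\ell'}$ one gets simultaneously ${a_\ell\ne 0}$ and ${\chi(\ell)=-1}$, hence ${\rho_{E,p^n}(\mathrm{Frob}_\ell)\in G\setminus G_0}$ has trace~$0$, so ${p^n\mid a_\ell}$ and ${p^n\le 2\sqrt\ell}$. The twist trick thus delivers the right discriminant scale \emph{and} disposes of the ${a_\ell=0}$ obstruction in one stroke.
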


\begin{proof}
By the assumption, ${\rho_{E,p^n}\left(\gal(\bar\Q/\Q)\right)}$ is contained in the 
normalizer~$G$ of a split torus ${G_0\le \GL\left(E[p^n]\right)}$. Let~$\chi$ 
be the quadratic character of $\gal(\bar\Q /\Q)$ defined by $G/G_0$, and let~$K$ be the 
corresponding number field (which is at most quadratic over~$\Q$). Then ${\rho_{E,p^n}
\left(\gal(\bar\Q/K)\right)}$ is contained in the split torus~$G_0$, which  implies 
that~$E$ admits a cyclic isogeny of degree~$p^n$ over~$K$ (and even two distinct cyclic 
isogenies). This implies~(\ref{enogrh}) by Corollary~\ref{cpel}.  

   Now let us assume GRH and prove~(\ref{ewithgrh}). We apply the argument of 
Halberstadt and Kraus~\cite{HK98}, which makes use of Serre's explicit version of the 
Chebotarev theorem~\cite{Se81}. Let~$E'$ be the twist of~$E$ by~$\chi$. The 
conductors of~$E'$ and~$E$ are equal by \cite[Th\'eor\`eme 1]{HK98}. For any prime 
number~$\ell$ not dividing $pN_E$, the traces~$a_\ell$ and~$a_\ell'$ of a Frobenius 
substitution ${\mathrm{Frob}}_\ell$ acting on the $p$-adic Tate modules of~$E$ and~$E'$ 
satisfy~$a_\ell=a_\ell'\;\chi(\ell)$. The curve $E$ being endowed with a $K$-rational 
isogeny of degree $p$, it follows from Mazur's theorem on rational isogenies 
\cite[Theorem 1]{Ma78} that, if ${p>163}$, we have ${K\ne\Q}$ and, consequently, 
${\chi\ne 1}$. (One can replace here $163$ by $37$, because, as Mazur indicates in the introduction of~\cite{Ma78}, all curves with rational isogenies of order exceeding $37$ have complex multiplication.) Since~$E$ has no complex multiplication, $E$ and $E'$ are not 
$\Q$-isogenous, so we have ${a_\ell\ne a_\ell'}$ for infinitely many~$\ell$. 
Th\'eor\`eme~21 from~\cite{Se81} implies that, assuming GRH, one finds such~$\ell$ 
satisfying 
${\ell\leq c (\log N_E )^2 
(\log \log 2N_E )^{12}}$.
Since ${a_\ell\ne a_\ell'}$, we have  ${a_\ell\neq 0}$ and ${\chi(\ell)=-1}$, which 
means that $\rho_{E,p^n}\left(\mathrm{Frob}_\ell\right)$ belongs to ${G\setminus 
G_0}$. Since all elements from ${G\setminus G_0}$ have trace~$0$, we obtain ${p^n\mid 
a_\ell}$. Now Hasse's bounds imply that ${p^n \le |a_\ell |\le 2\sqrt\ell}$, which
yields~(\ref{ewithgrh}). \qed
\end{proof}

\paragraph{Proof of Theorem~\ref{tsp}} Assume that $X_{\mathrm{split}} (p^n )(\Q )$ 
has a non-CM and non-cuspidal point~$P$. We want to show that, for sufficiently large~$p$, 
we have ${n\le 2}$, and even ${n\le 1}$ assuming GRH. 

\begin{sloppypar}
Our point~$P$ gives rise to a non-CM elliptic curve~$E$. Since~$P$ induces a point in 
$X_{\mathrm{split}} (p)(\Q )$,  the results of Momose and Merel 
\cite[Theorem 3.1]{Me05} imply that either $p\le13$ or ${j(P)=j_{E}}$ belongs to~$\Z$. 
Now Theorem~\ref{tspto} yields  
\begin{equation}
\label{elogj}
\log|j_E|\le 23p\log p,
\end{equation} 
which,  together with~(\ref{enogrh})   gives  ${p^n\le c(p\log p)^2}$ for some constant~$c$ (since ${j_E\in \Z}$ we have 
${\height(j_E)=\log|j_E|}$). Hence 
${n\le 2}$ for sufficiently large~$p$,  proving the first (unconditional) statement. 
\end{sloppypar}

Now let us prove the second statement. Lemmas~\ref{ltwist} and~\ref{lglev} allow us to 
assume (replacing~$E$, if necessary, by a quadratic twist) that every prime ${\ell\geq 5}$
dividing $N_{E}$, divides either~$j_{E}$ or ${j_{E}-1728}$. The curve~$E$ has potential 
good reduction at all primes, so ${\ord_\ell (N_{E})=2}$, and the exponents of 
the conductor at~$2$ and~$3$ are at most~$8$ and~$5$ respectively. Therefore 
${N_E\le 2^8\cdot3^5\cdot j_{E}^2(j_{E}-1728)^2}$. 
Combining this with~(\ref{ewithgrh}) and~(\ref{elogj}), we obtain, assuming GRH, that
${p^n\le cp(\log p)^7}$ for some constant~$c$. Therefore ${n\le 1}$ for sufficiently 
large~$p$. \qed

\paragraph{Proof of Theorem~\ref{ttwo}}
The proof is very similar to that of the first part of Theorem~\ref{tsp}. Let~$p$, $q$ 
and~$r$ be distinct split Cartan deficient primes for a non-CM elliptic curve $E/\Q$. We 
assume ${11\le p<q<r}$. Again applying the results of Momose and Merel, we obtain ${j_E\in 
\Z}$. Hence~$E$ gives rise to a point on  $Y_{\mathrm{split}}(p)(\Z)$, and 
Theorem~\ref{tspto} yields~(\ref{elogj}).

On the other hand, over some quadratic field~$E$ admits a cyclic isogeny of degree~$p$, and 
the same is true for~$q$ and~$r$. Hence over some field of degree (at most)~$8$ the 
curve~$E$ admits a cyclic isogeny of degree $pqr$. Using Corollary~\ref{cpel} 
and~(\ref{elogj}) we  obtain 
${p^3\le pqr\le c(p\log p)^2}$,
which is impossible when~$p$ exceeds certain~$p_0$. \qed

{\footnotesize

\end{document}